\documentclass[a4paper,reqno, 11pt]{amsart}%{article}
\pdfoutput=1
\usepackage[T1]{fontenc}
\usepackage{lmodern}
\usepackage[utf8]{inputenc}

\usepackage[left=2.8cm,right=2.8cm,top=1.6cm,bottom=2cm]{geometry}

\usepackage{amsmath}
\usepackage{amsfonts}
\usepackage{amssymb}
\usepackage{amsthm}
\usepackage{mathtools}
\usepackage{mathrsfs}
\usepackage{graphicx}
\usepackage{pgf,tikz}
\usetikzlibrary{arrows}
\usepackage{stmaryrd}
\usepackage{eqnarray}
\usepackage{csquotes}
\usepackage{relsize}
\usepackage{dsfont}
\usepackage{appendix}
\usepackage{soul}
\usepackage{bbm}
\usepackage{enumerate}
\usepackage{upgreek}
\usepackage{yhmath}

\definecolor{purple}{cmyk}{0.75,0.90,0,0}
\definecolor{DB}{rgb}{0.07,0.0,0.5}
\definecolor{DG}{rgb}{0.0,0.37,0.07}
\definecolor{DR}{rgb}{0.37,0,0.07}

\usepackage[
pdftitle={},%
pdfauthor={}, hyperindex=true, colorlinks=true, urlcolor={DR},
linkcolor={DR}, menucolor={DR}, citecolor={DR},
anchorcolor={DR},linktoc=all,pagebackref,final]{hyperref}

\makeatletter
\newtheorem*{rep@theorem}{\rep@title}
\newcommand{\newreptheorem}[2]{%
\newenvironment{rep#1}[1]{%
 \def\rep@title{#2 \ref{##1}}%
 \begin{rep@theorem}}%
 {\end{rep@theorem}}}
\makeatother

\newtheorem{theorem}{Theorem}[section]
\newreptheorem{theorem}{Theorem}
\newtheorem{coro}{Corollary}
\newtheorem{lemma}[theorem]{Lemma}
\newtheorem{proposition}[theorem]{Proposition}
\newreptheorem{proposition}{Proposition}
\newtheorem*{proposition*}{Proposition}

\newtheorem{claim}[theorem]{Claim}
\newtheorem{defi}{Definition}

\theoremstyle{remark}
\newtheorem{remark}[theorem]{Remark}

\theoremstyle{remark}
\newtheorem*{remark*}{Remark}

\numberwithin{equation}{section}

\theoremstyle{definition}

\theoremstyle{definition}

\newcommand{\fk}{\mathrm{FK}}
\newcommand{\is}{\mathrm{Is}}
\newcommand{\en}{\mathcal{E}}
\newcommand{\ber}[1]{\mathrm{Ber}_{#1}}
\newcommand{\cov}{\mathrm{Cov}}

\newcommand{\edag}{\en^\dagger}

\newcommand{\A}{\rotatebox[origin = c]{180}{$\forall$}}
\newcommand{\F}{{\mathcal F}}

\newcommand{\V}{{\mathcal V}}

\renewcommand{\P}{\mathbb P}
\newcommand{\R}{\mathbb R}

\newcommand{\1}{\mathbf 1}

\newcommand{\E}{\mathbb E}

\newcommand{\domi}{\preccurlyeq}

\def\rm{\reversemarginpar}

\title[Stochastic dominations for FK percolation and the Ising Model]{Stochastic dominations for FK percolation and sharp thinning thresholds for the Ising energy field}
\author{Paul Cahen}
\address{Universit\'e Claude Bernard Lyon 1, CNRS UMR 5208, Institut Camille Jordan, 69622 Villeurbanne, France}\email{cahen@math.univ-lyon1.fr}
\author{Avelio Sepúlveda}
\address{Universidad de Chile,  Centro de Modelamiento Matemático (AFB170001), UMI-CNRS 2807, Beauchef 851, Santiago, Chile.}\email{lsepulveda@dim.uchile.cl}

\begin{document}

\maketitle

\begin{abstract}
	At first glance, one would imagine that the energy field of the Ising model, the set of edges whose endpoints share the same spin, is stochastically monotone as a function of the coupling constants. However, this is not generally the case. In this paper, we introduce two weaker notions of stochastic domination that make this result true: $p$--weak and $p$--weak$^\dagger$ domination. Both of these notions depend on a parameter $p$ and we find the optimal values $p$ and $p^\dagger$ so that these dominations hold.
	
	One of the key ingredients in obtaining some of the results is a new stochastic domination relating FK percolations with different parameters $q,\tilde{q}\geq 1$ that is of independent interest.
	%\par\avelio{Along the way, we develop a parallel theory for the FKG property: we introduce some weakening of it and describe precisely which ones are satisfied in general for the energy graph of Ising.} 
\end{abstract}

\section{Introduction}
The Ising model has become a fundamental model in probability theory. For a given finite graph $G=(V,E)$ and coupling constants $J\in (\mathbb{R}^+)^{E}$, the \emph{(free) Ising model on $G$} is the probability distribution $\P_J$ on $\{\pm1\}^V$ satisfying
\begin{align*}
	\mathbb{P}_J(\upsigma)\propto e^{- H_J(\xi(\upsigma))}
\end{align*}
for any spin configuration $\upsigma: V\to \{\pm 1\}$, where the \emph{Ising energy field} $\xi(\upsigma)$ is the percolation configuration defined by $(\xi(\upsigma))_{xy} = \mathbf{1}_{\upsigma(x)=\upsigma(y)}$ for each edge $(xy)\in E$. Its energy is
\begin{align*}
	H_J(\xi) = -2\sum_{e \in E} J_{e} \xi_e. 
\end{align*}
In this paper, we will only deal with free boundary conditions.

Since $H_{J}(\xi)$ is minimized when $\xi$ is everywhere equal to $1$ (the fully aligned state), one would naturally expect that the law of $\xi(\sigma)$ stochastically increases in $J$. This expectation is correct at the level of one-edge marginals by the monotonicity of the spin correlations, but is false for the joint law. There are finite graphs for which increasing the coupling constants makes some increasing event in the energy field less likely, a phenomenon that persists even when $J$ is constant and increases across all edges \cite{haggstrom1996note}. The energy field also need not satisfy the FKG property (also known as positive association). Thus the failure is not a defect of the usual proof methods: it is a genuine obstruction to treating the full energy field as a monotone percolation model. In fact, as recently summarized by Klausen \cite{klausen2022monotonicity}, many natural observables of the Ising model fail to exhibit expected monotonicity and positive association. This issue is usually circumvented by the use of the monotonicity of the spin correlations, the FKG property of the spins, or by restricting the set of observables of the energy field as developed in \cite{cammarota1993stochastic}, which gives a generalized criterion for the observables that are indeed increasing in $J$.

The stochastic-order question for Ising spins has a complementary infinite-volume history. Liggett and Steif showed that plus states at distinct inverse temperatures are never stochastically comparable on $\mathbb Z^d$, whereas on regular trees they are ordered throughout the low-temperature regime \cite{liggett2006stochastic}. Later, Ray and Spinka proved that on every bounded-degree amenable graph, plus states at distinct inverse temperatures are incomparable, while on nonamenable graphs they are invariantly ordered at sufficiently low temperatures \cite[Theorem~1.3]{RS}. These results concern the vertex-spin field and the geometry of the underlying infinite graph.

The point of view of this paper is to measure the failure of monotonicity of the energy field rather than to bypass it. Given two laws $\mu$ and $\nu$ on edge configurations, suppose that the expected domination $\mu\domi\nu$ is false. We ask whether it
becomes true after applying the same independent thinning to both configurations: is there a \emph{strictly positive} parameter $p$ such that \begin{align*}
\mu\cap\ber{p}\domi \nu\cap\ber{p},
\end{align*}
a property that we call \emph{weak domination}, and what is the largest $p$ for which this holds? We also introduce an intermediate notion, weak$^\dagger$ domination, which requires weak domination for the set of open edges, and weak domination in the reverse direction for the set of closed edges: it is therefore a property which is symmetric upon taking the dual measure. Lying between classical stochastic domination and weak domination, it raises as well the question of the optimal parameter $p$ at which this weak$^\dagger$ domination holds. In both cases, the parameter $p$ is therefore not merely a technical loss: it measures how much monotonicity remains in a field whose full law is not monotone.

For the Ising energy field these questions have two exact answers. The first threshold is $p(J)=1-e^{-2J}$. At this value, the Edwards--Sokal coupling identifies $\en_J\cap\ber{p(J)}$ with the FK--Ising random-cluster model, which explains why this threshold is natural rather than accidental. It is the largest thinning parameter for which weak monotonicity and weak FKG hold uniformly over finite graphs. The second threshold is $p^\dagger(J)=1-e^{-4J}$. It appears when one asks for a version of the same statement that is also stable under complements, namely weak$^\dagger$ domination and weak$^\dagger$ FKG. The corresponding field $\edag_J$ has an abstract dual high-temperature expansion, see \eqref{eq formula for edag}, and is closely related to the dual percolation structures arising in the double-random-current couplings of \cite{AHL}. From the exact relations that it satisfies, see for example \eqref{eq formula edag general}, and the fact that it arises as a natural threshold, it seems natural to expect other interesting connections to the many models related to the Ising model, see \cite{hansen2025generalcouplingisingmodels} for a framework where these models are connected.

\medskip Parallel to our Ising results, we establish some (surprisingly unknown) stochastic dominations concerning FK percolation, a model prominent in probability theory also known as the random-cluster model. While FK percolation is a standard tool for studying Potts and Ising models on $\mathbb{Z}^d$ and other lattices (see \cite{grimmett2006random} for a comprehensive treatment), many of its core algebraic properties hold for arbitrary finite graphs and it is often studied over general graphs, see, for example, \cite{BGJ} for a detailed analysis of the FK model on the complete graph. To the best of our knowledge, our main FK result is the first to provide a stochastic inequality that ``mixes'' different $q$ parameters in an exact way, except for the well-known monotonicity in the $q$--parameter. Note that Grimmett obtained strict improvements of this monotonicity on lattices, which allowed him to prove strict monotonicity in $q$ of the critical point $p_c(q)$ of the $\fk$ percolation \cite{grimmett1995comparison}. As a special case of our inequality, we re-obtain a ``strong'' version of the monotonicity in $p$ (see \eqref{eq.strong_dom} below), which quantifies how much stochastically larger $\fk_{\tilde{p},q}$ is compared to $\fk_{p,q}$ for parameters $\tilde{p}>p$. In particular, it shows that the domination $\fk_{p,q}\domi\fk_{\tilde p,q}$ is both $\epsilon$-upwards and $\epsilon$-downwards-movable with an explicit $\epsilon=\epsilon(p,\tilde p,q)>0$, in the sense of \cite{broman2006refinements,broman2006dynamical}. The concept of movability was introduced to show the non-existence of exceptional times for two dynamical models outside criticality, and we note that, together with our work, it shows that dynamical FK percolation does not have exceptional time in the sub- and super-critical phases, a fact that to or knowledge was not noted before. 

However, during the preparation of the paper we discovered that the upward and downward movabilities for FK percolation were already present in the litterature: see Lemma 9.1 in \cite{lelli2024mixing} (or Lemma 2.5 in \cite{severo2024slab} for a less precise version). 

\medskip
The results of this paper therefore sit between two standard approaches. On the one hand, classical correlation inequalities such as FKG \cite{FKG,holley1974remarks} and the Ahlswede-Daykin four-function theorem \cite{ahlswede1978inequality}, GKS inequalities \cite{Griffiths,KS}, and GHS \cite{GHS} give robust monotonicity statements for spins, correlations, and related observables. On the other hand, modern coupling methods compare dependent fields with Bernoulli-type percolations, often relying on \cite{LSS} (see, e.g., \cite{dario-garban} in the context of the XY model or \cite{deijfen2024geometric} in the context of random geometry for applications of \cite{LSS}). Other methods have been fruitfully invented as well when \cite{LSS} does not apply directly, see e.g. \cite{BFO} or \cite{martineau2025stochastic}.

Here the full edge-energy field is kept, but the order relation is relaxed by an explicit independent thinning. The main results identify the exact amount of thinning needed, and the counterexamples show that these thresholds cannot be improved.

A particular case of this new stochastic domination is used to prove some of the results about weak monotonicity of the energy field of the Ising model, thus relating the two parts of the paper. We start by explaining our results on FK percolation.

\subsection{Strong monotonicity on FK percolation}
Let $G=(V,E)$ be a finite graph. We start by defining the FK percolation on $G$, introduced in \cite{FK-perco}, which generalizes Bernoulli percolation and depends on an additional global parameter $q>0$. For this, we view a percolation configuration $\omega\in\{0,1\}^E$ equivalently as a vector indexed by $E$ or as the subset of open edges $\{e\in E : \omega_e=1\}$. Letting $k(\omega)$ denote the number of connected components in the subgraph induced by $\omega$, the FK percolation model is defined as the probability measure on $\{0,1\}^E$ given by
\begin{align*}
	\fk_{p,q}(\omega) \propto q^{k(\omega)}\prod_{e\in E}p_e^{\omega_e}(1-p_e)^{1-\omega_e}.
\end{align*}
In this paper, we restrict our attention to the regime $q\ge 1$; for $q<1$, the model's behaviour changes fundamentally, and the results proven here no longer hold. Furthermore, for the special case $q=1$, we write $\fk_{p,1}=\ber{p}$ for Bernoulli percolation.

For $q\ge 1$, it is a well-known result that $\fk_{p,q}$ is stochastically increasing in the parameter $p$ and stochastically decreasing in the parameter $q$ (see, e.g., Theorem 3.21 in \cite{grimmett2006random}). Our first result generalizes and strengthens both of these monotonicities.

\begin{theorem}\label{main theorem introduction}
	Let $q\ge 1$ and let $p, \tilde{p} \in [0,1]^E$ be parameters such that $p \leq \tilde{p}$ pointwise. We have the following stochastic domination:
	\begin{align}\label{eq.strong_dom}
		& \fk_{p,q} \domi \fk_{\tilde{p},q} \cap \ber{p/\tilde{p}}, \qquad\qquad\qquad \text{\emph{(upwards-movability)}}
	\end{align}
	where, for two percolation measures $\mu$ and $\nu$, $\mu\cap\nu$ denotes the law of $\omega\cap\omega'$ for independent configurations $\omega\sim\mu$ and $\omega'\sim\nu$. The ratio $p/\tilde{p}$ is understood edgewise.
	
	More generally, for any real numbers $q, \tilde{q} \ge 1$ and parameters $p, \tilde{p} \in [0,1]^E$, we have
	\begin{align}
		\label{eq.mon_Intersection} \fk_{p\tilde{p}, q\tilde{q}} &\domi \fk_{p,q} \cap \fk_{\tilde{p}, \tilde{q}}, \qquad \text{and}\\
		\label{eq.mon_Union}  \fk_{\widehat{p}, q\tilde{q}} &\succcurlyeq \fk_{p,q} \cup \fk_{\tilde{p}, \tilde{q}},
	\end{align}
	where
	\begin{align}\label{eq. p hat **}
		\widehat{p}:=(p^*\tilde{p}^*)^* = \frac{p\tilde p+p\tilde{q}(1-\tilde{p})+\tilde{p}q(1-p)}{(1-p)(1-\tilde{p})+p\tilde p+p\tilde{q}(1-\tilde{p})+\tilde{p}q(1-p)}.
	\end{align}
	(The reason for the notation $(p^*\tilde{p}^*)^*$ will be made clear in Remark \ref{rk. union equivalent intersection}.)
\end{theorem}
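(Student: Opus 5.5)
The plan is to prove \eqref{eq.mon_Intersection} and \eqref{eq.mon_Union} by a monotone coupling of heat-bath (single-edge Glauber) dynamics, and then read off \eqref{eq.strong_dom} as a special case. The key point is that the pair $(\omega,\omega')\sim \fk_{p,q}\otimes\fk_{\tilde p,\tilde q}$ is itself a Markov random field. Its single-edge conditional law factorizes: given everything off $e$, $\omega_e$ and $\omega'_e$ are independent, with
\[
P(\omega_e=1\mid\cdot)=p_e
\]
if the endpoints of $e$ are joined in $\omega\setminus e$, and
\[
P(\omega_e=1\mid\cdot)=\frac{p_e}{p_e+q(1-p_e)}
\]
otherwise; the same holds for $\omega'_e$ with $(\tilde p,\tilde q)$. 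So $\fk_{p,q}\otimes\fk_{\tilde p,\tilde q}$ is stationary for the chain that picks an edge uniformly and resamples $(\omega_e,\omega'_e)$ from this product conditional law.

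\textbf{Intersection.} Let $\eta\sim \fk_{p\tilde p,q\tilde q}$ and run its heat-bath chain using the same edge choices and a common uniform $U$. I will show that the invariant $\eta\subset\omega\cap\omega'$ is preserved. For this it suffices to check that whenever $\eta\subset\omega\cap\omega'$ off $e$, the probability that $\eta_e=1$ is at most the probability that $\omega_e=\omega'_e=1$. We then set $\eta_e=1$ iff $U$ is below the first probability, and set the pair to $(1,1)$ iff $U$ is below the second, distributing the remaining mass arbitrarily but correctly among $(1,0),(0,1),(0,0)$.

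There are two cases.
\begin{itemize}
\item If the endpoints of $e$ are connected in $\eta$, they are connected in both $\omega$ and $\omega'$, and both probabilities equal $p_e\tilde p_e$.
\item Otherwise, the pair probability is at least
\[
\frac{p}{p+q(1-p)}\cdot\frac{\tilde p}{\tilde p+\tilde q(1-\tilde p)},
\]
and we need this to be at least $\dfrac{p\tilde p}{p\tilde p+q\tilde q(1-p\tilde p)}$. This reduces to
\[
q\tilde q\,(1-p\tilde p)-\bigl(p\tilde q(1-\tilde p)+\tilde p q(1-p)+q\tilde q(1-p)(1-\tilde p)\bigr)=\tilde q p(1-\tilde p)(q-1)+q\tilde p(1-p)(\tilde q-1)\ge 0 .
\]
This is exactly where $q,\tilde q\ge1$ is used.
\end{itemize}
Starting both chains from the empty configuration, the invariant holds at all times. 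The chains are irreducible and aperiodic on finite state spaces (if some $p_e\in\{0,1\}$, those edges are simply frozen), so letting time go to infinity yields a coupling with $\eta\subset\omega\cap\omega'$.

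\textbf{Union.} The argument is symmetric, with the invariant $\eta\supset\omega\cup\omega'$: start from the full configuration and compare the probabilities of closing $e$. If the endpoints are disconnected in $\eta$ off $e$, they are disconnected in $\omega$ and $\omega'$ as well. In that case the product of closing probabilities is
\[
\frac{q\tilde q(1-p)(1-\tilde p)}{(p+q(1-p))(\tilde p+\tilde q(1-\tilde p))}.
\]
Expanding the denominator shows that this coincides exactly with $\eta$'s closing probability $q\tilde q(1-\widehat p)/(\widehat p+q\tilde q(1-\widehat p))$ for $\widehat p$ as in \eqref{eq. p hat **}; this is what dictates the formula for $\widehat p$. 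If the endpoints are connected in $\eta$, then $\eta$ closes $e$ with probability
\[
1-\widehat p=\frac{(1-p)(1-\tilde p)}{D},
\]
where $D\ge1$ is the denominator in \eqref{eq. p hat **}. Every possible closing probability for the pair is at least $(1-p)(1-\tilde p)$, so the needed inequality holds again.

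\textbf{Upwards-movability.} Finally, \eqref{eq.strong_dom} follows from \eqref{eq.mon_Intersection} applied with parameters $(\tilde p,q)$ and $(p/\tilde p,1)$, since $\fk_{p/\tilde p,1}=\ber{p/\tilde p}$ and the product parameters are $(p,q)$.

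The main obstacle is the case analysis in the coupling step. One must make sure that the comparison only needs the worst case, that is, the pair conditional is monotone in the pair's connectivity while $\eta$'s connectivity implies theirs, and that the algebra for $\widehat p$ closes exactly. I expect the rest to be routine.
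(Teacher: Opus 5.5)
Your proof is correct and follows essentially the paper's route. The paper proves \eqref{eq.mon_Intersection} and \eqref{eq.mon_Union} via Holley's inequality, using exactly your one-edge comparisons: the identity $\tilde q p(1-\tilde p)(q-1)+q\tilde p(1-p)(\tilde q-1)\ge0$, the exact match of closing probabilities that defines $\widehat p$, and the bound $1-\widehat p\le(1-p)(1-\tilde p)$. It also obtains \eqref{eq.strong_dom} from the case $\tilde q=1$, as you do.

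The only difference is where the coupling runs. You build the Holley-type coupling directly on the lifted pair $(\omega,\omega')$, whose single-edge conditionals are explicit, rather than applying Holley's theorem as a black box to the projected measure $\fk_{p,q}\cap\fk_{\tilde p,\tilde q}$. This spares you two steps of the paper: bounding that projected measure's conditional one-point function by conditioning the pair on its intersection, and reducing to $\xi=\xi'$ via monotonicity of $\fk_{p\tilde p,q\tilde q}(e\mid\cdot)$.
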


Although the proof of this theorem relies only on a previously known argument, Holley's inequality (see Theorem \ref{theorem holley} below), the result was not discovered before. As mentionned before, the case $\tilde q=1$ had actually appeared in \cite{lelli2024mixing,severo2024slab}, in a slightly less general form.
%A weaker form of \eqref{eq.mon_Union} for the specific case of $\tilde{q}=1$ was recently obtained by Severo in the context of slab percolation for the Ising model (see Lemma 2.5 in \cite{severo2024slab}), using instead the Russo formula for FK percolation.

Furthermore, as discussed in Remark \ref{r.tight_ineq}, the bounds provided by Theorem \ref{main theorem introduction} are tight for all $q\ge 1$ in both the high-density $p\nearrow 1$ and low-density $p\searrow 0$ regimes.

\subsection{Weak monotonicities on the energy field}
For a spin configuration $\sigma\in\{\pm1\}^V$, we are primarily interested in the percolation configuration $\xi=\xi(\sigma)$, defined as the set of edges $e=(xy)$ such that $\sigma_x=\sigma_y$. More precisely, 
\begin{align*}
	\xi := \left\{e=(xy)\in E : \sigma_x=\sigma_y\right\} \quad \text{or equivalently} \quad \xi_{xy} := \mathbf{1}_{\sigma(x)=\sigma(y)} \quad \forall x\sim y.
\end{align*} 
When $\sigma$ has the law of an Ising model with coupling constants $J$, we say that $\xi(\sigma)\sim \en_J$ has the law of an Ising energy field on $G$ with coupling constants $J$. The energy field is a commonly studied observable of the Ising model itself; modern works have focused on planar domains at critical or near-critical parameters \cite{HS,izyurov2024energy,garban2025energy}. Let us mention as well \cite{ray2022finitary} that shows that the super-critical energy field is a ffiid (see their work for an introduction to ffiid processes). In our paper, we are more interested in the behaviour on a general graph at any temperature.

The relationship between the energy field and FK percolation is given by the Edwards–Sokal coupling (see \cite{edwards1988generalization}, or Theorem 1.13(b) in \cite{grimmett2006random} for a more modern approach). In our notation it says that

\begin{align}\label{equation edwards sokal}
	\fk_{p,2} = \en_J\cap\ber{p},
\end{align}
whenever $p_e=p(J)_e=1-e^{-2J_e}$ for every edge $e\in E$. Taking $q = 2$ in Theorem \ref{main theorem introduction} therefore gives the first weak monotonicity theorem for the Ising energy field.

\begin{theorem}[Weak stochastic monotonicity for the Ising energy field]\label{theorem weak stochastic monotonicity for Ising introduction}
	Let $J\leq \tilde{J}$ be (edge-dependent) coupling constants, where the inequality holds pointwise for every edge. Then, for any parameter $p \le 1-\exp(-2J)$,
	\begin{align}\label{eq.weak_dom}
		\en_{J} \cap \ber{p} \domi \en_{\tilde{J}} \cap \ber{p}.
	\end{align}
	Moreover, one cannot take $p>1-\exp(-2J)$ in general.
\end{theorem}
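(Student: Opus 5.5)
The positive statement will follow from the Edwards--Sokal identity \eqref{equation edwards sokal} combined with the upwards-movability \eqref{eq.strong_dom} of Theorem \ref{main theorem introduction} at $q=2$. Set $p_0:=1-e^{-2J}$ and $p_1:=1-e^{-2\tilde J}$ edgewise. Since $J\le\tilde J$, we have $p_0\le p_1$. Then
\begin{align*}
\en_J\cap\ber{p_0}=\fk_{p_0,2}\domi \fk_{p_1,2}\cap\ber{p_0/p_1}=\en_{\tilde J}\cap\ber{p_1}\cap\ber{p_0/p_1}=\en_{\tilde J}\cap\ber{p_0}.
\end{align*}
Here we used that an intersection of independent Bernoulli fields is Bernoulli with the product parameters. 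For a general $p\le p_0$, write $\ber{p}=\ber{p_0}\cap\ber{p/p_0}$. It then suffices to observe that stochastic domination is preserved under intersection with an independent configuration $\omega'$. Indeed, take a monotone coupling $\omega_1\le\omega_2$ and an independent $\omega'\sim\ber{p/p_0}$; then $\omega_1\cap\omega'\le\omega_2\cap\omega'$. This proves \eqref{eq.weak_dom}.

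For sharpness, I will exhibit, for every $p$ exceeding $1-e^{-2J}$ on some edge, a small graph, couplings $J\le\tilde J$, and an increasing event whose probability drops. The natural mechanism is the parity constraint on cycles: on any cycle, the number of edges of $\xi$ that are closed is even. My first try is a triangle with edges $e,f,g$. The couplings on $f,g$ are kept fixed, and $J_e$ is raised to $\tilde J_e$, possibly $\tilde J_e=\infty$, which is allowed in the limit. The test event is $A=\{f\text{ or } g \text{ open after thinning}\}$. When $e$ is closed, exactly one of $f,g$ is closed, so $A$ occurs as soon as the thinning keeps the open one. When $e$ is open, $f$ and $g$ are either both open or both closed, which is worse for $A$. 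Thus raising $J_e$ pushes toward the ``both equal'' regime. I will compute $\P(A)$ explicitly as a function of $p_f,p_g$ and the couplings, using the fact that on a tree-like graph the edge spins $\xi$ are independent with $\P(\xi_e=1)=e^{2J_e}/(1+e^{2J_e})$, conditioned on the parity constraint. I will then compare $J_e$ with $\tilde J_e$.

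The main obstacle is that the threshold must be \emph{exactly} $1-e^{-2J}$, and a single triangle may only give non-optimal counterexamples. If so, I would enlarge the cycle, or replace the edges $f,g$ by series or parallel networks of edges, which produce effective couplings. I would also tune which edge carries the thinning parameter $p>1-e^{-2J}$. The goal is to make the first-order expansion of $\P_{\tilde J}(A)-\P_J(A)$ in $\tilde J_e-J_e$ have sign governed by $p-(1-e^{-2J})$. A useful guide is the Edwards--Sokal picture: at $p=p(J)$ the thinned field is exactly FK, which is monotone. The derivative computation should therefore vanish precisely at the threshold and become negative just above it for a suitable event.
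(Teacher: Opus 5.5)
Your proof of \eqref{eq.weak_dom} is correct and coincides with the paper's: Edwards--Sokal at $p(J)=1-e^{-2J}$, then upwards-movability \eqref{eq.strong_dom} with $q=2$, then a common extra thinning by $\ber{p/p(J)}$.

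The sharpness half, however, is only a search plan, and its guiding heuristic is wrong. Your triangle can be computed exactly. With $q=e^{-2J}$ and $A=\{\omega_f=1\}\cup\{\omega_g=1\}$,
\[
\en_{J,p}(A)=\frac{p_f+p_g-p_fp_g+q_e(p_fq_g+p_gq_f)}{1+q_fq_g+q_e(q_f+q_g)},
\]
which is nondecreasing in $J_e$ unless $p_fq_f(1-q_g^2-p_g)+p_gq_g(1-q_f^2-p_f)<0$. For common $p,q$ this requires $p>1-q^2=1-e^{-4J}$, well above $p(J)$. More fundamentally, on a fixed graph you should not expect the first-order term in $\tilde J_e-J_e$ to change sign at $p(J)$ itself. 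There the thinned field is $\fk_{p(J),2}$, so $\partial_{J_e}\en_{J,p}(A)=\frac{2}{p_e}\cov_{J,p}(\1_A,\omega_e)$ (Lemma \ref{lemma derivative in Je}) is $\ge 0$ by FKG. In non-degenerate cases such as the triangle it is strictly positive, hence still positive for $p$ slightly above $p(J)$. The sharp threshold is reached only along graphs whose size grows as $p\downarrow p(J)$. The paper's counterexample needs $n>p/(p-p(J))$ edges, and the remark after Theorem \ref{theorem weak monotonicity and FKG for energy field} explicitly leaves open whether each fixed graph tolerates a neighbourhood above $p(J)$.

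What you are missing is a way to compute on long cycles. Your event is actually the $n=2$ member of the right family: on $C_{n+1}$ with edges $0,\dots,n$, take $A=\{\sum_{k=1}^n\omega_k\ge n-1\}$ and vary $J_0$.

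The paper's key tool is the signed decomposition of Lemma \ref{l.decomposition en_J in sum of bernoulli}: $\en_{J,p}=\alpha\,\ber{p/(1+q)}+\beta\,\ber{p/(1-q)}$ with $\alpha,\beta>0$ and $\alpha+\beta=1$. Its second term has parameter $r=p/(1-q)>1$ precisely when $p>p(J)$. The law of total covariance, valid for signed measures of mass one, gives $\cov_{J,p}(\1_A,\omega_0)=\alpha\beta\frac{2p_0q_0}{1-q_0^2}\big(\ber{r}(A)-\ber{p/(1+q)}(A)\big)$. Moreover $\ber{r}(A)=\prod_{k=1}^n r_k\big[1-\sum_{k=1}^n\frac{p_k-p(J)_k}{p_k}\big]$, which is negative as soon as $\sum_{k=1}^n(p_k-p(J)_k)/p_k>1$. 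Lemma \ref{lemma derivative in Je} then converts this negative covariance into a strict decrease of $\en_{J,p}(A)$ in $J_0$, and also into a failure of FKG.

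Note that the excess $p-p(J)$ must accumulate over many edges here. Your stated target, a counterexample whenever $p$ exceeds $1-e^{-2J}$ on some single edge, is therefore stronger than what the paper proves.
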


We refer to inequalities of the form \eqref{eq.weak_dom} as \emph{weak domination}; the formal definition and basic properties are given in Section \ref{sss.weak_stochastic_domination}. We are primarily concerned with the optimal parameter $p$ that can be chosen in the above inequality. As stated in the last sentence of Theorem \ref{theorem weak stochastic monotonicity for Ising introduction}, if one requires this inequality to hold in general, the maximum possible parameter $p$ for which the domination holds is $1-e^{-2J}$ (for a precise and rigorous formulation of this optimality, see Theorem \ref{theorem weak monotonicity and FKG for energy field} and Remark \ref{remark what does the assumption above p(J) mean?}). The domination \eqref{eq.weak_dom} itself, however, swiftly follows from the Edwards--Sokal coupling and Theorem \ref{main theorem introduction}:

\begin{proof}[Proof of \eqref{eq.weak_dom}]
	For $p=p(J)$, the left-hand side is exactly $\fk_{p(J),2}$ by equation \eqref{equation edwards sokal}. Similarly, by the Edwards--Sokal coupling, the right-hand side can be decomposed as \begin{align*}
		\en_{\tilde{J}}\cap\ber{p(\tilde{J})}\cap\ber{p(J)/p(\tilde{J})} = \fk_{p(\tilde{J}),2}\cap\ber{p(J)/p(\tilde{J})}.
	\end{align*} The stochastic domination is then a direct application of \eqref{eq.strong_dom}. The fact that it holds for smaller values of $p$ follows trivially by intersecting both sides with a further independent Bernoulli percolation of parameter $p/p(J)$. 
\end{proof}

Note that Theorem \ref{main theorem introduction} is essential to obtain \eqref{eq.weak_dom}: the classical stochastic monotonicity of $\fk_{p,2}$ in the parameter $p$ would instead rewrite as a stochastic domination where the two sides are intersected with Bernoulli percolations of \emph{different} parameters. Our strengthening of the FK-percolation monotonicity enables the use of the \emph{same parameter} $p$ on both sides.

\begin{remark}
	A completely analogous result to \eqref{eq.weak_dom} holds true for the $q$--Potts model for any integer $q\ge 2$, where the definition of the energy field and the proof are analogous to the case $q=2$. However, in this paper we concentrate on the Ising model, as our subsequent results are based on techniques developed specifically for it. Moreover, the energy field is especially interesting for the Ising model but far less so for larger values of $q$, since on a connected graph the spin configuration (up to a global flip) is measurable with respect to its energy field, a fact that is not true for the Potts model when $q\ge 3$.
\end{remark}

Theorem \ref{theorem weak stochastic monotonicity for Ising introduction} opens the door to a general study of weakened stochastic orders on probability measures on $\{0,1\}^E$. Weak domination asks for stochastic domination after a common independent thinning by a positive parameter. This turns out to be slightly stronger than the inequalities $\mu(A)\le\nu(A)$ for $A$ ranging over a specific subset of increasing events. See Section \ref{section general theory of domination} and Proposition \ref{proposition weak stochastic domination}. 

One may then define an in-between notion of stochastic domination. We say that \emph{$\mu$ is weakly$^\dagger$ dominated by $\nu$}, written $\mu\domi^{\dagger} \nu$, if at the same time $\mu$ is weakly stochastically dominated by $\nu$ (i.e. for some parameter $p>0$ the domination $\mu\cap\ber{p}\domi\nu\cap\ber{p}$ holds), and for some parameter $q<1$ the domination $\mu\cup\ber{q}\domi\nu\cup\ber{q}$ holds as well. In Section \ref{sss.weak_dagger_stochastic_domination} we define again this notion and we give a dual interpretation of the domination $\mu\cup\ber q\domi\nu\cup\ber q$.

Although in this paper we apply the notions of weak and weak$^\dagger$ domination only to the energy field, the general framework developed in Section \ref{section general theory of domination} is one of the model-independent components of the paper. It isolates a way of quantifying how much stochastic order remains after classical domination has failed: weak domination compares the two measures after a common Bernoulli thinning, while weak$^\dagger$ domination also imposes the dual comparison, obtained through a common Bernoulli sprinkling. We believe that these notions, and their analogues for FKG, may be useful in other contexts where exact monotonicity or positive association is absent, but where the models remain close to satisfying such properties.

\medskip
In the following theorem, we find the optimal threshold for weak$^\dagger$ domination of the energy field of the Ising model.
\begin{theorem}[Threshold for weak$^\dagger$ domination] \label{theorem weak * domination for Ising introduction}
	Let $J \leq \tilde{J}$ be (edge-dependent) coupling constants, where the inequality holds pointwise. Then, for any parameter $p\in[0,1]^E$ with $p<p^\dagger(J):=1-\exp(-4J)$ pointwise, we have
	\begin{align}\label{eq.weak_dom*}
		\en_{J} \cap \ber{p} \domi^\dagger \en_{\tilde{J}} \cap \ber{p}.
	\end{align}
	Furthermore, for $p\ge p^\dagger(J)$ the result does not hold in general.
\end{theorem}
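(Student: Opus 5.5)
The plan is to split weak$^\dagger$ domination into its two components and handle each separately. I also need a counterexample for $p\ge p^\dagger(J)$.

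\textbf{Thinning half.} Weak domination of $\en_J\cap\ber{p}$ by $\en_{\tilde J}\cap\ber{p}$ follows from Theorem \ref{theorem weak stochastic monotonicity for Ising introduction}. Indeed, $(\en_J\cap\ber{p})\cap\ber{p'}=\en_J\cap\ber{pp'}$. Choosing $p'>0$ small enough that $pp'\le 1-e^{-2J}$ pointwise, \eqref{eq.weak_dom} gives the required domination.

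\textbf{Sprinkling half.} I must find some $q<1$ with
\begin{align*}
(\en_J\cap\ber{p})\cup\ber{q}\domi(\en_{\tilde J}\cap\ber{p})\cup\ber{q}.
\end{align*}
Passing to complements, this is the reverse domination for the closed-edge fields $\eta=(\xi\cap\omega)^c$, each further thinned by $\ber{1-q}$. Write $D(\sigma)$ for the set of disagreement edges; it is a cut of $G$. The law of $\eta$ is
\begin{align*}
\P(\eta)\propto \sum_{D\text{ cut},\,D\subset\eta}\ \prod_{e\in D}e^{-2J_e}\prod_{e\in \eta\setminus D}e^{2J_e}(1-p_e)\cdot e^{2J_e}\ \prod_{e\notin\eta}p_e\,e^{2J_e},
\end{align*}
up to normalization. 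Equivalently, each edge of $\eta$ carries a relative cut-weight $t_e=e^{-2J_e}/\bigl(e^{2J_e}(1-p_e)\bigr)$.

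The key step is the standard cut/even-subgraph duality (the high-temperature expansion):
\begin{align*}
\sum_{D\text{ cut}\subset\eta}\prod_{e\in D}t_e\ \propto\ \prod_{e\in\eta}(1+t_e)\sum_{F\text{ even}\subset\eta}\prod_{e\in F}\frac{1-t_e}{1+t_e}.
\end{align*}
Here $t_e<1$ exactly when $1-p_e>e^{-4J_e}$, that is, when $p_e<p^\dagger(J)_e$. This is where the threshold comes from: the dual weights are then strictly positive. The measure on $\eta$ becomes the marginal of a positive joint measure on pairs $(\eta,F)$ with $F\subset\eta$ even. This is presumably the field $\edag_J$ of \eqref{eq formula for edag}.

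I would then compare the two laws edge by edge using Holley's inequality (Theorem \ref{theorem holley}), as in the proof of Theorem \ref{main theorem introduction}. The extra $\ber{1-q}$ thinning, with $1-q>0$ small, serves to damp the global non-local factor (the parity constraint on $F$ and the normalization). The goal is to make the conditional one-edge probabilities of the two $\eta$-measures ordered in the correct direction uniformly in the boundary configuration. Strict inequality $p<p^\dagger$ gives a uniform positive margin, so some $q<1$ suffices.

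\textbf{Main obstacle.} The hard part is verifying the Holley lattice condition after sprinkling, since the even-subgraph sum is not a product of local terms. I would first try the dominating-coupling route. Write $(\en_J\cap\ber p)^c\cap\ber{1-q}$ as (an FK-type or Bernoulli measure coming from the positive $F$-expansion) intersected with an independent Bernoulli field. Then invoke \eqref{eq.strong_dom} or \eqref{eq.mon_Intersection}, with the roles of $J$ and $\tilde J$ giving $t_e$ decreasing in $J$.

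\textbf{Optimality.} For $p\ge p^\dagger(J)$ I would use a small graph, e.g. a triangle or a short cycle with constant $J$ and $\tilde J=J+\varepsilon$. As $q\to1$, only configurations with very few closed edges matter, so the sprinkled domination reduces to leading-order comparisons of probabilities of small closed sets. I would compute the probability that a given pair of edges are the only closed ones, as a function of $J$ and $p$. At $t_e\ge1$ its derivative in $J$ has the wrong sign, so no $q<1$ works.
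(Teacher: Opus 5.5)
\textbf{There is a genuine gap in the sprinkling half.} Your thinning half is correct. The sprinkling half, however, is the real content of the theorem, and it is not proved: the Holley verification is left open as the ``main obstacle'', and the heuristic meant to justify it rests on a miscalculation. A cut $D$ has $\en_J(\xi^c=D)\propto\prod_{e\in D}e^{-2J_e}$, so the closed-edge field $\eta$ of $\en_{J,p}$ satisfies
\begin{align*}
\P(\eta=S)\propto\prod_{e\notin S}p_e\prod_{e\in S}(1-p_e)\sum_{D\text{ cut},\,D\subseteq S}\ \prod_{e\in D}t_e,\qquad t_e=\frac{e^{-2J_e}}{1-p_e}.
\end{align*}
Your $t_e$ therefore carries a spurious factor $e^{-2J_e}$. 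With the correct $t_e$, the dual weights $(1-t_e)/(1+t_e)$ are positive iff $p<p(J)=1-e^{-2J}$, not iff $p<p^\dagger(J)$; at $p=p^\dagger$ they equal $-\tanh J_e$. For $p\le p(J)$ the sprinkled statement is trivial anyway: Theorem \ref{theorem weak stochastic monotonicity for Ising introduction} already gives $\en_{J,p}\domi\en_{\tilde J,p}$, and this survives taking the union with $\ber{q}$. So in the only nontrivial regime $p(J)<p<p^\dagger(J)$, your positive joint measure on $(\eta,F)$ does not exist, and the mechanism fails exactly where it is needed. (Your ``$\propto$'' also hides an $\eta$-dependent factor $2^{k(\eta^c)}$.)

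\textbf{The missing idea.} No coupling is needed at all. By Proposition \ref{proposition weak stochastic domination}(iii) applied to the complements, it suffices to show $\en_{J,p}(\A_F)\ge\en_{\tilde J,p}(\A_F)$ for all $F$, strictly for $F\neq\emptyset$ (after factorizing over blocks). The threshold $p^\dagger$ is exactly where $1-p=q^2$ (with $q=e^{-2J}$). This gives $q_D\,q^2_{F\setminus D}=q_F\,q_{D\Delta F}$, hence $\en_{J,p^\dagger}(\A_F)=q_F\langle\sigma(F)\rangle_{J^*}$ for an abstract Ising model on the cut space with $\tanh J^*=e^{-2J}$; see \eqref{eq formula for edag}. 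Monotonicity in each $J_e$ at fixed $p$ then comes from the M\"obius dependence on $q_e$, together with GKS for the subgroup of cuts avoiding $e$. Proposition \ref{proposition strict inequalities below large inequalities} transports these inequalities to every $p<p^\dagger(J)$ and makes them strict once they are strict for singletons. That last step reduces to strict monotonicity of $\langle\sigma_x\sigma_y\rangle_J$ on $2$-connected blocks.

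\textbf{Your optimality plan also fails at $p=p^\dagger$.} There, $\en_{J,p^\dagger}(\A_F)\ge\en_{J',p^\dagger}(\A_F)$ holds for every $F$ and $J'\ge J$ (Proposition \ref{proposition inequalities for edag}). So as the sprinkling parameter tends to $1$, every decreasing event compares in the right direction at leading order, and no leading-order derivative has the wrong sign. With the corrected $t_e$, a rule ``$t_e\ge 1$ forces failure'' would even contradict the positive result for $p(J)\le p<p^\dagger(J)$.

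A counterexample must instead combine a leading-order equality case with a subleading term that moves the wrong way. On $C_4$, $\edag_J(\A_F)=q_F$ for $|F|$ even, so $\edag_J(\A_{\{1,3\}})$ and $\edag_J(\A_{\{2,3\}})$ do not depend on $J_0$. Meanwhile $\edag_J(\A_{\{1,2,3\}})=q_1q_2q_3Z_{\mathrm{odd}}/Z$ decreases in $J_0$. Hence the decreasing event $\A_{\{1,3\}}\cup\A_{\{2,3\}}$ becomes more likely under $\edag_J\cup\ber{r}$ as $J_0$ grows, for every $r<1$. This needs an edge outside all minimal witnesses, which is why a four-cycle rather than a triangle is used. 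Finally, you should state the reduction of $p>p^\dagger$ to $p=p^\dagger$, which follows from Proposition \ref{proposition weak * domination stable}.
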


As before, the last sentence of Theorem \ref{theorem weak * domination for Ising introduction} summarizes a sharpness statement. The precise sense in which $p^\dagger$ is the optimal threshold parameter is detailed in Theorem \ref{theorem weak *}, as explained in Subsection \ref{ss. at level pdag}. Note that in general one cannot take $p=p^\dagger(J)$ in \eqref{eq.weak_dom*}. This is not a contradiction since the weak$^\dagger$ domination is not closed under weak limits of measures.

To keep the introduction focused, we do not state the parallel results for the FKG property here. However, in Theorems \ref{theorem weak monotonicity and FKG for energy field} and \ref{theorem weak *}, we obtain not only the results for weak and weak$^\dagger$ stochastic domination, but also for the corresponding weak and weak$^\dagger$ FKG properties for the Ising energy field.

\subsection{Ideas of the proofs and organization}
The proofs have two separate components. The FK comparison Theorem \ref{main theorem introduction} is proved first, using Holley's inequality. Combined with the Edwards--Sokal identity, it gives the weak monotonicity result at the parameter $p(J)=1-e^{-2J}$, as explained above.

The weak$^\dagger$ threshold $p^\dagger(J)=1-e^{-4J}$ is obtained by a different argument. We introduce and study the model $\edag_J:=\en_J\cap\ber{1-e^{-4J}}$ and prove, in Proposition \ref{proposition inequalities for edag}, inequalities that are a slightly weaker form of weak$^\dagger$ domination at the threshold $p^\dagger(J)$. The proof uses a high-temperature expansion for an abstract Ising model associated with the cycle space of $G$. On planar graphs this is the usual dual high-temperature expansion; on general finite graphs it replaces the missing planar dual by an abstract Ising model (see Remark \ref{r.abstract_Ising} for a discussion on why this makes sense). This is the first step and the main difficulty in obtaining weak$^\dagger$ domination (and the parallel notion of weak$^\dagger$ FKG) below $p^\dagger$. A technical lemma then turns these endpoint inequalities into weak$^\dagger$ domination, and weak$^\dagger$ FKG, for every strictly smaller parameter.

The introduction of an abstract duality is also close in spirit to the argument we highlight in Remark \ref{rk. union equivalent intersection} to explain why one can see that the two statements of Theorem \ref{main theorem introduction} about FK percolation are equivalent.

Optimality is proved by explicit counterexamples. For weak$^\dagger$ domination, a cycle with four vertices already shows that parameters larger than $p^\dagger(J)$ cannot be allowed in general. For weak domination above the Edwards--Sokal threshold, the counterexample is chosen to be a cycle as well, though it needs to be quite large. The argument uses a decomposition of the energy field into a linear combination of Bernoulli measures, including one with a parameter outside $[0,1]$, which makes it possible to detect the failure of monotonicity for a suitable increasing event. 

\medskip The paper is organized as follows. Section \ref{section FK percolation} proves the FK comparison Theorem \ref{main theorem introduction}. Section \ref{section general theory of domination} introduces weak domination, weak$^\dagger$ domination, and the corresponding weak FKG notions, together with the general facts used later. Section \ref{section Ising's energy field and fk hat} proves the Ising results, develops the abstract high-temperature expansion for $\edag_J$, and gives the counterexamples establishing optimality of the two thresholds.

\subsection*{Acknowledgements} We wish to thank Nathan de Montgolfier, Lorca Heeney, Émile Averous and Tomás Alcalde-López for useful discussions. Thanks as well to Romain Panis for having encouraged us at an early stage of the project. Many thanks to Christophe Garban for great feedback on an earlier version of this paper, and to Yinon Spinka for providing us useful references.

AI tools contributed at the places explicitly indicated in the text. In particular, the decomposition \eqref{eq decomposition en_J in sum of bernoulli} arose in an AI-assisted discussion; despite our search, we may have missed an earlier source and thus failed to credit its authors.

Both authors are supported by ERC 101043450 Vortex. 
The research of A.S. was also supported by Centro de Modelamiento Matem\'{a}tico Basal Funds FB210005 from ANID-Chile and by Fondecyt Grant 1240884.

\section{A new inequality for the FK-model}
\label{section FK percolation}
The objective of this section is to prove Theorem \ref{main theorem introduction}, that is, to show that the intersection of two independent FK-models stochastically dominates the FK-model whose parameters are the products of the former ones. 

Let us start by recalling the definition of the FK-model. Fix a finite graph $G=(V,E)$, a real number $q>0$, and an assignment $p\in [0,1]^E$ of a number in $[0,1]$ to each edge. The FK-model (with free boundary conditions) is the probability measure on percolation configurations $\omega\in \mathcal P(E)=\{0,1\}^E$
\begin{align*}
	\fk_{p,q}(\omega)\propto \prod_{e\in \omega} p_e \prod_{e\in E\backslash \omega}(1-p_e) \ q^{k(\omega)},
\end{align*}
where $k(\omega)$ is the number of connected components of the graph $G_\omega= (V,\omega)$. In the following, we always identify $\omega$ either with a subset of $E$ or with an element of $\{0,1\}^E$; in this context, for an edge $e\in E$, the statement $e\in \omega$ is equivalent to $\omega(e)=1$.

We can now state precisely the two main results of this section. To do this, recall that for any two probability measures $\mu,\nu$ on $\mathcal P(E)$, we define $\mu\cap \nu$ as the push-forward under the intersection of the measure $\mu\times \nu$. In words, it is the law of the intersection of two independent configurations $\omega_1\sim \mu$ and $\omega_2\sim\nu$. 
\begin{theorem}\label{th. intersection}
    For any parameters $p,\tilde{p}\in [0,1]^{E}$ and real numbers $q,\tilde{q}\ge1$, we have the following stochastic domination:
    \begin{equation}\label{eq. stochastic domination intersection}
        \fk_{p,q} \cap \fk_{\tilde{p},\tilde{q}} \succcurlyeq \fk_{p\tilde{p},q\tilde{q}}.
    \end{equation}
\end{theorem}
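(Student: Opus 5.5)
I would avoid lifting to pairs of configurations. Instead I would apply Holley's inequality (Theorem \ref{theorem holley}) directly on $\{0,1\}^E$, in its single-edge form. Write $\lambda$ for the law of $\eta=\omega_1\cap\omega_2$, where $\omega_1\sim\fk_{p,q}$ and $\omega_2\sim\fk_{\tilde p,\tilde q}$ are independent. First assume all parameters lie in $(0,1)$, so that both measures are strictly positive; the general case then follows by a weak limit, since stochastic domination is closed under weak limits.

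The lower measure is $\fk_{p\tilde p,q\tilde q}$. Since $q\tilde q\ge 1$, it satisfies the FKG lattice condition, so by the standard form of Holley's criterion it suffices to show the following. For every edge $e$ and every pair of configurations $\eta'\le\eta$ on $E\setminus\{e\}$,
\begin{align*}
\fk_{p\tilde p,q\tilde q}\bigl(e\in\cdot \mid \eta'\text{ off } e\bigr)\;\le\;\lambda\bigl(e\in\cdot\mid \eta \text{ off } e\bigr).
\end{align*}
If $e$ is a bridge for $\eta'$ (its endpoints are not connected in $\eta'\setminus\{e\}$), the left-hand side equals $\frac{p\tilde p}{p\tilde p+(1-p\tilde p)q\tilde q}$. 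Otherwise it equals $p_e\tilde p_e$.

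For the right-hand side I would condition further on the whole pair $(\omega_1,\omega_2)$ restricted to $E\setminus\{e\}$; the event $\{\eta=\eta\text{ off } e\}$ is a union of such events. Given the pair off $e$, the variables $\omega_1(e)$ and $\omega_2(e)$ are conditionally independent. By the usual FK conditional formula, $\omega_1(e)=1$ with probability $p$ or $\frac{p}{p+(1-p)q}$, according to whether $e$ closes a cycle in $\omega_1$ or not, and similarly for $\omega_2(e)$ with $\tilde p,\tilde q$. Hence the conditional probability that $e\in\eta$ is a product of these two quantities. Averaging over the conditional law of the pair given $\eta$ off $e$ keeps it between the extreme values of this product.

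Two cases remain.
\begin{itemize}
\item If $e$ is not a bridge for $\eta'$, then it is not a bridge for $\eta\supseteq\eta'$. Since $\omega_i\supseteq\eta$ off $e$, it is not a bridge for $\omega_1$ or $\omega_2$ either. Both factors are then $p$ and $\tilde p$, giving exactly $p\tilde p$.
\item Otherwise, use the worst-case bound $\frac{p}{p+(1-p)q}\cdot\frac{\tilde p}{\tilde p+(1-\tilde p)\tilde q}$. It then remains to check the elementary inequality
\begin{align*}
(p+(1-p)q)(\tilde p+(1-\tilde p)\tilde q)\le p\tilde p+(1-p\tilde p)q\tilde q .
\end{align*}
Expanding, the difference between the right and left sides is $p(1-\tilde p)\tilde q(q-1)+\tilde p(1-p)q(\tilde q-1)$, which is nonnegative precisely because $q,\tilde q\ge 1$.
\end{itemize}

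The step I expect to need care is the justification of the averaging argument. One must verify that $\lambda(e\in\cdot\mid\eta\text{ off }e)$ is a convex combination of the conditional probabilities given the full pair off $e$. One must also check that the case distinction correctly uses the inclusions $\eta'\subseteq\eta\subseteq\omega_i$ off $e$: the non-bridge property propagates upward, which is the only direction needed.

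A naive attempt to lift everything to pairs $(\omega_1,\omega_2)$ and apply Holley there fails. Adding an edge to $\omega_1$ outside $\omega_2$ breaks the single-edge comparison, which is why I work directly with the marginal law $\lambda$ of the intersection. Finally, \eqref{eq.strong_dom} is recovered as the special case $\tilde q=1$, with $p\tilde p$ renamed appropriately.
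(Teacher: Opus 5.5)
Your proposal is correct and follows the paper's proof essentially step for step. Both apply Holley's criterion with $\fk_{p\tilde p,q\tilde q}$ as the lower measure and the intersection law as the upper one. Both compute the conditional probability of $e$ by conditioning on the pair $(\omega_1,\omega_2)$ off $e$ and using the conditional independence of $\omega_1(e)$ and $\omega_2(e)$, then run the same two-case analysis and check the same elementary inequality. The only cosmetic difference is that the paper first reduces to $\xi'=\xi$ using monotonicity of $\fk_{p\tilde p,q\tilde q}(e\mid\cdot)$, whereas you handle $\eta'\le\eta$ directly in the case split, so your appeal to the FKG lattice condition is not actually needed.
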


Defining the union $\mu\cup\nu$ of two probability measures $\mu,\nu$ in the same way, one gets a complementary (and in fact equivalent, see Remark \ref{rk. union equivalent intersection}) stochastic domination for the union of two FK measures. 

\begin{theorem}\label{th. union}
 For any parameters $p,\tilde{p}\in [0,1]^{E}$ and real numbers $q,\tilde{q}\ge1$, we have the following stochastic domination:
    \begin{equation}\label{eq. stochastic domination union}
        \fk_{p,q} \cup \fk_{\tilde{p},\tilde{q}} \domi \fk_{\widehat{p},q\tilde{q}}
    \end{equation}
where $\widehat{p}$ is given by the formula \eqref{eq. p hat **}.
\end{theorem}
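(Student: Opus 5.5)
The plan is to deduce Theorem~\ref{th. union} from Theorem~\ref{th. intersection} by taking complements. Complementation reverses stochastic order and turns unions into intersections. The obstacle is that the complement of an FK configuration on a non-planar graph is not an FK configuration on any graph. I would get around this by working in the slightly larger class of \emph{matroid} random-cluster measures, and would first check that the proof of Theorem~\ref{th. intersection} goes through in that class.

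\textbf{Step 1 (matroid reformulation).} Write $r(\omega)=|V|-k(\omega)$ for the rank function of the cycle matroid of $G$. Then
\[
\fk_{p,q}(\omega)\propto \prod_{e\in\omega}p_e\prod_{e\notin\omega}(1-p_e)\,q^{-r(\omega)}.
\]
For a general matroid $M$ on $E$ with rank $r_M$, define $\fk^M_{p,q}$ by the same formula. The proof of Theorem~\ref{th. intersection} is via Holley's inequality, and the single-edge conditional ratios there only involve the factor $q^{-(r(\omega\cup e)-r(\omega))}$. I would therefore check that the only graph property used is the diminishing-returns form of submodularity,
\[
r(\omega\cup e)-r(\omega)\ \ge\ r(\omega'\cup e)-r(\omega')\qquad\text{for } \omega\subseteq\omega',
\]
which is "pivotality decreases with the configuration". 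I would also check the two facts specific to intersections: an edge that is pivotal for $\omega_1\cap\omega_2$ is also relevant in each factor, and the rank increments take values in $\{0,1\}$. Granted this check, Theorem~\ref{th. intersection} holds verbatim for $\fk^M$ for every matroid $M$, with $q,\tilde q\ge 1$.

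\textbf{Step 2 (duality).} Let $M^*$ be the dual matroid, with $r^*(A)=|A|-r(E)+r(E\setminus A)$. Substituting $A=E\setminus\omega$ gives
\[
q^{-r(\omega)}=q^{-r(E)}\,q^{|E\setminus\omega|}\,q^{-r^*(E\setminus\omega)}.
\]
Hence the complement of $\omega\sim\fk^M_{p,q}$ has law $\fk^{M^*}_{p^*,q}$, where the edge weight is
\[
\frac{p^*}{1-p^*}=\frac{q(1-p)}{p}.
\]
This is the map $p\mapsto p^*$ alluded to in the notation $(p^*\tilde p^*)^*$ and in Remark~\ref{rk. union equivalent intersection}, and it is an involution for fixed $q$. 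The only point needing care is that the correct dual parameter uses the \emph{same} $q$ for each factor. Under that convention, $\tilde p^*$ is defined with $\tilde q$, and $\widehat p=(p^*\tilde p^*)^*$ is defined with $q\tilde q$.

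\textbf{Step 3 (conclusion).} Let $\omega_1\sim\fk_{p,q}$ and $\omega_2\sim\fk_{\tilde p,\tilde q}$ be independent. Then $(\omega_1\cup\omega_2)^c=\omega_1^c\cap\omega_2^c$ has law
\[
\fk^{M^*}_{p^*,q}\cap\fk^{M^*}_{\tilde p^*,\tilde q}.
\]
By Step~1 applied to $M^*$, this law dominates $\fk^{M^*}_{p^*\tilde p^*,\,q\tilde q}$. By Step~2, the latter is the law of the complement of a configuration with law $\fk_{(p^*\tilde p^*)^*,\,q\tilde q}$. Taking complements reverses the domination and yields \eqref{eq. stochastic domination union}. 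It then remains to expand $(p^*\tilde p^*)^*$ and verify that it equals \eqref{eq. p hat **}. Setting $x=p^*\tilde p^*$, the formula $\widehat p=(1-x)q\tilde q/\bigl(x+(1-x)q\tilde q\bigr)$ should reduce to the displayed expression after clearing denominators; this is routine algebra. Edge cases $p_e\in\{0,1\}$ would be handled by continuity, or by noting that $p^*$ takes the values $1,0$ there.

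\textbf{Main obstacle.} I expect the hard step to be Step~1: confirming that the Holley verification in the proof of Theorem~\ref{th. intersection} uses only submodularity of the rank and not graph-specific connectivity arguments. If it does use something graph-specific, I would redo the Holley check directly on the lifted pair space $\{0,1\}^E\times\{0,1\}^E$ for matroids. That check compares conditional probabilities of opening $e$ in one coordinate, case by case according to whether $e$ lies in the other coordinate, using only the monotonicity of rank increments.
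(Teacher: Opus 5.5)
Your proof is correct, but it follows a different route from the paper's.

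\textbf{The paper's route.} The paper proves Theorem~\ref{th. union} directly on $G$ with Holley's inequality, taking $\mu=\fk_{p,q}\cup\fk_{\tilde p,\tilde q}$ and $\nu=\fk_{\widehat p,q\tilde q}$.
\begin{enumerate}[(i)]
\item If $\xi$ does not connect the endpoints of $e$, both factors sit at their minimal one-edge conditional probabilities, and $\mu(e\mid\xi)=\nu(e\mid\xi)$ by \eqref{eq. relation p**}.
\item If $\xi$ does connect them, $\mu(e\mid\xi)\le p+\tilde p-p\tilde p\le\widehat p$.
\item Monotonicity of $\nu(e\mid\cdot)$ then finishes the argument.
\end{enumerate}

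\textbf{Your route.} You make literal the duality that Remark~\ref{rk. union equivalent intersection} only sketches, namely the ``more general FK percolation'' the authors decline to pursue. The complement identity for the matroid rank turns $\fk^M_{p,q}$ into $\fk^{M^*}_{p^*,q}$. The union statement for $M$ then becomes the intersection statement for $M^*$. Your computations check out:
\begin{enumerate}[(i)]
\item $p^*=q(1-p)/(p+q(1-p))$ is an involution.
\item $(p^*\tilde p^*)^*$, built from the three maps for $q,\tilde q,q\tilde q$, simplifies to \eqref{eq. p hat **}.
\item Complementation reverses the order as you say.
\end{enumerate}

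\textbf{Step 1.} Your ``main obstacle'' is not one. The proof of Theorem~\ref{th. intersection} only uses two facts, and both are purely matroidal. First, the one-edge conditional probability is $p_e$ or $p_e/(p_e+q(1-p_e))$ according to whether $e$ is spanned by the outside configuration. Second, being spanned is increasing in that configuration.

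\textbf{A phrasing fix.} What that proof needs is: if $e$ is spanned by $(\omega_1\cap\omega_2)|_{E\setminus\{e\}}$, then it is spanned by each $\omega_i|_{E\setminus\{e\}}$. This is monotonicity of closure, which your diminishing-returns inequality already gives. Your sentence ``pivotal for $\omega_1\cap\omega_2$ implies relevant in each factor'' points the wrong way. An edge can be pivotal for the intersection while being spanned by $\omega_1$; in that case the proof simply uses the lower bound.

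\textbf{What each approach buys.} The paper's argument stays on graphs and needs only two one-edge algebraic facts. Remark~\ref{rk. smooth computations with p **} itself verifies these via the $*$ map. Yours costs a detour through matroid random-cluster measures, but in return it:
\begin{enumerate}[(i)]
\item proves the equivalence of Theorems~\ref{th. intersection} and~\ref{th. union} for every finite graph, not just planar ones;
\item gives both statements for arbitrary matroids;
\item explains why the case split is mirrored between the two Holley proofs.
\end{enumerate}
For the last point: for $e\notin X$, $e$ is spanned by $X$ in $M$ exactly when it is not spanned by $E\setminus(X\cup\{e\})$ in $M^*$. That is why equality occurs in the ``connected'' case for intersections and in the ``disconnected'' case for unions.
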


The proofs of these two theorems are very similar and are based on Holley's inequality, which we recall below. First, we need to introduce the following quantities. Let $\mu$ be a probability measure on $\{0,1\}^E$, let $e\in E$ be an edge, and let $\xi\in\{0,1\}^{E\backslash\{e\}}$ be a percolation configuration outside of $e$, and define \begin{align*} \mu(e\mid\xi):=\mu\left(e\in \omega \ \big| \ \omega|_{E\backslash \{e\}}=\xi\right).
\end{align*}
This is the probability that a given edge belongs to $\omega$ given the sample outside the edge.
The following result, known as Holley's inequality, has proven to be very useful to show stochastic dominations in many contexts. Although we do not give a proof, let us say that it can be deduced quickly from a coupling of two Markov chains.

\begin{theorem}[Holley's inequality, \cite{holley1974remarks}]\label{theorem holley}
Let $\mu,\nu$ be measures of full support on $\{0,1\}^E$. Let us assume that for any $e\in E$ and $\xi,\xi'\in \{0,1\}^{E\backslash \{e\}}$ with $\xi'\le\xi$, we have the following inequality \begin{equation}\label{Holley condition}
\mu(e \mid \xi') \le \nu(e \mid \xi).
\end{equation}
\noindent Then $\mu\domi\nu$.
\end{theorem}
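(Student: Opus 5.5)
The proof I would give constructs a monotone coupling of $\mu$ and $\nu$ as the stationary limit of two heat-bath (Glauber) dynamics driven by the same randomness. By Strassen's characterization, $\mu\domi\nu$ is equivalent to the existence of a coupling $(\omega,\omega')$ with $\omega\sim\mu$, $\omega'\sim\nu$ and $\omega\le\omega'$ almost surely. I will therefore build a Markov chain on ordered pairs $\{(\omega,\omega') : \omega\le\omega'\}$ whose two marginals are reversible chains with stationary laws $\mu$ and $\nu$.

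\textbf{The coupled chain.} At each step, pick an edge $e\in E$ uniformly at random and an independent uniform variable $U\in[0,1]$. Then update the two configurations as follows.
\begin{itemize}
\item Set $\omega_e:=\mathbf 1_{U\le \mu(e\mid \omega|_{E\setminus\{e\}})}$.
\item Set $\omega'_e:=\mathbf 1_{U\le \nu(e\mid \omega'|_{E\setminus\{e\}})}$.
\item Leave all other coordinates unchanged.
\end{itemize}
The conditional probabilities are well defined because both measures have full support. Each marginal is the standard heat-bath chain, and a one-line detailed-balance check shows it is reversible with respect to $\mu$ (respectively $\nu$).

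\textbf{Preservation of order.} Suppose $\omega\le\omega'$ before the step. Apply \eqref{Holley condition} with $\xi'=\omega|_{E\setminus\{e\}}\le\xi=\omega'|_{E\setminus\{e\}}$. This gives $\mu(e\mid\xi')\le\nu(e\mid\xi)$, so with the common $U$ we get $\omega_e\le\omega'_e$. The other coordinates are untouched, so $\omega\le\omega'$ still holds after the step. Start the chain from the ordered pair $(\emptyset,E)$; then $\omega_n\le\omega'_n$ for every $n$.

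\textbf{Passage to the limit.} Full support makes every conditional probability lie strictly in $(0,1)$. Hence each marginal chain is irreducible, since any edge can be flipped in one step with positive probability. It is also aperiodic, since there is a positive holding probability. By the convergence theorem for finite Markov chains, the law of $\omega_n$ converges to $\mu$ and the law of $\omega'_n$ converges to $\nu$. The pair process lives on a finite state space, so some subsequence of the joint laws converges to a measure $\pi$. The set $\{\omega\le\omega'\}$ is closed (indeed finite), so $\pi$ is supported on it. Its marginals are the limits of the marginals, namely $\mu$ and $\nu$. Thus $\pi$ is the desired monotone coupling, and $\mu\domi\nu$.

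If one prefers to avoid aperiodicity, one can instead use Cesàro averages of the joint laws. The marginals of these averages still converge to the unique stationary laws $\mu$ and $\nu$ by irreducibility.

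\textbf{Main obstacle.} The only delicate point is the order-preservation step. It requires that, for an ordered pair of configurations, \eqref{Holley condition} be applied with the smaller boundary condition on the $\mu$ side and the larger one on the $\nu$ side, and that the same uniform $U$ be used for both updates. The rest consists of standard finite Markov chain facts.
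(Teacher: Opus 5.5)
Your proof is correct, and it is the argument the paper has in mind. The paper does not write out a proof of Holley's inequality; it only remarks that it follows quickly from a coupling of two Markov chains, which is exactly your coupling of the two heat-bath dynamics. In that coupling both chains use a common edge and a common uniform variable, and the Holley condition is applied with the smaller boundary condition on the $\mu$ side. (A minor simplification: instead of extracting a limiting coupling, you can pass to the limit directly in $\P(\omega_n\in A)\le\P(\omega'_n\in A)$ for each increasing event $A$.)
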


In order to make use of this criterion for stochastic domination, we want to know what the values $\mu(e\mid\xi)$ are when $\mu$ is an FK-percolation measure. The answer is given by the following proposition:

\begin{proposition}[One-point function conditioned on the exterior, Theorem 3.1 in \cite{grimmett2006random}]\label{proposition one-point function}
For $q> 0$ and $p\in [0,1]^E$, the one-point function of the FK-model conditionally on the outside is given by
\begin{equation}\label{equation one-point function}
\fk_{p,q}(e\mid \xi) = \begin{cases} 
\quad \ p_e & \text{ if } \xi \text{ connects the two endpoints of }e, \\
 \frac{p_e}{p_e+q(1-p_e)} &\text{ if } \xi \text{ does not.}
\end{cases}
\end{equation}
Furthermore, this property defines $\fk_{p,q}$.
\end{proposition}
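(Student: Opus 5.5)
The plan is a direct computation from the explicit form of the FK weight, followed by a ratio argument for the characterization. Write $w(\omega)=\prod_{f\in\omega}p_f\prod_{f\notin\omega}(1-p_f)\,q^{k(\omega)}$ for the unnormalized weight. Fix $e=(xy)$ and $\xi\in\{0,1\}^{E\setminus\{e\}}$, and denote by $\xi^e$ and $\xi_e$ the configurations obtained by declaring $e$ open, respectively closed. Then
\begin{align*}
\fk_{p,q}(e\mid\xi)=\frac{w(\xi^e)}{w(\xi^e)+w(\xi_e)},
\end{align*}
and the normalizing constant and all factors attached to edges other than $e$ cancel. This leaves $p_e\,q^{k(\xi^e)}$ against $(1-p_e)\,q^{k(\xi_e)}$.

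The only combinatorial input is the effect of adding the single edge $e$ to $\xi$ on the number of clusters. If $\xi$ already connects $x$ and $y$, the edge closes a cycle and $k(\xi^e)=k(\xi_e)$. Otherwise it merges the two distinct clusters of $x$ and $y$, so $k(\xi^e)=k(\xi_e)-1$. Dividing numerator and denominator by $q^{k(\xi^e)}$ gives respectively $p_e/(p_e+1-p_e)=p_e$ and $p_e/(p_e+q(1-p_e))$, which is \eqref{equation one-point function}.

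The degenerate values $p_e\in\{0,1\}$ need a word. The formula still holds, since both expressions equal $p_e$, whenever the conditioning event $\{\omega|_{E\setminus\{e\}}=\xi\}$ has positive probability. This requires $q>0$, so that the denominator does not vanish.

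For the characterization, suppose $\mu$ is a probability measure on $\{0,1\}^E$ whose single-edge conditional laws are given by \eqref{equation one-point function}.
\begin{itemize}
\item First assume $p\in(0,1)^E$. Then every conditional probability lies strictly in $(0,1)$, which forces $\mu$ to have full support. Indeed, if $\mu(\omega)>0$, then flipping one edge keeps positive mass, and any configuration is reached from $\omega$ by successive flips.
\item The conditional probabilities then fix every ratio $\mu(\xi^e)/\mu(\xi_e)$, and this ratio equals $w(\xi^e)/w(\xi_e)$.
\item Chaining such ratios along a path of single-edge flips from $\emptyset$ to an arbitrary $\omega$ shows $\mu(\omega)/\mu(\emptyset)=w(\omega)/w(\emptyset)$. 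Normalization then gives $\mu=\fk_{p,q}$.
\end{itemize}
When some $p_e\in\{0,1\}$, the conditionals force $e$ to be almost surely open or closed respectively. One then runs the same flip argument on the remaining edges, with the forced edges frozen at their values.

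I expect no genuine obstacle. The only point needing care is this last step: the meaning of ``defines'' when the measure does not have full support. I would handle it exactly by freezing the deterministic edges, as described above.
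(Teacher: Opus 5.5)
Your proof is correct and is essentially the standard argument behind Theorem 3.1 in Grimmett's book, which the paper cites rather than reproves: compare the weights of $\xi^e$ and $\xi_e$ using $k(\xi^e)-k(\xi_e)\in\{0,-1\}$, then recover the measure from the one-edge odds ratios by establishing positivity and chaining single-edge flips. Freezing the forced edges when some $p_e\in\{0,1\}$ is a sound way to make ``defines'' precise for edge-dependent parameters.
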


\begin{remark}\label{r.tight_ineq} Let us now use Proposition \ref{proposition one-point function} to see that Theorem \ref{th. intersection} is sharp at least when both $p,\tilde{p}\searrow 0$ or both $p,\tilde{p}\nearrow 1$. For the first case, note that when $p\searrow 0$, w.h.p. $\omega|_{\{e\}^c}=0$ and therefore 
	\begin{align*}
	\fk_{p,q}(\omega_e)\sim\frac{p_e}{p_e+(1-p_e)q}(1+o(1))\sim\frac{p_e}q,
	\end{align*}
	which achieves asymptotic equality for \eqref{eq. stochastic domination intersection}.
	
	For the regime close to $1$, take $p\nearrow 1$, and note that w.h.p. $\omega_{\{e\}^c}=1$, and therefore
	\begin{align*}
		\fk_{p,q}(\omega_e)\sim p_e(1+o(1))\sim p_e,
	\end{align*}
	which also achieves asymptotic equality for \eqref{eq. stochastic domination intersection}. A similar reasoning can be done to show the sharpness of Theorem \ref{th. union}.
\end{remark}

\medskip

We now have all the tools to prove Theorems \ref{th. intersection} and \ref{th. union}. The two proofs are the same, though the computations are a bit more tedious for the proof of Theorem \ref{th. union}, as one can guess from the form of $\widehat{p}$. Since we do not use the union statement, Theorem \ref{th. union}, in the rest of the paper, we first prove the intersection statement, Theorem \ref{th. intersection}, carefully and then explain how to adapt the proof to Theorem \ref{th. union}.
\par We explain in Remark \ref{rk. union equivalent intersection} how one can see that the two statements are actually equivalent, and give a sense of why $\widehat{p}$ is given by the complicated expression \eqref{eq. p hat **}. We actually use this viewpoint to streamline some of the computations not performed in the proof of Theorem \ref{th. union}. 

\begin{proof}[Proof of Theorem \ref{th. intersection}]
We will show that the measures $\mu:=\fk_{p\tilde{p},q\tilde{q}}$ and $\nu:=\fk_{p,q}\cap\fk_{\tilde{p},\tilde{q}}$ jointly satisfy the assumptions of Theorem \ref{theorem holley}, which is of course enough. We may assume that the parameters $p,\tilde p$ are strictly between $0$ and $1$; the limiting cases follow by taking weak limits. Therefore, the distributions $\mu,\nu$ are of full support, so we focus on \eqref{Holley condition}.
	
	Let us first note that for all $q\ge 1$ and $p\in [0,1]^E$, the function $\fk_{p,q}(e\mid\cdot)$ is nondecreasing in the outside percolation $\xi$. This follows from the fact that the event that $e^+$ is connected to $e^-$ using $\xi$ is increasing in $\xi$ and that $p_e\ge p_e/(p_e+q(1-p_e))$, as $q\geq 1$.
	
	From the monotonicity of the function $\fk_{p,q}(e\mid\cdot)$, we note that it is enough to show Holley's inequality \eqref{Holley condition} restricted to the case $\xi=\xi'$. Namely, for any $e\in E$ and $\xi\in \{0,1\}^{E\backslash \{e\}}$
	\begin{align}\label{e.Holley_equality}
		\nu(e\mid \xi) \geq \mu(e\mid\xi)=\fk_{p\tilde{p},q\tilde{q}}(e\mid\xi).
	\end{align}
	To do this, let us lower bound the left-hand side.
\begin{claim}\label{c.Holley_Equality}If $\xi$ connects the two endpoints of $e$, $\nu(e\mid\xi)=p\tilde{p}$. If $\xi$ does not connect the two endpoints of $e$, $\nu(e\mid\xi)\ge \frac{p}{p+q(1-p)}\frac{\tilde{p}}{\tilde{p}+\tilde{q}(1-\tilde{p})}$.
\end{claim}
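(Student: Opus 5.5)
The plan is to compute $\nu(e\mid\xi)$ by conditioning on the pair of configurations producing $\xi$. Let $\omega_1\sim\fk_{p,q}$ and $\omega_2\sim\fk_{\tilde p,\tilde q}$ be independent, and set $\omega=\omega_1\cap\omega_2$. The event $\{\omega|_{E\setminus\{e\}}=\xi\}$ is a disjoint union over pairs $(\xi_1,\xi_2)$ of configurations on $E\setminus\{e\}$ with $\xi_1\cap\xi_2=\xi$. Conditionally on $\omega_1|_{E\setminus\{e\}}=\xi_1$ and $\omega_2|_{E\setminus\{e\}}=\xi_2$, the two edge states $\omega_1(e)$ and $\omega_2(e)$ are independent, by independence of the two samples. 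By Proposition \ref{proposition one-point function}, their conditional probabilities of being open are $\fk_{p,q}(e\mid\xi_1)$ and $\fk_{\tilde p,\tilde q}(e\mid\xi_2)$. This gives the mixture formula
\begin{align*}
\nu(e\mid\xi)=\sum_{\xi_1\cap\xi_2=\xi} \P\big(\omega_1|_{E\setminus\{e\}}=\xi_1,\ \omega_2|_{E\setminus\{e\}}=\xi_2 \ \big|\ \omega|_{E\setminus\{e\}}=\xi\big)\,\fk_{p,q}(e\mid\xi_1)\,\fk_{\tilde p,\tilde q}(e\mid\xi_2).
\end{align*}
Thus $\nu(e\mid\xi)$ is a convex combination of products of the two one-point functions.

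For the \emph{second} case of the claim we only need a lower bound. Each one-point function is at least its "disconnected" value: for $q\ge1$ we have $p_e\ge p_e/(p_e+q(1-p_e))$, and likewise for $\tilde p,\tilde q$. Each product in the mixture is therefore at least $\frac{p}{p+q(1-p)}\cdot\frac{\tilde p}{\tilde p+\tilde q(1-\tilde p)}$, and so is the convex combination. This bound does not even use that $\xi$ fails to connect the endpoints of $e$.

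For the \emph{first} case, suppose $\xi$ connects the endpoints of $e$. Every $\xi_1,\xi_2$ in the sum contains $\xi$, since $\xi_1\cap\xi_2=\xi$. Hence both $\xi_1$ and $\xi_2$ connect the endpoints of $e$, and by \eqref{equation one-point function} each product equals $p_e\tilde p_e$. The convex combination is then exactly $p\tilde p$.

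The only delicate point is justifying the mixture formula, namely the conditional independence of $\omega_1(e)$ and $\omega_2(e)$ given both outside configurations. This follows directly from the product structure of $\fk_{p,q}\times\fk_{\tilde p,\tilde q}$. Full support, ensured by $0<p,\tilde p<1$, makes all the conditionings well defined.
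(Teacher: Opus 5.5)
Your proposal is correct and follows the paper's proof. The paper writes your mixture formula as the conditional expectation $\E_\xi\big[\fk_{p,q}(e\mid\omega|_{\{e\}^c})\,\fk_{\tilde p,\tilde q}(e\mid\tilde\omega|_{\{e\}^c})\big]$, then bounds each factor below by its disconnected value and uses $\xi_1,\xi_2\supseteq\xi$ to get exactly $p\tilde p$ in the connected case, just as you do.
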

We now assume the claim and show \eqref{e.Holley_equality}. In this case, it is enough to show that \begin{align}\label{e. two ineq}
	&p\tilde{p}  \ge  p\tilde{p} \hspace{0.05\textwidth}\text{ and }\hspace{0.05\textwidth}
	\frac{p}{p+q(1-p)}\frac{\tilde{p}}{\tilde{p}+\tilde{q}(1-\tilde{p})}  \ge  \frac{p\tilde{p}}{p\tilde{p}+q\tilde{q}(1-p\tilde{p})}.
\end{align}
Of course, the first inequality is trivial; the second can be shown to be equivalent to \begin{align*}
	q(1-p)(\tilde{q}-1)\tilde{p} + (q-1)p\tilde{q}(1-\tilde{p}) \ge 0,
\end{align*}
which is true for all values of $p,\tilde p\in [0,1]$ and $q,\tilde q\geq 1$.
\end{proof}
It remains only to prove the claim.

\begin{proof}[Proof of Claim \ref{c.Holley_Equality}]
	Let us denote by $\E_\xi$ the expectation with respect to $\fk_{p,q}\otimes\fk_{\tilde{p},\tilde{q}}$ conditioned on the event \[\left\{\big(\omega\cap\tilde\omega\big)|_{\{e\}^c}=\xi\right\}.\]
	By definition, $\nu(e\mid\xi)=\E_\xi[\omega_e\tilde\omega_e]$, therefore \begin{align}
		\nu(e\mid\xi)  =  \E_\xi\Big[\fk_{p,q}(e\mid\omega|_{\{e\}^c})\fk_{\tilde{p},\tilde{q}}(e\mid\tilde\omega|_{\{e\}^c})\Big].
	\end{align}
	However, since $q,\tilde{q}\ge1$, \begin{align}
		\fk_{p,q}(e\mid\omega|_{\{e\}^c})  \ge  \frac{p}{p+q(1-p)}\qquad a.s. \\
		\fk_{\tilde{p},\tilde{q}}(e\mid\tilde\omega|_{\{e\}^c})  \ge  \frac{\tilde{p}}{\tilde{p}+\tilde{q}(1-\tilde{p})}\qquad a.s.
	\end{align}
	and therefore the right-hand side is at least $\frac{p}{p+q(1-p)}\frac{\tilde{p}}{\tilde{p}+\tilde{q}(1-\tilde{p})}$. When $\xi$ connects the endpoints of $e$, however, both $\omega|_{\{e\}^c}$ and $\tilde\omega|_{\{e\}^c}$ also connect the endpoints of $e$, $\E_\xi$-a.s. Therefore, in this case, 
	\begin{align}
		\fk_{p,q}(e\mid\omega|_{\{e\}^c})  =  p \qquad a.s. \\
		\fk_{\tilde{p},\tilde{q}}(e\mid\tilde\omega|_{\{e\}^c})  =  \tilde{p} \qquad a.s.
	\end{align}
	whence the claim.
\end{proof}

\begin{proof}[Proof of Theorem \ref{th. union}]
The proof of the union statement closely follows the proof presented above for \eqref{eq. stochastic domination intersection}. We define $\mu=\fk_{p,q} \cup \fk_{\tilde{p},\tilde{q}}$ and $\nu=\fk_{\widehat{p},q\tilde{q}}$ and we apply Holley's theorem (Theorem \ref{theorem holley}) to the pair $\mu,\nu$. Of course, as in the proof of the intersection statement, we may assume that $p,\tilde p$ are always strictly between 0 and 1, so that the measures $\mu,\nu$ are of full support. We may thus focus on proving Holley's inequality \eqref{Holley condition}.
\par The relevant claim is then 
\begin{claim}\label{c.Holley_Equality_2}Let $e\in E$ be any edge, and $\xi$ be a configuration on the set $E\setminus\{e\}$. If $\xi$ does not connect the two endpoints of $e$, $\mu(e\mid\xi)=\nu(e\mid \xi)$. If $\xi$ does connect the two endpoints of $e$, $\mu(e\mid\xi)\le p+\tilde p -p\tilde p$.
\end{claim}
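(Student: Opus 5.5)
The plan is to follow the proof of Claim \ref{c.Holley_Equality}, now with unions. Write $\E_\xi$ for the expectation under $\fk_{p,q}\otimes\fk_{\tilde p,\tilde q}$ conditioned on $\{(\omega\cup\tilde\omega)|_{\{e\}^c}=\xi\}$.

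The first step is a structural observation. Conditionally on the pair of outside configurations $(\omega|_{\{e\}^c},\tilde\omega|_{\{e\}^c})$, the variables $\omega_e$ and $\tilde\omega_e$ are independent, because the two configurations are independent. Their conditional probabilities of being open are $\fk_{p,q}(e\mid\omega|_{\{e\}^c})$ and $\fk_{\tilde p,\tilde q}(e\mid\tilde\omega|_{\{e\}^c})$ respectively. The conditioning event only involves these outside configurations, so
\begin{align*}
\mu(e\mid\xi)=\E_\xi\Big[1-\big(1-\fk_{p,q}(e\mid\omega|_{\{e\}^c})\big)\big(1-\fk_{\tilde p,\tilde q}(e\mid\tilde\omega|_{\{e\}^c})\big)\Big].
\end{align*}
The rest is a case analysis using Proposition \ref{proposition one-point function}.

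\emph{Case where $\xi$ connects the endpoints of $e$.} Since $q,\tilde q\ge 1$, both one-point functions are bounded above by $p$ and $\tilde p$ respectively, almost surely. The integrand is increasing in each of the two one-point functions. Hence $\mu(e\mid\xi)\le 1-(1-p)(1-\tilde p)=p+\tilde p-p\tilde p$. No connectivity information is needed in this case.

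\emph{Case where $\xi$ does not connect the endpoints of $e$.} Here $\omega|_{\{e\}^c}$ and $\tilde\omega|_{\{e\}^c}$ are both subsets of $\xi$, so $\E_\xi$-a.s.\ neither of them connects the endpoints. Both one-point functions are therefore deterministic, equal to $a=\frac{p}{p+q(1-p)}$ and $b=\frac{\tilde p}{\tilde p+\tilde q(1-\tilde p)}$. This gives $\mu(e\mid\xi)=1-(1-a)(1-b)$.

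It remains to check by algebra that this equals $\nu(e\mid\xi)=\frac{\widehat p}{\widehat p+q\tilde q(1-\widehat p)}$. Set $N=p\tilde p+p\tilde q(1-\tilde p)+\tilde p q(1-p)$, so that $\widehat p=N/(N+(1-p)(1-\tilde p))$ by \eqref{eq. p hat **}. Then $\nu(e\mid\xi)=N/(N+q\tilde q(1-p)(1-\tilde p))$.

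On the other side, write $D=(p+q(1-p))(\tilde p+\tilde q(1-\tilde p))$. Then $1-(1-a)(1-b)=\big(D-q\tilde q(1-p)(1-\tilde p)\big)/D$. Expanding $D$ gives exactly $N+q\tilde q(1-p)(1-\tilde p)$, so the two expressions coincide.

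I expect the only delicate point to be the conditional-independence justification at the start. One must check that conditioning on the union outside $e$ does not bias $\omega_e,\tilde\omega_e$ beyond their own one-point functions. This holds because the event is measurable with respect to the outside configurations, and the two configurations are independent. The algebraic identity is then routine, and it is what the formula for $\widehat p$ was designed to make true.
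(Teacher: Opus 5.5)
Your proposal is correct and follows the paper's proof. You condition on the outside configurations, use the conditional independence of $\omega_e$ and $\tilde\omega_e$, and use the bounds and exact values of the one-point functions from Proposition~\ref{proposition one-point function}, exactly as in Claim~\ref{c.Holley_Equality}. The only difference is that you check the identity $1-(1-a)(1-b)=\widehat p/(\widehat p+q\tilde q(1-\widehat p))$ by direct expansion, whereas the paper derives it from the duality relation $\widehat p=(p^*\tilde p^*)^*$ in Remark~\ref{rk. smooth computations with p **}; either way is valid.
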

This claim is proven similarly to Claim \ref{c.Holley_Equality}: for the first point, one must have $1-\mu(e\mid\xi)=(1-\frac p{p+q(1-p)})(1-\frac{\tilde{p}}{\tilde{p}+\tilde q(1-\tilde p)})$ so the equality follows from $\nu(e\mid\xi)=\frac{\widehat{p}}{\widehat{p}+q\tilde q(1-\widehat{p})}$ and the additional relation 

\begin{align}\label{eq. relation p**}
\left(1-\frac p{p+q(1-p)}\right)\cdot\left(1-\frac{\tilde{p}}{\tilde{p}+\tilde q(1-\tilde p)}\right)=1-\frac{\widehat{p}}{\widehat{p}+q\tilde q(1-\widehat{p})}.
\end{align}
(This equality follows directly in Remark \ref{rk. smooth computations with p **} using the relation $\widehat{p}=(p^*\tilde p^*)^*$ explained in Remark \ref{rk. union equivalent intersection}.)

\medskip For the second point, the same reasoning as in the proof of Claim \ref{c.Holley_Equality} gives $1-\mu(e\mid\xi)\geq (1-p)(1-\tilde p)$, that is $\mu(e\mid\xi)\le p+\tilde p -p\tilde p$. This shows the claim.

\medskip Furthermore, this claim can be used to prove \eqref{e.Holley_equality} since in the case $e^-\stackrel{\xi}{\not\leftrightarrow}e^+$ we have $\mu(e\mid\xi)=\nu(e\mid \xi)$ and in the case $e^-\stackrel{\xi}{\leftrightarrow}e^+$ we have $\mu(e\mid\xi)\le\nu(e\mid \xi)$ since \begin{align*}
p+\tilde p -p\tilde p \le \widehat{p}.
\end{align*}
(Once again, we show this last inequality in a meaningful way in Remark \ref{rk. smooth computations with p **}.)

Hence, if $e\in E$ is an edge and $\xi'\le\xi$ are configurations on the set $E\setminus\{e\}$, then $\mu(e\mid\xi')\le\nu(e\mid\xi')\le\nu(e\mid\xi)$ (the second inequality relies on the monotonicity of $\nu(e\mid \cdot)$ since $\nu$ is a FK measure) so that Holley's stochastic domination criterion applies, and we obtain the stochastic domination \eqref{eq. stochastic domination union}.
\end{proof}

\begin{remark}\label{rk. union equivalent intersection}
It can actually be seen that the two stochastic dominations are equivalent in a way that explains the reason why we wrote $\widehat{p}$ in the form $(p^*\tilde{p}^*)^*$. Indeed, when $G$ is planar, if $p^*:=\frac{q(1-p)}{p+q(1-p)}$, $\tilde{p}^*:=\frac{\tilde q(1-\tilde p)}{\tilde p+\tilde q(1- \tilde p)}$ and $(p^*\tilde p^*)^*:=\frac{q\tilde q(1-p^* \tilde p^*)}{p^*\tilde p^*+q\tilde q(1-p^* \tilde p^*)} = \widehat{p}$, then by the standard fact that the dual of $\fk_{p,q}$ percolation is $\fk_{p^*,q}$ percolation on the dual graph with dual parameter $p^*=\frac{q(1-p)}{p+q(1-p)}$ (see, e.g., Section 6.1 in \cite{grimmett2006random}), the domination \eqref{eq. stochastic domination union} is precisely the domination \eqref{eq. stochastic domination intersection} reversed where we take the dual edges. Note that the map $\cdot\mapsto \cdot^*$ \emph{depends on the parameter} $q$, and that in the formula $\widehat{p}=(p^*\tilde{p}^*)^*$, we actually use three such maps, namely for the three parameters $q,\tilde q,q\tilde q$.
\par It is actually possible in the case of a general graph $G$ to define a somewhat more general FK percolation for which Theorem \ref{th. intersection} still holds (with precisely the same proof) and which allows one to take complements even in the absence of a dual graph, so that the equivalence between Theorem \ref{th. intersection} and \ref{th. union} is always true. We do not pursue this further, but a similar theory is developed for the Ising model in Subsection \ref{section abstract Ising}, where we introduce an abstract Ising model allowing some duality even in the case of a non-planar graph.
\end{remark}

\begin{remark}\label{rk. smooth computations with p **}
Let us explain why \eqref{eq. relation p**} holds, without simply plugging the value given by \eqref{eq. p hat **}. Indeed, in the notations of the previous remark, it becomes $p^*\tilde p^*=(\widehat p)^*$ (where the $^*$ are respectively for the parameters $q,\tilde q,q\tilde q$), which is precisely our definition of $\widehat{p}$ (since taking the dual parameter is an involution regardless of the $q$--parameter).
\par Now, let us explain where $p+\tilde{p}-p\tilde p\le\widehat{p}$ comes from. First, rewrite it as $(1-p)(1-\tilde p)\ge 1-\widehat{p}$. Second, since $p,p^*$ are related by $1-p=\frac{p^*}{p^*+q(1-p^*)}$ (and similarly for $\tilde{p}, \tilde{p}^*$ and $\widehat{p},p^*\tilde p^*$), the inequality is precisely the right-hand side of \eqref{e. two ineq} but with $p,\tilde p$ replaced by $p^*,\tilde{p}^*$, and so is true by the same rearrangement. 
\par These two justifications once again show that what we really use is (the proof of) the intersection statement for the dual parameters, as explained in the previous remark.
\end{remark}

\section{Variants of stochastic domination and FKG inequality}
\label{section general theory of domination}
Motivated by the fact that the energy field of the Ising model is not stochastically monotone with respect to the temperature, we introduce, in this section, two different versions of stochastic domination. We will see afterwards, in Section \ref{section Ising's energy field and fk hat}, that, in fact, the energy field of the Ising model is monotone in the temperature in these weaker senses in a very precise way.

The main idea of these weaker variants of stochastic domination is to hide the anomalies that do not allow the domination via the intersection with an independent Bernoulli percolation. Thus, to simplify the notation, for any probability measure $\mu$ and parameter $p\in[0,1]^E$, we write $\mu_p=\mu\cap\ber{p}$. We also denote by $\mu^c$ the law of $\omega^c$ where $\omega\sim\mu$ and we call it the \emph{dual measure} of $\mu$.

In addition, some specific events in this section play a key role. For $F\subset E$, we denote by $\forall_F$ and $\A_F$ the events that all edges $e\in F$ are open and closed, respectively, for the percolation configuration $\omega\in\{0,1\}^E$. More precisely
\begin{align*}
\forall_F:=\{\omega\ |\ F\subset\omega\}\quad \text{ and }\quad
\A_F:=\{\omega \ |\ \omega\cap F=\emptyset\}.
\end{align*}
The notation $\forall_F$ comes from the fact that we ask \emph{all} edges in $F$ to be open, and the notation $\A$ is simply the $\pi$-rotation of $\forall$. Keeping in mind that the events $\forall_F$ and $\A_F$ are somehow dual of each other, one sees that \begin{equation}\label{eq. duality of events measures}
	\mu^c(\A_F)=\mu(\forall_F).
\end{equation}

We start this section by introducing and discussing two variants of stochastic domination. Then, we discuss the weakening of the FKG properties in this context. We end by discussing a natural strengthening of the stochastic domination.

\par

\subsection{Weakening of stochastic domination}

As stated before, we introduce two notions of stochastic domination. The weaker one is called \textit{weak stochastic domination}, and the stronger one is called \textit{weak$^\dagger$ stochastic domination}.

\subsubsection{Weak stochastic domination.} \label{sss.weak_stochastic_domination} 
Let us start by introducing the weakest form of stochastic domination. 
\begin{defi}Take two probability measures $\mu,\nu$ on $\{0,1\}^E$ and $p\in (0,1]^{E}$. We say that the measure $\mu$ is $p$--dominated by $\nu$ if $\mu_p\domi\nu_p$. We denote this relationship $\mu\domi_p\nu$. Furthermore, we say that $\mu$ is weakly dominated by $\nu$ if there exists a parameter $p\in (0,1]^E$ such that $\mu$ is $p$--dominated by $\nu$.
\end{defi}

\par As a first observation of this new notion, let us note that if two probability measures $\mu,\nu$ on $\{0,1\}^E$ are stochastically ordered, that is $\mu \domi \nu$, then for any parameter $p \in (0,1]^E$ one also has $\mu_p \domi \nu_p$. The converse, however, fails in general: the existence of a parameter $p$ with strictly positive coordinates such that $\mu_p \domi \nu_p$ does \emph{not} imply that $\mu \domi \nu$\footnote{An explicit counterexample is provided, e.g., in the proof of Theorem \ref{theorem weak stochastic monotonicity for Ising introduction}, Subsection \ref{section above p(J)}}. Consequently, this notion is strictly weaker than classical stochastic domination, and becomes weaker as $p$ decreases.
\par The reason we are interested in this definition is that we want to find the best parameter $p$ for which the energy field of the Ising model (defined in the Introduction and in Section \ref{section Ising's energy field and fk hat}) is $p$--monotone, in other words, the largest $p$ such that $\en_{J,p}$ is monotone in $J$.

We now list elementary properties of the weak stochastic domination.
\begin{proposition}\label{proposition weak stochastic domination}
Let $\mu,\nu$ be probability measures on $\{0,1\}^E$. \begin{enumerate}[(i)]
\item if $p\le \tilde{p}$ are two parameters, and $\mu\domi_{\tilde{p}}\nu$, then $\mu\domi_{p}\nu$. 
\item if $\mu$ is weakly dominated by $\nu$, then $\mu(\forall_F)\le\nu(\forall_F)$ for all $F\subset E$.
\item if for any $F\subset E$, $\mu(\forall_F)\le\nu(\forall_F)$, and the inequality is strict for any nonempty $F$, then $\mu$ is weakly dominated by $\nu$.
\end{enumerate}
\end{proposition}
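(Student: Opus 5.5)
For (i), the plan is to write the thinning by $p$ as a thinning by $\tilde p$ followed by an independent further thinning. Since $p\le\tilde p$ edgewise, $\ber{p}=\ber{\tilde p}\cap\ber{p/\tilde p}$, and hence $\mu_p=\mu_{\tilde p}\cap\ber{p/\tilde p}$, and likewise for $\nu$. Stochastic domination is preserved by intersecting both sides with the same independent configuration. Indeed, take a monotone coupling $\omega_1\le\omega_2$ of $\mu_{\tilde p}$ and $\nu_{\tilde p}$, and an independent $\eta\sim\ber{p/\tilde p}$; then $\omega_1\cap\eta\le\omega_2\cap\eta$. This proves (i).

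For (ii), I use the identity $\mu_p(\forall_F)=\mu(\forall_F)\prod_{e\in F}p_e$, which holds because the thinning is independent of $\omega$. The event $\forall_F$ is increasing, so $\mu_p\domi\nu_p$ gives $\mu(\forall_F)\prod_{F}p_e\le\nu(\forall_F)\prod_F p_e$. Since every $p_e>0$, we can divide by the product.

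For (iii), I take a constant parameter $p\equiv\varepsilon$ and show that $\mu_\varepsilon(A)\le\nu_\varepsilon(A)$ for every increasing event $A$ once $\varepsilon$ is small. There are finitely many increasing events, so taking the minimum of the resulting thresholds gives a single $\varepsilon>0$ that works for all of them.

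\begin{itemize}
\item Expand the indicator by Möbius inversion on the Boolean lattice:
\[
\mathbf 1_A=\sum_{F\subset E}c_F\,\mathbf 1_{\forall_F},\qquad c_F=\sum_{G\subset F}(-1)^{|F\setminus G|}\mathbf 1_A(G).
\]
\item Combining this with the identity from (ii) gives
\[
\nu_\varepsilon(A)-\mu_\varepsilon(A)=\sum_{F\neq\emptyset}c_F\,\varepsilon^{|F|}\bigl(\nu(\forall_F)-\mu(\forall_F)\bigr),
\]
which is a polynomial in $\varepsilon$. The $F=\emptyset$ term drops out because both sides are probability measures.
\item The trivial cases are handled separately. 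If $A=\emptyset$, there is nothing to prove. If $\emptyset\in A$, then $A$ is the whole space since it is increasing, and both probabilities equal $1$.
\item Otherwise, let $m\ge1$ be the smallest cardinality of a minimal element of $A$. The key combinatorial check is the following. If $|F|<m$, then no subset of $F$ lies in $A$, so $c_F=0$. If $|F|=m$ and $F\in A$, then $F$ is minimal and all its proper subsets lie outside $A$, so $c_F=1$. If $|F|=m$ and $F\notin A$, then $c_F=0$.
\item Therefore the lowest-order term of the polynomial is $\varepsilon^m\sum_{F\in A,\,|F|=m}\bigl(\nu(\forall_F)-\mu(\forall_F)\bigr)$. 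This sum has at least one term, and each term is strictly positive by hypothesis. Hence the polynomial is positive for all sufficiently small $\varepsilon>0$.
\end{itemize}

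The main obstacle is this last step: pinning down the Möbius coefficients $c_F$ at the lowest degree so as to know the sign of the leading term. Everything else is bookkeeping. It is exactly here that the strictness assumption in (iii) is needed, since it is what makes the leading coefficient strictly positive rather than merely nonnegative.
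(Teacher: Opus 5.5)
Your proof is correct and follows the paper's own argument. Parts (i) and (ii) are identical to the paper's. Part (iii) is the same leading-order comparison as $p\searrow 0$ over minimal witnesses of smallest cardinality $m$; your M\"obius-inversion computation of the coefficients $c_F$ simply makes explicit the expansion $\mu_p(A)=p^m\sum_{F}\mu(\forall_F)+o(p^m)$ that the paper states without derivation.
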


\begin{proof}
First, note that (i) follows as $\mu_{p}=\mu_{\tilde{p}}\cap\ber{p/\tilde{p}}$ and the same is true for $\nu_{p}$, whence the result. To prove (ii), we use that $\mu_p(\forall_F)=\mu(\forall_F)\prod_{e\in F}p_e$ and that the analogue is true for $\nu_p$. The result follows by dividing by $\prod_{e\in F}p_e>0$.

Finally, to prove (iii), we need to find $p\in (0,1)^E$ such that for any increasing event $A$, $\mu_p(A)\le\nu_p(A)$. As $\mathcal P(E)$ is finite, it is enough to show that for all increasing sets $A$ there is a constant parameter $p>0$ such that $\mu_p(A)\leq \nu_p(A)$.

If $A$ is either $\forall_\emptyset$ or $\forall_E$, the conclusion is part of the assumption. In the other cases, $A$ can be written as \begin{align*}
A = \bigcup_F \forall_F
\end{align*}
where $F$ ranges over the set of minimal (for inclusion) witnesses of $A$. Define $m=m(A)$ as the minimal number of edges that such a witness can have. One can see that as $p\searrow 0$ \begin{equation}\label{equation mup(A) for small p}
\mu_p(A)=p^m \sum_{F}\mu(\forall_F) +o(p^m)
\end{equation} where the sum ranges over all witnesses $F$ of cardinality $m$. A similar decomposition holds for $\nu_p(A)$ and by assumption, since the first non-zero order term is strictly smaller for $\mu$ than for $\nu$, one has $\mu_p(A)<\nu_p(A)$ for all $p>0$ sufficiently small.
\end{proof}

\begin{remark}\label{remark counter example reciprocal weak domination}
If the inequalities $\mu(\forall_F)\le\nu(\forall_F)$ in the third item are not required to be strict, the implication of weak domination is actually false as shown by the following example: take $E=\{1,2\}$, $ \mu=\ber{1/2}$ and $\nu=\frac12\delta_{(0,0)}+\frac12\delta_{(1,1)}$, the measure where one tosses one fair coin to decide if all edges are open or closed. One can check that $\mu(\forall_F)\le\nu(\forall_F)$ for any $F\subseteq E$, but for every positive parameter $p$, $\mu_p(A)>\nu_p(A)$ where $A$ is the increasing event $\{\text{at least one edge is open}\}$.
\end{remark}

\subsubsection{Weak$^\dagger$ stochastic domination}\label{sss.weak_dagger_stochastic_domination}
In this section, we introduce a notion of stochastic domination that lies between weak and classical stochastic domination. To do this, it is useful to introduce the notion of dual weak stochastic domination.

\begin{defi}[Dual weak stochastic domination] For probability measures $\mu, \nu$ on $\{0,1\}^E$ and a parameter $q\in[0,1)^E$, we say that $\nu$ dually $q$--weakly dominates $\mu$, a relation denoted $\mu\domi^c_q \nu$, if $\nu^c \domi_{1-q} \mu^c$. If there exists such a parameter $q$\footnote{Note that we ask that $q_e<1$ for every edge $e$}, we say that $\nu$ dually weakly dominates $\mu$.
\end{defi}

Note that this dual notion of domination may be understood as follows. The relation $\mu\domi^{c}_q\nu$ is equivalent to the domination $\mu\cup\ber{q}\domi\nu\cup\ber{q}$. Note that the second point of Proposition \ref{proposition weak stochastic domination} implies the inequalities $\mu(\A_F)\ge\nu(\A_F)$ for all $F\subset E$, and the third point implies that in case of strict inequalities for all $\emptyset\neq F\subset E$, we have dual weak domination.

We can now define the concept of weak$^\dagger$ stochastic domination.

\begin{defi}[Weak$^\dagger$ stochastic domination]\label{d.weak_dagger_stochastic_domination}If $\mu,\nu$ are two probability measures on $\{0,1\}^E$, we say that $\nu$ weakly$^\dagger$ dominates $\mu$ if there are parameters $p\in (0,1]^E$ and $q\in[0,1)^E$ such that $\mu\domi_p \nu$ and $\mu\domi^c_q \nu$. We denote this relation $\mu\domi^\dagger\nu$, and we add $p$ and $q$ as sub-indices if needed.
\end{defi}

The reason why we introduce this concept is the following: returning to the situation where we do have a weak domination between measures $\mu,\nu$ and we are interested in the largest parameter for which stochastic domination holds, we can also investigate the largest parameter $p$ for which $\mu_p\domi^\dagger\nu_p$, a weaker property, that might give rise to a different and interesting threshold. This is the case when $\mu$ and $\nu$ are the energy fields of the Ising model at different temperatures as we will see in Section \ref{section Ising's energy field and fk hat}.

Let us now check\footnote{The proof we present for this proposition was found in a discussion with Gemini Pro version 3.1} that the weak$^\dagger$ domination also satisfies the first point of Proposition \ref{proposition weak stochastic domination}.
\begin{proposition}\label{proposition weak * domination stable}
Let $\mu,\nu$ be probability measures on $\{0,1\}^E$ such that $\mu\domi^{\dagger}_{p,q}\nu$. Then, for any parameter $r\in (0,1]^E$, $\mu_r\domi^{\dagger}_{p',q'}\nu_r$, with $p',q'$ defined by $p'=1\wedge (p/r)$ and $q'=qr'$. Here, $r'=r/(1-q(1-r))$.
\par In particular, weak$^\dagger$ domination is stable under intersection with $\ber{r}$.
\end{proposition}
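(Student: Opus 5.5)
The plan is to treat the two halves of the relation $\domi^\dagger_{p',q'}$ separately. In each half, the goal is to write the relevant modified version of $\mu_r$ (and of $\nu_r$) as the image of $\mu_p$ (resp.\ $\mu\cup\ber{q}$) under one and the same independent, monotone random operation. Throughout I will use one basic fact: if $\alpha\domi\beta$ and $\Phi$ is a random map on $\{0,1\}^E$ that is independent of the configuration and nondecreasing pointwise, then the law of $\Phi(\omega)$ under $\alpha$ is dominated by its law under $\beta$. This is proved by coupling $\omega\le\omega'$ monotonically and applying the same realisation of $\Phi$ to both. Intersection with, or union with, an independent Bernoulli configuration is such a map. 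Everything is edgewise and independent across edges, so all identities below are checked edge by edge.

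\textbf{Primal part.} We have $\mu_r\cap\ber{p'}=\mu\cap\ber{rp'}$ with $rp'=r\wedge p\le p$ edgewise. Hence
\[
\mu_r\cap\ber{p'}=\mu_p\cap\ber{(r\wedge p)/p},
\]
and the same holds for $\nu$. Since $\mu_p\domi\nu_p$ by hypothesis, intersecting both sides with the common independent $\ber{(r\wedge p)/p}$ gives $\mu_r\domi_{p'}\nu_r$. Also $p'\in(0,1]^E$, as required.

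\textbf{Dual part.} We need $\mu_r\cup\ber{q'}\domi\nu_r\cup\ber{q'}$. The idea is to realise $(\omega\cap B_r)\cup B_{q'}$ in law as $(\omega\cup B_q)\cap B_s$ for a suitable $s\in(0,1]^E$, with all the $B$'s independent Bernoulli configurations. Because of independence, it suffices to match the conditional probability that $e$ is open given $\omega_e$.
\begin{itemize}
\item If $\omega_e=1$: the left side gives $r+(1-r)q'$ and the right side gives $s$.
\item If $\omega_e=0$: the left side gives $q'$ and the right side gives $qs$.
\end{itemize}
Solving $q'=qs$ and $s=r+(1-r)qs$ gives $s=r/(1-q(1-r))=r'$ and $q'=qr'$, which are exactly the parameters in the statement. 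I still need to check the ranges. We have $r'\le1$, because this is equivalent to $q(1-r)\le 1-r$. Consequently $q'=qr'\le q<1$, so $q'\in[0,1)^E$ is admissible.

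Then $\mu_r\cup\ber{q'}=(\mu\cup\ber{q})\cap\ber{r'}$, and the same holds for $\nu$. The hypothesis $\mu\domi^c_q\nu$, i.e.\ $\mu\cup\ber q\domi\nu\cup\ber q$, together with the fact above applied to intersection with $\ber{r'}$, gives the dual domination.

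The only step needing any thought is finding this algebraic identity that commutes the thinning past the sprinkling. Once the edgewise matching is set up, it reduces to solving two linear equations, and these reproduce $r'$ and $q'$ exactly. The final sentence of the statement, stability under intersection with $\ber r$, follows immediately from the two parts.
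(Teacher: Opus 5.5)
Your proposal is correct and follows essentially the same route as the paper. The primal part rests on $rp'\le p$ and monotonicity of $p$--domination in $p$. The dual part rests on the edgewise identity $(\mu\cap\ber{r})\cup\ber{q'}=(\mu\cup\ber{q})\cap\ber{r'}$, checked conditionally on the configuration; your derivation of $r'$ and your check that $r'\le 1$ and $q'<1$ are small additions the paper leaves implicit.
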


\begin{proof}
First, it follows directly that $\mu\domi_{p'r} \nu$, i.e. $\mu_r\domi_{p'}\nu_r$ by monotonicity (first bullet point of Proposition \ref{proposition weak stochastic domination}, since $p'r\le p$). To show that $\mu_{r}\domi^{c}_{q'} \nu_r$, we need the following claim.
	\begin{claim}\label{claim union intersection} 
		With the notation of the proposition, we have the following equality of measures:
		\begin{equation}\label{eq union intersection}
			\big(\mu\cap\ber{r}\big)\cup\ber{q'} = \big(\mu\cup\ber{q}\big)\cap\ber{r'}.
		\end{equation}
	\end{claim}
	Let us first see how the claim implies that $\mu_{r}\domi^c_{q'} \nu_r$. This follows from computing
	\begin{align*}
		\mu_r \cup \ber{q'} = (\mu \cup \ber{q}) \cap \ber{r'}\domi (\nu \cup \ber{q}) \cap \ber{r'} = \nu_r \cup \ber{q'}.
	\end{align*}
	To finish this proof, we now only need to prove the claim.	
	\begin{proof}[Proof of Claim \ref{claim union intersection}]
		As we can first condition on $\xi\sim \mu$, it is enough to prove the claim for $\mu=\delta_{\xi}$ for every $\xi\in \{0,1\}^E$. Let us work in this case and define $\eta_s\sim\ber{s}$ for $s\in\{r,q,r',q'\}$ all independent. We distinguish two cases.
		\begin{itemize}
			\item If $e\in \xi$, note that $r'= 1-(1-r)(1-q')$ and let us compute 
			\begin{align*}
				&\P(e\in (\xi\cap \eta_r)\cup \eta_{q'})=1-(1-r_e)(1-q_e')=r_e',\\
				&\P(e\in (\xi\cup \eta_q)\cap \eta_{r'})=  r'_e.
			\end{align*}
		\end{itemize}
(For the first equation, recall that $q_e'=r_e'q_e$.)
		\begin{itemize}
			\item If $e\notin \xi$, we now have
			\begin{align*}
				&\P(e\in (\xi\cap \eta_r)\cup \eta_{q'})=q_e',\\
				&\P(e\in (\xi\cup \eta_q)\cap \eta_{r'})=  q_er'_e=q_e'.
			\end{align*}
		\end{itemize}
		Since the coordinate laws agree for every $e\in E$ and the coordinates are independent, we conclude.
	\end{proof}
	
\end{proof}

We have just seen that the property of the dual weak domination is stable under taking intersection; not surprisingly, a similar result holds for the almost-equivalent-but-weaker condition that $\mu(\A_F)\ge\nu(\A_F)$ for all $F\subset E$. Such a result will be useful in Section \ref{section weak * below edag}, where we show that this weaker property implies dual weak domination.

\begin{proposition}\label{proposition strict inequalities below large inequalities}
Let $\mu,\nu$ be measures such that $\mu(\A_F)\ge\nu(\A_F)$ for all $F\subset E$. Then $\mu_p(\A_F)\ge\nu_p(\A_F)$ for every parameter $p$. Furthermore, if the inequality is strict for singletons, we have strict inequalities $\mu_p(\A_F) > \nu_p(\A_F)$ for any parameter $p<1$ and non-empty subset $F$; in particular, $(\mu_p)^c$ weakly dominates $(\nu_p)^c$.
\end{proposition}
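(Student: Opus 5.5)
The plan is to write $\mu_p(\A_F)$ as an explicit nonnegative combination of the quantities $\mu(\A_G)$, $G\subset F$, with coefficients that depend only on $p$ and $F$. For $\omega\sim\mu$ and an independent $\eta\sim\ber{p}$, the edges of $F$ are all closed in $\omega\cap\eta$ exactly when $\omega\cap F\subset F\setminus\eta$. Conditioning on the random set $S:=F\cap\eta$ (so the requirement is $\omega\cap S=\emptyset$) gives
\begin{align*}
\mu_p(\A_F)=\sum_{S\subset F}\Big(\prod_{e\in S}p_e\prod_{e\in F\setminus S}(1-p_e)\Big)\,\mu(\A_S).
\end{align*}
The same identity holds for $\nu$ with identical weights. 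All weights are nonnegative, so the hypothesis $\mu(\A_S)\ge\nu(\A_S)$ for every $S$ gives $\mu_p(\A_F)\ge\nu_p(\A_F)$ term by term, for every parameter $p$.

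For the strict statement, fix $p<1$ pointwise and a nonempty $F$. The difference $\mu_p(\A_F)-\nu_p(\A_F)$ is a sum of nonnegative terms, so it suffices to exhibit one strictly positive term. I would take $S=\{e\}$ for some $e\in F$. Its weight is $p_e\prod_{f\in F\setminus\{e\}}(1-p_f)$. The factors $1-p_f$ are positive because $p<1$, and $p_e>0$ if, as elsewhere in this section, parameters lie in $(0,1]^E$. The strictness for singletons then gives $\mu(\A_{\{e\}})>\nu(\A_{\{e\}})$, so this term is strictly positive.

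The delicate point is the positivity of the weight, which needs both $p_e>0$ and $p_f<1$; this is exactly why the statement assumes $p<1$. If some $p_e=0$ were allowed, that edge would be a.s. closed under both $\mu_p$ and $\nu_p$, and the singleton term would vanish. Note also that the $S=\emptyset$ term is zero, since $\mu(\A_\emptyset)=\nu(\A_\emptyset)=1$, so singletons are genuinely needed.

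For the final assertion, I would use the duality relation \eqref{eq. duality of events measures}: $(\mu_p)^c(\forall_F)=\mu_p(\A_F)$. The strict inequalities therefore become $(\nu_p)^c(\forall_F)<(\mu_p)^c(\forall_F)$ for every nonempty $F$, with equality for $F=\emptyset$. Proposition \ref{proposition weak stochastic domination}(iii), applied with the roles of $\mu$ and $\nu$ played by $(\nu_p)^c$ and $(\mu_p)^c$, then shows that $(\nu_p)^c$ is weakly dominated by $(\mu_p)^c$, which is the stated conclusion read in that direction. By definition, this is the dual weak domination $\mu_p\domi^c_q\nu_p$ for some $q<1$.
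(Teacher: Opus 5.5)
Your proof is correct and is essentially the paper's argument: the same conditioning identity $\mu_p(\A_F)=\sum_{S\subset F}\ber{p}(F\cap\eta=S)\,\mu(\A_S)$, a term-by-term comparison, and the singleton term $S=\{e\}$ to get strictness. Your observation that this term also needs $p_e>0$ (not only $p<1$) makes explicit an assumption the paper leaves implicit. Your use of \eqref{eq. duality of events measures} together with Proposition \ref{proposition weak stochastic domination}(iii) correctly fills in the final ``in particular''.
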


\begin{proof}
Indeed, upon conditioning on $\omega\sim\ber{p}$ first, one sees that \begin{equation}\label{equation A_F pour mu_p}
\mu_p(\A_F)=\sum_{H\subset F}\ber{p}(F\cap\omega = H)\mu(\A_H).
\end{equation}
An analogous equation holding for $\nu$ allows us to deduce the inequality $\mu_p(\A_F)\ge\nu_p(\A_F)$.
\par For the second part, choose $e\in F$ and use the strict inequality for $\{e\}$. Then, since $p<1$, the probability $\ber{p}(F\cap\omega=\{e\})$ is nonzero and therefore equality cannot hold.
\end{proof}

\subsection{Weak forms of the FKG inequality}
\label{subsection on FKG}

We now turn to weak forms of the FKG inequality. The notions from the previous subsection admit natural analogues in the setting of a single measure satisfying a weak FKG condition, rather than two measures connected by stochastic domination.

\subsubsection{Weak FKG} We start by noting that if $\mu$ satisfies the FKG inequality, then $\mu_p$ still has it (see Proposition \ref{proposition weak FKG}), but the converse is not true. This motivates the analogous definition:

\begin{defi}[Weak FKG inequality]Take $p\in (0,1]^E$. We say that a probability measure $\mu$ satisfies the $p$--weak FKG inequality if $\mu_p$ satisfies the classical FKG inequality, that is to say, for all $f,g$ increasing functions
\begin{align*}
	\mu_p(fg)\geq \mu_p(f)\mu_p(g).
\end{align*}
Again, we say that $\mu$ satisfies the weak FKG inequality if there is some parameter $p\in(0,1]^E$ such that $\mu$ satisfies the $p$--weak FKG inequality.
\end{defi}

In this context, we have the following analogue of Proposition \ref{proposition weak stochastic domination} in the setting of weak FKG.
\begin{proposition}\label{proposition weak FKG}
Let $\mu$ be a measure on $\{0,1\}^E$, and let $p\in[0,1]^E$ be a parameter.  \begin{enumerate}[(i)]
\item if $\mu$ satisfies the FKG inequality, then $\mu_p$ also satisfies the FKG property. 
\item if $\mu$ satisfies the weak FKG, then $\mu(\forall_{F\cup {F'}})\ge\mu(\forall_F)\mu(\forall_{F'})$ for all $F,{F'}\subset E$. 
\item if for any $F,{F'}\subset E$, $\mu(\forall_{F\cup {F'}})\ge\mu(\forall_F)\mu(\forall_{F'})$, and the inequality is strict for any nonempty $F,{F'}$, then $\mu$ has the weak FKG property.
\end{enumerate}
\end{proposition}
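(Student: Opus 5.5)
For (i), I would use the standard fact that a product measure satisfies FKG and that the product of two independent FKG measures is FKG on the product space. The relevant product space is $\{0,1\}^E\times\{0,1\}^E$, carrying $\mu\otimes\ber{p}$, and the map $(\omega,\eta)\mapsto\omega\cap\eta$ is increasing. Take $f,g$ increasing on $\{0,1\}^E$. Then $f(\omega\cap\eta)$ and $g(\omega\cap\eta)$ are increasing in $(\omega,\eta)$. Conditioning on $\eta$ and applying FKG for $\mu$ gives $\mu_p(fg)\ge \E_\eta[F(\eta)G(\eta)]$, where $F(\eta)=\int f(\omega\cap\eta)\,d\mu(\omega)$ and $G$ is defined similarly. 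Both $F$ and $G$ are increasing in $\eta$. Harris's inequality for $\ber{p}$ then yields $\E_\eta[FG]\ge \E_\eta[F]\E_\eta[G]=\mu_p(f)\mu_p(g)$.

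For (ii), I would apply the $p$--FKG inequality to the increasing indicators $\mathbf 1_{\forall_F}$ and $\mathbf 1_{\forall_{F'}}$. This gives $\mu_p(\forall_{F\cup F'})\ge\mu_p(\forall_F)\mu_p(\forall_{F'})$. Next I would use $\mu_p(\forall_H)=\mu(\forall_H)\prod_{e\in H}p_e$, which rewrites the inequality as
\begin{align*}
\mu(\forall_{F\cup F'})\prod_{e\in F\cup F'}p_e\ \ge\ \mu(\forall_F)\mu(\forall_{F'})\prod_{e\in F}p_e\prod_{e\in F'}p_e .
\end{align*}
Since $\prod_{F}\prod_{F'}=\prod_{F\cup F'}\prod_{F\cap F'}$ and $p_e\le 1$, I can divide by $\prod_{F\cup F'}p_e>0$ to get $\mu(\forall_{F\cup F'})\ge \mu(\forall_F)\mu(\forall_{F'})\prod_{F\cap F'}p_e$. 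This is weaker than the claim when $F\cap F'\neq\emptyset$, so I would need to reduce to the disjoint case. That reduction is not immediate, so I would first try to exploit the freedom in choosing the events. Because I know nothing about $\mu$ restricted to $F\cap F'$, the honest route is to test FKG not on $\forall_F,\forall_{F'}$ but on suitable increasing events of $\mu_p$ whose leading order in $p$ recovers the claim. If that fails, I would reread the statement as intended for $F\cap F'=\emptyset$, where the computation above is exact.

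For (iii), I would mirror Proposition \ref{proposition weak stochastic domination}(iii) and take $p$ constant and small. By finiteness it suffices to handle each pair of increasing events $A,B$, since FKG for indicators of increasing events implies it for increasing functions. Write $A=\bigcup_F\forall_F$ over its minimal witnesses, and let $m(A)$ be the minimal witness size. As in \eqref{equation mup(A) for small p}, $\mu_p(A)=p^{m(A)}\sum_{|F|=m(A)}\mu(\forall_F)+o(p^{m(A)})$, and likewise for $B$. For $A\cap B$, the witnesses are unions $F\cup F'$. I would compare the leading term of $\mu_p(A\cap B)$, which is of order $p^{m(A\cap B)}$ with $m(A\cap B)\le m(A)+m(B)$, against the product, which is of order $p^{m(A)+m(B)}$. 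If $m(A\cap B)<m(A)+m(B)$, the left side wins for small $p$. Otherwise the minimal witnesses of $A\cap B$ are exactly the disjoint unions $F\sqcup F'$ with $|F|=m(A)$ and $|F'|=m(B)$, and the strict inequalities $\mu(\forall_{F\sqcup F'})>\mu(\forall_F)\mu(\forall_{F'})$ give strict domination of the leading coefficients. The trivial cases where $A$ or $B$ is $\emptyset$ or everything are immediate.

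The main obstacle is the bookkeeping in this last equality case. One has to check that the coefficient of $p^{m(A)+m(B)}$ in $\mu_p(A\cap B)$ is $\sum\mu(\forall_{F\cup F'})$ over exactly the pairs producing the product expansion, with distinct pairs giving distinct unions, or at least with the multiplicities handled carefully. I would count via inclusion--exclusion at leading order to make this precise.
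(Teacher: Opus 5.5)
Part (i) is correct and is the paper's argument: $\mu\otimes\ber{p}$ is FKG, and you push it forward by the increasing map $(\omega,\eta)\mapsto\omega\cap\eta$. You just unpack the product step by conditioning and Harris.

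The genuine gap is in (ii). Your computation settles disjoint $F,F'$. For overlapping sets you call the reduction "not immediate", then either search vaguely for other test events or fall back to the disjoint case, which would prove a weaker statement than the one claimed. Shrinking $p$ cannot remove the factor $\prod_{e\in F\cap F'}p_e$ either: $p$--weak FKG only passes to smaller parameters, and that makes the factor worse. The observation you are missing is that $H\mapsto\mu(\forall_H)$ is non-increasing for inclusion. Apply your disjoint computation to $F$ and $F'':=F'\setminus F$. Since $F\cup F'=F\sqcup F''$ and $\forall_{F'}\subseteq\forall_{F''}$, you get
\begin{align*}
\mu(\forall_{F\cup F'})=\mu(\forall_{F\sqcup F''})\ge\mu(\forall_F)\mu(\forall_{F''})\ge\mu(\forall_F)\mu(\forall_{F'}).
\end{align*}
This is exactly the paper's reduction.

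In (iii) your structure matches the paper's, but one claim is false. You say that when $m(A\cap B)=m(A)+m(B)$, the minimum-size witnesses of $A\cap B$ are exactly the disjoint unions $F\sqcup F'$ of minimum-size witnesses of $A$ and $B$. Counterexample: take $A=\forall_{\{1\}}\cup\forall_{\{2,3\}}$ and $B=\forall_{\{3\}}$. Then $m(A)=m(B)=1$ and $m(A\cap B)=2$, yet $\{2,3\}$ is a minimum-size witness of $A\cap B$ not of that form.

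This does not break the argument, because you only need a lower bound on $\mu_p(A\cap B)$; no inclusion--exclusion bookkeeping is required. Write $M_A,M_B,M_{A\cap B}$ for the sets of minimum-size witnesses. In this case every $F\in M_A$ is disjoint from every $F'\in M_B$, since otherwise $F\cup F'$ would be a smaller witness. Hence $(F,F')\mapsto F\sqcup F'$ is injective from $M_A\times M_B$ into $M_{A\cap B}$. Indeed, if $F_1\sqcup F_1'=F_2\sqcup F_2'$, then $F_1\cap F_2'=\emptyset$ forces $F_1\subseteq F_2$; by symmetry $F_1=F_2$, and then $F_1'=F_2'$. Dropping the remaining nonnegative terms gives
\begin{align*}
\sum_{H\in M_{A\cap B}}\mu(\forall_H)\ \ge\sum_{(F,F')\in M_A\times M_B}\mu(\forall_{F\sqcup F'})\ >\ \sum_{F\in M_A}\mu(\forall_F)\sum_{F'\in M_B}\mu(\forall_{F'}),
\end{align*}
which is precisely the paper's argument.
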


The proof shares many similarities with that of Proposition \ref{proposition weak stochastic domination}.

\begin{proof}
We start by proving (i). First, note that the product measure $\mu \otimes \ber{p}$ satisfies the FKG inequality, as it is a product of two measures satisfying it. We conclude by using that the pushforward through an increasing map of a measure that has the FKG property also satisfies the FKG property.

To prove (ii), it is enough to consider disjoint $F,{F'}$: in the general case, replacing $F'$ by $F'\setminus(F\cap F')$ and applying the disjoint case yields a stronger statement. Now, for disjoint sets $F,{F'}$, we have that for all $p\in (0,1]^E$ \begin{align*}
\mu(\forall_{F\cup {F'}})\ge\mu(\forall_F)\mu(\forall_{F'}) \iff \mu_p(\forall_{F\cup {F'}})\ge\mu_p(\forall_F)\mu_p(\forall_{F'}).
\end{align*}
Thus, if there is $p\in (0,1]^E$ such that $\mu_p$ satisfies the FKG property, these inequalities are true for $\mu_p$ whence for $\mu$.

\medskip For the third point, as in the proof of Proposition \ref{proposition weak stochastic domination}, it is enough to show that for any two nontrivial increasing events $A,B$, one has $\mu_p(A\cap B)-\mu_p(A)\mu_p(B)>0$ for some constant parameter $p\in(0,1]$. To this end, we make use of equation \eqref{equation mup(A) for small p}. Let $k,\ell$, and $m$ be the minimum cardinalities of witnesses for $A$, $B$, and $A\cap B$, respectively. Note that $m\leq k+\ell$.

Let us separate by cases. When $m<k+\ell$, we have $\mu_p(A)\mu_p(B)\ll\mu_p(A\cap B)$ for $p \ll 1$. Thus the result is true in this case.

We are left with the case $m=k+\ell$. In this case, let $M_A$, $M_B$, and $M_{A\cap B}$ denote the sets of witnesses of minimal cardinality for $A$, $B$, and $A\cap B$, respectively. Note that the equality implies that for all $F\in M_A$ and ${F'}\in M_B$, we have $F\cap {F'}=\emptyset$. Consequently, the map 
\begin{align*}
	M_A\times M_B\xrightarrow{F,{F'}\mapsto F\cup {F'}}M_{A\cap B}
\end{align*} 
is an injection\footnote{Beware, it is \emph{not} true that it is a bijection in the general case; however, we do not need it.}. Indeed, if $F_1\cup F'_1=F_2\cup F'_2$ with $F_i\in M_A$ and $F'_i\in M_B$, then $F_1\cap F'_2=\emptyset$ implies $F_1\subseteq F_2$; by symmetry $F_1=F_2$, and then $F'_1=F'_2$. We can then write
\begin{align*}
\mu_p(A\cap B) &= p^{m}\sum_{H\in M_{A\cap B}}\mu(\forall_H) + o(p^m)\\
&\ge p^{k+\ell}\sum_{\substack{F \in M_A\\{F'}\in M_B}}\mu(\forall_F\cap \forall_{F'})+o(p^m)\\
&>p^k\sum_{F\in M_A}\mu(\forall_F)\times p^{\ell}\sum_{{F'}\in M_B}\mu(\forall_{F'}) + o(p^m) = \mu_p(A)\mu_p(B) +o(p^m).
\end{align*}
We conclude by using the strict inequality and choosing $p>0$ small enough.
\end{proof}

\begin{remark}\label{remark counter example reciprocal weak FKG}
As explained in Remark \ref{remark counter example reciprocal weak domination} for weak domination, strict inequalities in the third item are needed to ensure weak FKG. Indeed, if $E=\{1,2,3\}$ and $\mu$ is the law of $\ber{1/2}$ conditioned on $\{\omega_1+\omega_2+\omega_3\text{ odd}\}$, then one can check that $\mu$ satisfies the large inequalities in the third item, but for any parameter $p$, $\mu_p(A\cap\forall_{\{3\}})<\mu_p(A)\mu_p(\forall_{\{3\}})$ for the increasing event $A=\{\omega_1+\omega_2>0\}$. 
\end{remark}

%In a  this section, we introduce a notion of stochastic domination that lies between weak and classical stochastic domination. To do this, it is useful to introduce the notion of dual weak stochastic domination.

%\begin{defi}[Dual weak stochastic domination] For probability measures $\mu, \nu$ on $\{0,1\}^E$ and a parameter $q\in[0,1)^E$, we say that $\nu$ dually $q$--weakly dominates $\mu$, a relation denoted $\mu\domi^c_q \nu$, if $\nu^c \domi_{1-q} \mu^c$. If there exists such a parameter $q$\footnote{Note that we ask that $q_e<1$ for every edge $e$}, we say that $\nu$ dually weakly dominates $\nu$. \end{defi}

%Note that this dual notion of domination may be understood as follows. The relation $\mu\domi^{c}_q\nu$ is equivalent to the domination $\mu\cup\ber{q}\domi\nu\cup\ber{q}$. Note that the second point of Proposition \ref{proposition weak stochastic domination} implies the inequalities $\mu(\A_F)\ge\nu(\A_F)$ for all $F\subset E$, and the third point implies that in case of strict inequalities for all $\emptyset\neq F\subset E$, we have dual weak domination.

%We can now define the concept of weak$^\dagger$ stochastic domination.

\subsubsection{Weak$^\dagger$ FKG inequality}
\label{sss.weak_dagger_FKG}

Now, as we did for the monotonicity, we introduce a type of FKG inequality which is weaker than the classical version but stronger than the weaker one. As in Subsection \ref{sss.weak_dagger_stochastic_domination}, it is convenient to introduce first the dual weak FKG property.

\begin{defi}[Dual weak FKG]
For a probability measure $\mu$ on $\{0,1\}^E$ and a parameter $q\in[0,1)^E$, we say that $\mu$ has the dual $q$--weak FKG property if $\mu^c$ has the $(1-q)$--weak FKG property. If there exists such a parameter $q$, $\mu$ is said to have the dual weak FKG property.
\end{defi}

As for dual weak domination, this property can be rewritten in terms of the measure $\mu$: $\mu$ has dual $q$--weak FKG if $\mu\cup\ber{q}$ has FKG, or has dual weak FKG if there exists some $q<1$ for which $\mu\cup\ber{q}$ has FKG.

By Proposition \ref{proposition weak FKG}, this property implies $\mu(\A_F\cap\A_{F'})\ge\mu(\A_F)\mu(\A_{F'})$ for any $F,F'\subset E$ and is implied by the strictness of these inequalities for all non-empty subsets $F,F'$.

In view of this definition and the analogous notions for weak domination, the weak$^\dagger$ FKG property is defined as follows:

\begin{defi}[Weak$^\dagger$ FKG] For parameters $p\in(0,1]^E$ and $q\in[0,1)^E$, we say that a measure $\mu$ satisfies the $p,q$--weak$^\dagger$ FKG inequality if $\mu_p$ and $(\mu^c)_{1-q}$ both satisfy the FKG inequality. We say that $\mu$ has the weak$^\dagger$ FKG property if there are parameters $p\in(0,1]^E, q\in[0,1)^E$ such that $\mu$ satisfies the $p,q$--weak$^\dagger$ FKG inequality.
\end{defi}

Let us check that the weak$^\dagger$ FKG property is stable under intersection with a Bernoulli measure.
\begin{proposition}[Monotonicity of weak$^\dagger$ FKG property]\label{p.monotonicity_FKG} Assume that there are parameters $p\in(0,1]^E$ and $q\in[0,1)^E$ such that $\mu$ satisfies the $p,q$--weak$^\dagger$ FKG inequality. Then, for any parameter $r\in(0,1]^E$, $\mu_r$ has the $p',q'$--weak$^\dagger$ FKG property, for $p'= p/r\wedge 1$ and $q'=qr/(1-q(1-r))$.  
\par In particular, weak$^\dagger$ FKG is stable under intersection with $\ber{r}$.
\end{proposition}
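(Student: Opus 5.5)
The plan is to follow the proof of Proposition \ref{proposition weak * domination stable} line by line, replacing stochastic domination by the FKG inequality. We need two things: that $(\mu_r)_{p'}$ has FKG, and that $(\mu_r)^c_{1-q'}$ has FKG. The second is equivalent to $\mu_r\cup\ber{q'}$ having FKG, because complementation is a decreasing bijection and therefore preserves the FKG property.

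\emph{Intersection part.} Since $p'r=(p\wedge r)\le p$, we can write
\[
(\mu_r)_{p'}=\mu_{p'r}=\mu_p\cap\ber{p'r/p}.
\]
Here $p'r/p\in(0,1]$. By assumption $\mu_p$ has FKG, so Proposition \ref{proposition weak FKG}(i) shows that $\mu_{p'r}$ has FKG as well. This is exactly the $p'$--weak FKG property for $\mu_r$.

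\emph{Union part.} We use Claim \ref{claim union intersection} with $r'=r/(1-q(1-r))$. The claim gives $q'=qr'$ and
\[
\mu_r\cup\ber{q'}=(\mu\cup\ber{q})\cap\ber{r'}.
\]
The claim was stated for a general measure and proved by conditioning on $\xi\sim\mu$, so it applies verbatim. The hypothesis that $(\mu^c)_{1-q}$ has FKG is equivalent to $\mu\cup\ber{q}$ having FKG, since $\mu\cup\ber q$ is the complement of $\mu^c\cap\ber{1-q}$. Applying Proposition \ref{proposition weak FKG}(i) once more, now to $\mu\cup\ber{q}$ and the parameter $r'$, shows that $(\mu\cup\ber q)\cap\ber{r'}$ has FKG. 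Hence $\mu_r\cup\ber{q'}$ has FKG, which is the dual $q'$--weak FKG property.

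\emph{Checks.} It remains to verify that the parameters are admissible: $p'\in(0,1]$ is clear. For $q'$, the claim already gives $r'=1-(1-r)(1-q')$, so $r'\in(0,1]$; combined with $q<1$ this yields $q'=qr'<1$. There is also the degenerate case $q=0$, which is harmless.

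The only step needing care is the equivalence between FKG for a measure and FKG for its complement. If $f,g$ are increasing, then $f(\omega^c),g(\omega^c)$ are both decreasing, and FKG applied to a pair of decreasing functions gives the same inequality. I do not expect any step to be a genuine obstacle: the whole argument reduces to Claim \ref{claim union intersection} together with Proposition \ref{proposition weak FKG}(i).
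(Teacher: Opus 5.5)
Your proof is correct and follows the paper's argument: the intersection part comes from $p'r\le p$ and stability of FKG under Bernoulli thinning, and the union part uses Claim~\ref{claim union intersection} to reduce to $(\mu\cup\ber{q})\cap\ber{r'}$ having FKG. One small remark: derive $r'\le 1$ directly from $1-q(1-r)-r=(1-q)(1-r)\ge 0$. Deducing it from the identity $r'=1-(1-r)(1-q')$ tacitly presupposes $q'=qr'\le 1$.
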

\begin{proof}
	The proof is very similar to the proof of Proposition \ref{proposition weak * domination stable}, and also relies on Claim \ref{claim union intersection}. First, it follows directly that $\mu_r$ has $p'$--weak FKG since this is equivalent to $\mu$ having $p'r$--weak FKG, which is true since $p'r\le p$. On the other hand, to show that $\mu_r^c$ has $(1-q')$--weak FKG, or equivalently $\mu_r\cup\ber{q'}$ has FKG, we can note that by Claim \ref{claim union intersection} this amounts to showing that $(\mu\cup\ber{q})\cap\ber{r'}$ has FKG, where $r'=r/(1-q(1-r))$. However, the FKG property is stable under taking intersection with the Bernoulli measure $\ber{r'}$; hence the result is true by assumption on $\mu$.
\end{proof}

To conclude this section, we prove an additional monotonicity which will be key in Section \ref{section Ising's energy field and fk hat} and more specifically Subsection \ref{section weak * below edag} to understand the critical parameter for weak$^\dagger$ FKG in the context of the energy field of the Ising model. In order to introduce this inequality, recall that if $\mu$ has the weak FKG property, then for all $F,F'\subseteq E$ and all parameters $p\in [0,1]^E$, $\mu_p(\forall_{F\cup F'})\geq \mu_p(\forall_F)\mu_p(\forall_F')$. We would also like to show that this is true for the events of type $\A_{F}$. This may look like a consequence of Proposition \ref{p.monotonicity_FKG}, however it does not follow from it. 
\begin{proposition}\label{proposition weak * FKG stable}
Let $\mu$ be a measure such that $\mu(\A_F\cap\A_{F'})\ge\mu(\A_F)\mu(\A_{F'})$ for all $F,F'\subset E$. Then $\mu_p(\A_F\cap\A_{F'})\ge\mu_p(\A_F)\mu_p(\A_{F'})$ for every parameter $p$. Moreover, if the inequality is strict for singletons $\{e\},\{e'\}$, for any parameter $p<1$ we have strict inequalities $\mu_p(\A_F\cap\A_{F'})>\mu_p(\A_F)\mu_p(\A_{F'})$ for any non-empty $F,F'$; in particular, $\mu_p^c$ has weak FKG.
\end{proposition}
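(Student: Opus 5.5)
The plan mirrors the proof of Proposition \ref{proposition strict inequalities below large inequalities}: first condition on the Bernoulli configuration, then compare the resulting expressions term by term. Let $\omega\sim\ber{p}$ be independent of the configuration sampled from $\mu$. The event $\A_F$ for $\mu_p$ fails only on edges of $F$ that are open both in $\omega$ and in the $\mu$-configuration. Hence, conditionally on $\omega$, $\A_F$ for $\mu_p$ is exactly $\A_{F\cap\omega}$ for $\mu$. This gives
\begin{align*}
\mu_p(\A_F\cap\A_{F'})=\E_{\omega}\big[\mu(\A_{F\cap\omega}\cap\A_{F'\cap\omega})\big],\qquad \mu_p(\A_F)=\E_\omega\big[\mu(\A_{F\cap\omega})\big],
\end{align*}
and similarly for $F'$.

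By hypothesis, applied to the random sets $F\cap\omega$ and $F'\cap\omega$, we get
\begin{align*}
\mu_p(\A_F\cap\A_{F'})\ge \E_\omega\big[f(\omega)g(\omega)\big], \qquad f(\omega):=\mu(\A_{F\cap\omega}),\quad g(\omega):=\mu(\A_{F'\cap\omega}).
\end{align*}
It then remains to show $\E_\omega[fg]\ge\E_\omega[f]\E_\omega[g]$. Both $f$ and $g$ are \emph{decreasing} functions of $\omega$, since enlarging $\omega$ enlarges $F\cap\omega$ and so shrinks $\A_{F\cap\omega}$. The Harris--FKG inequality for the product measure $\ber{p}$, applied to the increasing functions $-f$ and $-g$, gives the required bound. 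This also handles overlapping $F,F'$ without any extra work, since the hypothesis is assumed for all pairs.

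For the strict part, I track when equality can occur. With $p<1$ every edge is closed in $\omega$ with positive probability, so the event $\{F\cap\omega=\{e\},\ F'\cap\omega=\{e'\}\}$ has positive $\ber{p}$-probability. Here I choose $e\in F$ and $e'\in F'$ as follows: if $F\cap F'\neq\emptyset$, take $e=e'$ in the intersection; otherwise take any $e\in F$, $e'\in F'$. I also need $p>0$ on these edges, which holds since $p\in(0,1]^E$ in the weak setting. On this event the hypothesis holds strictly by the singleton assumption, so the first inequality $\E[\mu(\A\cap\A)]\ge\E[fg]$ becomes strict. The case $e=e'$ requires strictness for the pair $\{e\},\{e\}$, i.e.\ $\mu(\A_{\{e\}})>\mu(\A_{\{e\}})^2$, which I read as being included in ``strict for singletons''.

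Finally, strict inequalities for all non-empty $F,F'$ mean that $\mu_p^c$ satisfies $\mu_p^c(\forall_{F\cup F'})>\mu_p^c(\forall_F)\mu_p^c(\forall_{F'})$, by \eqref{eq. duality of events measures}. Proposition \ref{proposition weak FKG}(iii) then gives that $\mu_p^c$ has weak FKG. The main obstacle is bookkeeping: recognizing that conditioning turns $\A_F$ into $\A_{F\cap\omega}$, and that the leftover covariance has the right sign because $f$ and $g$ are decreasing.
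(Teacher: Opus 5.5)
Your proof is correct and is essentially the paper's argument: condition on the Bernoulli configuration $\omega$, so that $\A_F$ becomes $\A_{F\cap\omega}$, and apply the hypothesis pointwise in $\omega$. The one difference is that the paper first reduces to disjoint $F,F'$, so that $F\cap\omega$ and $F'\cap\omega$ are independent under $\ber{p}$ and the product factorizes exactly; you instead keep overlapping sets and absorb the leftover covariance with Harris's inequality for the decreasing $f,g$, which also handles strictness for nested $F,F'$ directly. Your extra reading $\mu(\A_{\{e\}})>\mu(\A_{\{e\}})^2$ is in fact automatic once $|E|\ge 2$, since strictness for any distinct pair $\{e\},\{e'\}$ already forces $0<\mu(\A_{\{e\}})<1$.
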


\begin{proof}
As in the proof of part (ii) of Proposition \ref{proposition weak FKG}, we only need to focus on the case where $F,F'$ are disjoint. Using the formula \eqref{equation A_F pour mu_p} for $F\sqcup F'$, we get \begin{align*}
\mu_p(\A_{F\cup F'}) = \sum_{H_1\subset F}\sum_{H_2\subset F'}\ber{p}\big((F\cup F')\cap\omega=H_1\cup H_2\big)\mu(\A_{H_1\cup H_2}). 
\end{align*}
Now, if we use the assumption on $\mu$, and formula \eqref{equation A_F pour mu_p} once again for $F,F'$, we get precisely $\mu_p(\A_F\cap\A_{F'})\ge\mu_p(\A_F)\mu_p(\A_{F'})$.

If the inequality is strict for every pair of singletons, choose $e\in F$ and $e'\in F'$. Since $p<1$, the event that $F\cap\omega=\{e\}$ and $F'\cap\omega=\{e'\}$ has positive Bernoulli probability, and the inequality is therefore strict.
\end{proof}

\section{On Ising's energy field}
\label{section Ising's energy field and fk hat}
The objective of this section is to study weak versions of monotonicity and FKG for the energy field of the Ising model. For a finite graph $G=(V,E)$ with positive coupling constants $J=(J_e)_{e\in E}$, recall that the energy field of the Ising model on $G$ at inverse temperature $J$ is the random variable
    \begin{equation}\label{equation energy field from spin configuration}
        \xi(\sigma) = \Big(\mathbbm{1}(\sigma_{x}=\sigma_y)\Big)_{(xy)\in E},
    \end{equation}
where $\sigma$ is distributed as an Ising model on $G$. We write the law of $\xi$ as $\en_J=\en_J^G$. Alternatively, one can see that for any possible configuration $\upxi$
\begin{equation}\label{eq weight configuration energy field}
\en_J(\upxi)\propto \prod_{e:\ \upxi_e=0}e^{-2J_e}.
\end{equation}
Here, by a possible configuration, we mean that there is $\upsigma$ such that $\upxi= \xi(\upsigma)$. Note that \eqref{eq weight configuration energy field} implies that $\xi$ has the law of a percolation configuration of parameter $p=(p_e)_{e\in E}$ with $p_e/(1-p_e)=e^{2J_e}$, conditioned on the event that the result happens to be the energy field (i.e. the gradient) of some vertex configuration $\upsigma$.

It is known that the measures $\en_{J}$ are in general \emph{not} increasing in any component of $J$ in the sense of stochastic domination \cite{haggstrom1996note}; they also do not always satisfy the FKG inequality \cite{klausen2022monotonicity}\footnote{In this reference it is stated for the loop $O(1)$ model which is the dual measure of the energy field}. The objective of this section is to first show that weak forms of such properties, as defined in Section \ref{section general theory of domination}, are still true. Then, we find the exact thresholds for the parameters $p=p(J)\in [0,1]^E$ for which $(\en_J)_p=\en_J\cap\ber{p}$ is monotone or is weak$^\dagger$ monotone. The same will be done for the FKG property and weak$^\dagger$ FKG. Since we use a lot the measures $(\en_J)_p$ for distinct parameters $J,p$, it is convenient to write \begin{align*}
\en_{J,p}:=(\en_J)_p
\end{align*}
to ease the writing.

\subsection{Results}

In this section, we prove two results regarding the threshold for the monotonicity properties discussed in Section \ref{section general theory of domination}. The first result states that the threshold for monotonicity and FKG is the FK-percolation parameter associated with $J$, namely $p(J):=1-e^{-2J}$.

\begin{theorem}[Threshold for monotonicity and FKG property]\label{theorem weak monotonicity and FKG for energy field}
Let $G=(V,E)$ be a finite graph with coupling constants $J$. Then for any $p\leq p(J)$, $\en_J$ has the $p$--weak FKG property and for any $J_2\geq J_1\geq J$, $\en_{J_1}\domi_{p}\en_{J_2}$.
	
	Furthermore, take $n\in \mathbb N$, $J\in (\R^+)^{n}$, and $p\in (0,1]^{n+1}$ such that 
\begin{align}\label{e.assumption on p}
	\sum_{k=1}^n\frac{p_k-p(J)_k}{p_k}>1.
\end{align} Then, there exists a graph $G$ with $n+1$ edges with coupling constants $J_G$ that satisfies the following:
\begin{itemize}
	\item If we number the edges of $G$, $J_G$ restricted to its first $n$ edges is equal to $J$.
	\item Its energy field does not satisfy the $p$--weak FKG property.
	\item There is $J'_G\geq J_G$ such that $\en_{J'_G}$ does not $p$--weakly dominate $\en_{J_G}$.
\end{itemize}
\end{theorem}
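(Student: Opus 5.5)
The positive part follows from the Edwards--Sokal identity \eqref{equation edwards sokal} together with Theorem \ref{main theorem introduction}. For FKG: at $p=p(J)$ we have $\en_{J,p(J)}=\fk_{p(J),2}$, and FK percolation with $q\ge 1$ satisfies the FKG inequality. For $p\le p(J)$ we write $\en_{J,p}=\fk_{p(J),2}\cap\ber{p/p(J)}$ and apply Proposition \ref{proposition weak FKG}(i). For domination, take $J\le J_1\le J_2$ and $p\le p(J)\le p(J_1)\le p(J_2)$. Then
\[
\en_{J_1,p}=\fk_{p(J_1),2}\cap\ber{p/p(J_1)}
\]
and
\[
\en_{J_2,p}=\fk_{p(J_2),2}\cap\ber{p(J_1)/p(J_2)}\cap\ber{p/p(J_1)}.
\]
By \eqref{eq.strong_dom}, $\fk_{p(J_1),2}\domi\fk_{p(J_2),2}\cap\ber{p(J_1)/p(J_2)}$, and intersecting both sides with the same independent $\ber{p/p(J_1)}$ preserves the domination. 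This is exactly the argument already given after Theorem \ref{theorem weak stochastic monotonicity for Ising introduction}.

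For sharpness I would take $G$ to be the cycle with $n+1$ edges. Its energy fields are exactly the configurations with an even number of closed edges. By \eqref{eq weight configuration energy field}, $\en_J$ is the product measure with weights $1$ (open) and $e^{-2J_e}$ (closed), restricted to even parity. The parity indicator equals $\tfrac12\big(1+\prod_e(-1)^{1-\omega_e}\big)$, which gives a signed decomposition
\[
\en_J=\alpha\,\ber{a}+\beta\,\ber{b},\qquad a_e=\frac{1}{1+e^{-2J_e}},\quad b_e=\frac{1}{1-e^{-2J_e}}=\frac1{p(J)_e}>1 .
\]
Here $\alpha,\beta$ are explicit and $\alpha+\beta=1$, with $\beta/\alpha=\prod_e\frac{p(J)_e}{2-p(J)_e}$ up to normalisation. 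The second term is a product "measure" with parameter outside $[0,1]$. Intersecting with $\ber{p}$ acts coordinatewise, so
\[
\en_{J,p}=\alpha\,\ber{ap}+\beta\,\ber{p/p(J)} .
\]
On the first $n$ edges, $1-p_k/p(J)_k=-(p_k-p(J)_k)/p(J)_k$ is negative precisely when $p_k>p(J)_k$. This is the mechanism through which the assumption \eqref{e.assumption on p} should enter.

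For the failure of weak FKG, I would use the increasing event $A=\{\text{some of the first } n \text{ edges is open}\}$ together with $B=\forall_{\{n+1\}}$, or alternatively $A$ against its complement. All probabilities involved are explicit polynomials in the two product terms. For the domination I would take $J'_G$ equal to $J_G$ except on the $(n+1)$-st edge, where the coupling is raised to $+\infty$. Note that $\en_{J'_G}$ restricted to the first $n$ edges becomes the even-parity law on the path, while the coupling $J_{n+1}$ of $G$ itself is a free parameter. I would then compare $\en_{J_G,p}(A)$ with $\en_{J'_G,p}(A)$ for a suitable increasing event, again $A$ or $A\cap\forall_{\{n+1\}}$, in a limiting regime of $J_{n+1}$ chosen so that the signed $\beta$-term dominates the difference. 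The relevant difference should be, to leading order, a positive multiple of
\[
\prod_{k\le n}\Big(1-\frac{p_k}{p(J)_k}\Big)-\text{(first order terms)},
\]
and the sign condition $\sum_k \frac{p_k-p(J)_k}{p_k}>1$ should be exactly what makes it point the wrong way. An inequality of the form $\sum x_k>1$ suggests that a linearisation or a ratio $\prod(1-x_k)$ versus $1-\sum x_k$ comparison appears.

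The main obstacle is this last step: choosing the event, the coupling of the extra edge and the limit regime so that the explicit signed expansion reduces exactly to condition \eqref{e.assumption on p}. I would first try $J_{n+1}\to 0$, which makes $b_{n+1}$ blow up and so isolates the $\beta$-term, and test the events $\forall_{\{n+1\}}\cap A$ and $A$. If the resulting condition does not match \eqref{e.assumption on p}, I would then adjust $p_{n+1}$ or $J_{n+1}$.
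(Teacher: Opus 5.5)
Your positive part is correct and is exactly the paper's argument (Edwards--Sokal, upward movability \eqref{eq.strong_dom}, Proposition \ref{proposition weak FKG}(i)). The cycle $C_{n+1}$ and the signed decomposition $\en_{J,p}=\alpha\,\ber{ap}+\beta\,\ber{p/p(J)}$ are also what the paper uses.

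The gap is in the sharpness part: the decisive choice of event is missing, and the events you propose provably fail. Write $q_k=e^{-2J_k}$, $\nu_1=\ber{ap}$, $\nu_2=\ber{p/p(J)}$. For every $A$ not depending on the extra edge, the law of total covariance for this signed mixture gives
\begin{align*}
\cov_{\en_{J,p}}(\1_A,\omega_{n+1})=\alpha\beta\,\frac{2p_{n+1}q_{n+1}}{1-q_{n+1}^2}\,\big(\nu_2(A)-\nu_1(A)\big).
\end{align*}
So the sign does not depend on $J_{n+1}$ or $p_{n+1}$. No limiting regime of the extra coupling can help, and $p_{n+1}$ is prescribed, so it cannot be adjusted either.

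For $A=\{\text{some of the first } n \text{ edges is open}\}$ you get $\nu_2(A)-\nu_1(A)=\prod_k(1-a_kp_k)-\prod_k(1-p_k/p(J)_k)$. Moreover $|1-p_k/p(J)_k|<1-a_kp_k$ as soon as $p_k<1-q_k^2=p^\dagger(J)_k$. Hence this covariance is positive throughout $p(J)<p<p^\dagger(J)$, which is precisely the window where \eqref{e.assumption on p} must produce counterexamples (e.g. constant $p$ slightly above $p(J)$ and $n$ large). This is forced: $A^c$ and $\{\omega_{n+1}=0\}$ are events of type $\A_F$, and these are positively correlated for all $p\le p^\dagger(J)$ by Propositions \ref{proposition inequalities for edag} and \ref{proposition weak * FKG stable}. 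Pairing $A$ with its complement is not an FKG test at all, since $\1_{A^c}$ is decreasing.

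The missing idea is the event $A=\{\sum_{k=1}^n\omega_k\ge n-1\}$, i.e. at most one of the first $n$ edges closed. For a signed product measure,
\[
\ber{r}(A)=\prod_k r_k\,\Big(1+\sum_k\frac{1-r_k}{r_k}\Big),
\]
and for $r=p/p(J)$ one has $\frac{1-r_k}{r_k}=-\frac{p_k-p(J)_k}{p_k}$. The denominator $p_k$ in \eqref{e.assumption on p} comes from this odds ratio, not from $1-r_k$, which is what you computed. So \eqref{e.assumption on p} is exactly the condition $\nu_2(A)<0\le\nu_1(A)$, which gives a negative covariance with $\omega_{n+1}$ and rules out $p$--weak FKG.

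For the domination you must also tie this covariance to monotonicity in $J_{n+1}$, which your plan leaves vague. Either use the Russo-type identity $\partial_{J_e}\en_{J,p}(A)=\frac{2}{p_e}\cov_{J,p}(\1_A,\omega_e)$ (Lemma \ref{lemma derivative in Je}), or observe directly that $\en_{J,p}(A)=\nu_1(A)+\beta\,(\nu_2(A)-\nu_1(A))$. Here only $\beta=\big(1+\prod_e\frac{1+q_e}{1-q_e}\big)^{-1}$ depends on $J_{n+1}$, and it is strictly increasing in it. Then any finite $J'_{n+1}>J_{n+1}$ works; a coupling equal to $+\infty$ is not admissible anyway.
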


\begin{remark}\label{remark what does the assumption above p(J) mean?}
Assumption \eqref{e.assumption on p} might seem complicated. In reality, it is not very restrictive and justifies the vague statement of Theorem \ref{theorem weak stochastic monotonicity for Ising introduction} that one cannot take $p>p(J)$ such that in general $\en_{J,p}$ is either stochastically increasing in $J$ or has the FKG property. Indeed, if one takes $p,J$ to be constant on the edges, inequality \eqref{e.assumption on p} is satisfied as soon as 
\begin{align*}
	\sharp E-1 >\frac{p}{p-p(J)}.
\end{align*}
In other words, if we take $p$ close to $p(J)$ the ``counterexample graph'' $G$ exists as soon as we allow ourselves enough edges. However, note that we are not ruling out the possibility that for any graph $G$ there is a neighbourhood depending on $G$ of parameters around $p(J)$ such that the FKG property and weak stochastic domination in $J$ still hold.
\end{remark}

\begin{remark}[A remark about weak monotonicity of the energy field]\label{remark weak monotonicity already known}
Although it had not appeared previously in such a form, the fact that the energy field of the Ising model was weakly monotone in the coupling constants was already known: indeed, by Proposition \ref{proposition weak stochastic domination}, it is implied by monotonicity of the family of numbers $\big(\en_J(\forall_F)\big)_{F\subset E}$ in the coupling constants, a fact that can be seen to be a consequence of the classical monotonicity of the correlation functions of the spins, as explained below.

\begin{proposition}[Folklore]\label{proposition weak domination ising energy field}
     For $F\subset E$ non-empty, the probability $\en_J(\forall_F)$ is nondecreasing in (any component of) $J$.
\end{proposition}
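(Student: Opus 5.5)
The plan is to express $\en_J(\forall_F)$ as a spin correlation and then use the GKS inequalities. For an edge $e=(xy)$ we have $\mathbf 1_{\sigma_x=\sigma_y}=\frac{1+\sigma_x\sigma_y}{2}$. Hence
\begin{align*}
\en_J(\forall_F)=\E_J\Big[\prod_{(xy)\in F}\frac{1+\sigma_x\sigma_y}{2}\Big]=2^{-|F|}\sum_{H\subset F}\E_J\Big[\prod_{(xy)\in H}\sigma_x\sigma_y\Big]=2^{-|F|}\sum_{H\subset F}\langle\sigma_{A_H}\rangle_J,
\end{align*}
where $A_H\subset V$ is the set of vertices of odd degree in the subgraph $(V,H)$, and $\sigma_A:=\prod_{x\in A}\sigma_x$ (using $\sigma_x^2=1$). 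This expansion is the first step.

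The second step is to invoke the second Griffiths (GKS) inequality \cite{Griffiths,KS}. For a ferromagnetic Ising model with pair interactions $J_e\ge0$ and free boundary conditions, it gives
\begin{align*}
\frac{\partial}{\partial J_e}\langle\sigma_A\rangle_J=\langle\sigma_A\sigma_x\sigma_y\rangle_J-\langle\sigma_A\rangle_J\langle\sigma_x\sigma_y\rangle_J\ge 0 \qquad\text{for } e=(xy)
\end{align*}
(up to the positive factor coming from the normalisation $e^{2\sum J_e\xi_e}$, which differs from $e^{\sum J_e\sigma_x\sigma_y}$ only by a constant). So every correlation $\langle\sigma_A\rangle_J$ is nondecreasing in each $J_e$. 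Since $\en_J(\forall_F)$ is a nonnegative linear combination of such correlations, it is nondecreasing in every component of $J$, which proves the claim. The condition that $F$ be nonempty plays no real role: for $F=\emptyset$ the quantity is constant and equal to $1$.

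There is no serious obstacle. The only points to check are the bookkeeping of the product expansion (that the product over $H$ of the $\sigma_x\sigma_y$ collapses to $\sigma_{A_H}$) and matching the paper's normalisation $H_J=-2\sum J_e\xi_e$ with the standard pair Hamiltonian. Indeed $2J_e\xi_e=J_e(1+\sigma_x\sigma_y)$, so $\P_J$ is the standard Ising model with couplings $J$, and GKS applies directly.
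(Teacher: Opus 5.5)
Your proposal is correct and follows the paper's own argument. You expand $\prod_{(xy)\in F}\frac12(1+\sigma_x\sigma_y)$ into the nonnegative combination $2^{-|F|}\sum_{H\subset F}\langle\sigma_{A_H}\rangle_J$ and conclude because each correlation is nondecreasing in $J$ by the second Griffiths inequality. You also check that the normalisation $H_J=-2\sum_e J_e\xi_e$ is the standard ferromagnetic pair Hamiltonian up to a constant, so the derivative formula holds with no extra factor; the paper leaves this detail implicit.
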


(Note that to apply Proposition \ref{proposition weak stochastic domination} we need strict inequalities which are not true for some graphs $G$. However, one can easily reduce the analysis to graphs where strict monotonicity holds and then use it. See Subsection \ref{section weak * below edag} for a similar treatment.) 

\begin{proof}
    Indeed, $\en_J(\forall_F)=\big\langle\prod_{(xy)\in F}\frac12(1+\sigma_x\sigma_y)\big\rangle_J=1/2^{\#F}\sum_A\langle\sigma_A\rangle_J$ where $A$ ranges over some subsets of the vertices $V$, and $\sigma_A$ is a shorthand for $\prod_{x\in A}\sigma_x$. However, each term is well known to be increasing in $J$. 
\end{proof}

Unfortunately, this reasoning does not provide an explicit $p$ for which $p$--weak monotonicity holds, whereas such an infinite-volume statement can be deduced from our theorem.
\end{remark}

Coming back to the proof, as explained in the Introduction, the $p(J)$--weak monotonicity follows from Theorem \ref{main theorem introduction} which in turn is proven in Section \ref{section FK percolation}. The FKG part is not new: it is well known that $\fk_{p(J),2}=\en_J\cap\ber{p(J)}$ has FKG. Thus, the main challenge of this theorem is to find the counterexample graph $G$; this is done in Section \ref{section above p(J)}. The ideas there seem new to us.

\medskip Let us note that Theorem \ref{theorem weak monotonicity and FKG for energy field} can be read as a result on the threshold for FKG and stochastic monotonicity for the measures $\en_{J,p}$. We are also interested in the threshold in $p$ for the measures $\en_{J,p}$ when FKG and stochastic monotonicity are replaced by weak$^\dagger$ FKG and weak$^\dagger$ monotonicity. This new threshold must be at least as large as the previous threshold and the following theorem exhibits its exact value which is 
 \begin{align*}
 p^\dagger=p^\dagger(J):= 1-e^{-4J}=1-(1-p(J))^2>p(J).
 \end{align*}

\begin{theorem}[Threshold for weak$^\dagger$ domination and weak$^\dagger$ FKG] \label{theorem weak *}
	Let $G=(V,E)$ be a finite graph with coupling constants $J$. Then for every parameter $p<p^\dagger(J)$, the measure $\en_{J,p}$ satisfies the weak$^\dagger$ FKG inequality and for any $J_2\geq J_1 \geq J$, we have $\en_{J_1,p}\domi^\dagger\en_{J_2,p}$.
	
	Furthermore, there is a graph $G$ such that, for all $p\geq p^\dagger(J)$, the measure $\en_{J,p}$ does not satisfy the weak$^\dagger$ FKG inequality; moreover, there is $J'\geq J$ such that $\en_{J',p}$ does not weakly$^\dagger$ dominate $\en_{J,p}$.
\end{theorem}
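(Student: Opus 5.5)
The plan is to treat the two halves of the weak$^\dagger$ notions separately. The ``open'' half (weak domination and weak FKG of $\en_{J,p}$) comes from Proposition \ref{proposition weak domination ising energy field} and its FKG analogue. The ``closed'' half (dual weak domination and dual weak FKG) is proved at the endpoint $p=p^\dagger(J)$ through the measure $\edag_J:=\en_J\cap\ber{p^\dagger(J)}$, and then pushed strictly below $p^\dagger$ using Propositions \ref{proposition strict inequalities below large inequalities} and \ref{proposition weak * FKG stable}.

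\emph{Open half.} Since $\en_J(\forall_F)=2^{-\#F}\sum_A\langle\sigma_A\rangle_J$, where each $A$ is a subset of vertices, Griffiths' second inequality gives both the monotonicity of $\en_J(\forall_F)$ in $J$ and $\en_J(\forall_{F\cup F'})\ge\en_J(\forall_F)\en_J(\forall_{F'})$. To apply Proposition \ref{proposition weak stochastic domination}(iii) and Proposition \ref{proposition weak FKG}(iii), I need strict inequalities, and these fail only for degenerate configurations. I would reduce to graphs where strictness holds by splitting into $2$-connected blocks: edges in different blocks are independent, and bridges are independent $\ber{}$ variables. On a product of blocks, weak domination and weak FKG can be checked factor by factor. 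Alternatively, for $p<p(J)$ one can simply invoke Theorem \ref{theorem weak monotonicity and FKG for energy field} and handle only $p\in[p(J),p^\dagger(J))$ this way.

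\emph{Closed half at the endpoint.} This is the main obstacle. The set of closed edges $\eta=\xi^c$ is a cut of $G$, and $\en_J(\eta)\propto\prod_{e\in\eta}x_e$ with $x_e:=e^{-2J_e}$. The closed set of $\edag_J$ is $\eta\cup\zeta$, where $\zeta\sim\ber{x^2}$ is independent, so
\[\edag_J(\A_F)=\E\Big[\prod_{e\in F}\big(\1_{e\in\eta}+x_e^2\1_{e\notin\eta}\big)\Big].\]
Cuts form the orthogonal complement of the cycle space. I would therefore view $\eta$ as the ``disagreement set'' of an abstract Ising model whose spins $\tau$ are indexed by a basis of the cycle space (the abstract model of Remark \ref{r.abstract_Ising}). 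Its couplings $K$ are fixed by the abstract high-temperature expansion: $\tanh K_e=x_e$, that is, Kramers--Wannier duality carried out algebraically. In this representation $\1_{e\in\eta}$ and $\1_{e\notin\eta}$ become affine functions of the edge observable $\tau_e$. The key computation is to check that, exactly when the thinning parameter is $x_e^2=e^{-4J_e}$, each factor $\1_{e\in\eta}+x_e^2\1_{e\notin\eta}$ expands as a nonnegative combination of abstract spin correlations; I expect the formula \eqref{eq formula for edag} to encode precisely this. Griffiths' inequalities for the ferromagnetic abstract model then give two consequences. First, $\edag_J(\A_F\cap\A_{F'})\ge\edag_J(\A_F)\edag_J(\A_{F'})$. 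Second, since increasing $J$ decreases $K$, $\edag_J(\A_F)$ is nonincreasing in $J$. Monotonicity of $\en_{J,p^\dagger(J)}(\A_F)$ in $J$ at the fixed thinning $p^\dagger(J)$ follows by writing $\en_{J_2,p^\dagger(J)}=\edag_{J_2}\cap\ber{p^\dagger(J)/p^\dagger(J_2)}$ and using \eqref{equation A_F pour mu_p}.

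\emph{Going below $p^\dagger$, and sharpness.} For $p<p^\dagger(J)$, write $\en_{J,p}=(\en_{J,p^\dagger(J)})_r$ with $r<1$. Propositions \ref{proposition strict inequalities below large inequalities} and \ref{proposition weak * FKG stable} turn the large inequalities into strict ones for nonempty sets, provided strictness holds for singletons; after the same block reduction, singleton strictness holds. Parts (iii) of Propositions \ref{proposition weak stochastic domination} and \ref{proposition weak FKG}, applied to complements, then give the dual weak properties. For sharpness I would take the $4$-cycle with constant $J$, where cuts are exactly the even subsets of the four edges. I would compute $\en_{J,p}(\A_F)$ explicitly for $F$ a pair of opposite edges and for singletons, and show that the inequality $\en_{J,p}(\A_F\cap\A_{F'})\ge\en_{J,p}(\A_F)\en_{J,p}(\A_{F'})$ (and, for a suitable $J'\ge J$, the reversed monotonicity of $\A_F$) becomes an equality at $p=p^\dagger$ and fails for $p>p^\dagger$. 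Since the dual weak properties require strict inequalities at the level of small sprinkling, failure of strictness at $p=p^\dagger$ already rules out the weak$^\dagger$ properties there.
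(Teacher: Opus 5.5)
\paragraph{Gap 1: the comparison at the endpoint uses different thinnings.}
Your treatment of the FKG half is fine. At $p^\dagger$ it follows from \eqref{eq formula for edag} and GKS, and going below $p^\dagger$ through Proposition \ref{proposition weak * FKG stable} with singleton strictness on blocks is exactly what the paper does. (The mechanism behind \eqref{eq formula for edag} is the identity $q_\eta q^2_{F\setminus\eta}=q_F\,q_{\eta\Delta F}$ summed over $\F^*_G$. Here $\eta$ is the high-temperature configuration of the dual abstract model, not a function of its spins, but citing the formula is enough.)

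The gap is in the domination half. What you prove is that $J\mapsto\edag_J(\A_F)=q_F\langle\sigma(F)\rangle_{J^*}$ is nonincreasing, that is, $\en_{J,p^\dagger(J)}(\A_F)\ge\en_{J',p^\dagger(J')}(\A_F)$, where the thinning moves with $J$. Proposition \ref{proposition strict inequalities below large inequalities}, and dual domination itself, need the comparison at a \emph{common} thinning:
\[
\en_{J,p^\dagger(J)}(\A_F)\ge\en_{J',p^\dagger(J)}(\A_F).
\]
This is strictly stronger, because $p\mapsto\en_{J',p}(\A_F)$ is nonincreasing and $p^\dagger(J')\ge p^\dagger(J)$.

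Your reduction through \eqref{equation A_F pour mu_p} runs the wrong way. Set $s=p^\dagger(J)/p^\dagger(J')$ and write
\[
\en_{J',p^\dagger(J)}(\A_F)=\sum_{H\subset F}\ber{s}(F\cap\omega=H)\,\edag_{J'}(\A_H).
\]
Bounding $\edag_{J'}(\A_H)\le\edag_J(\A_H)$ only gives $\en_{J',p^\dagger(J)}(\A_F)\le\en_{J,s\,p^\dagger(J)}(\A_F)$. The right-hand side is \emph{larger} than $\en_{J,p^\dagger(J)}(\A_F)$, since extra thinning only increases the probability of $\A_F$.

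The problem is real exactly when $J'>J$ on some edge $e\in F$. For $e\notin F$ the thinning at $e$ does not affect $\A_F$, and your argument works. For $e\in F$, the $q_e$-derivative of $\edag_J(\A_F)$ equals the fixed-thinning derivative plus a nonnegative term coming from $r_e=q_e^2$ moving. Its sign therefore says nothing about the sign you need.

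The missing idea is the paper's Claim \ref{claim.weak monotonicity on one edge}:
\begin{enumerate}[(1)]
\item Change one edge at a time and freeze $\tilde p=1-q^2$.
\item By \eqref{eq moebius function of q}, $x\mapsto\en_{J(x),\tilde p}(\A_F)$ is a M\"obius map in $x=e^{-2J(x)_e}$, so its derivative has constant sign on $[q'_e,q_e]$.
\item At the single point $x=q_e$ the relation $r=q^2$ makes the high-temperature identity available. The sign there is the GKS inequality for the abstract model on the subgroup $\F^e=\{\eta\in\F^*_G: e\notin\eta\}$, applied to $\sigma(\eta_0)$ and $\sigma(F)$ with $\eta_0\in\F^*_G\setminus\F^e$ (Corollary \ref{coro moebius map and derivative}).
\item Chain the single-edge steps, using the first part of Proposition \ref{proposition strict inequalities below large inequalities} to lower each intermediate thinning $p^\dagger(J(k))$ to $p^\dagger(J)$.
\end{enumerate}

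\paragraph{Gap 2: sharpness at $p=p^\dagger$.}
Your inference that failure of strictness at $p=p^\dagger$ rules out the weak$^\dagger$ properties is invalid. Equality in the necessary inequalities $\mu(\A_F\cap\A_{F'})\ge\mu(\A_F)\mu(\A_{F'})$ is not a violation: product measures attain it and are FKG. Proposition \ref{proposition weak FKG}(iii) is only a sufficient criterion.

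In fact, on $C_4$ one has $(\edag_J\cup\ber{r})(\A_F)=s_F\,\edag_J(\A_F)$ with $s=1-r$. Every equality or strict inequality among $\A_F$-events is just rescaled by the sprinkling, so these events alone can never produce a counterexample.

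You need non-product events, as in the paper: $A=\A_{\{1,3\}}\cup\A_{\{2,3\}}$ and $B=\A_{\{0\}}$. In the inclusion--exclusion expansion of $\cov(\1_A,\1_B)$:
\begin{itemize}
\item the first two terms vanish, because $\edag_J(\A_{\{0\}\cup H})=\edag_J(\A_{\{0\}})\edag_J(\A_H)$ for $H=\{1,3\},\{2,3\}$;
\item the last term enters with a minus sign and is strictly positive, since $\edag_J(\A_E)>\edag_J(\A_{\{0\}})\edag_J(\A_{\{1,2,3\}})$.
\end{itemize}
This gives a negative covariance under $\edag_J\cup\ber r$ for every $r<1$. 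Similarly $\partial_{J_0}(\edag_J\cup\ber r)(A)>0$, and this transfers to the fixed thinning $p^\dagger(J)$ by monotonicity in the thinning parameter.

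The case $p>p^\dagger$ then needs no separate computation. It reduces to $p=p^\dagger$ by stability of the weak$^\dagger$ notions under further thinning (Propositions \ref{proposition weak * domination stable} and \ref{p.monotonicity_FKG}).
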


The counterexample graph can actually be chosen to be very simple: one may take $C_4$ the cycle graph with four vertices and edges. Note that if we replaced weak$^\dagger$ by weak in Theorem \ref{theorem weak *}, the threshold parameter is trivially $1$ as this follows from Theorem \ref{theorem weak monotonicity and FKG for energy field}. The fact that this is not the case for the weak$^\dagger$ version means in particular that the energy field does not dually weakly increase, nor does it have dual weak FKG. 
\par Since weak$^\dagger$ domination/FKG amounts to having both weak and dual weak domination/FKG, and we already know the ``primal'' part by Theorem \ref{theorem weak monotonicity and FKG for energy field}, we need only to focus on the dual notions.

\medskip
Let us now comment on the percolation $\edag_J=\en_{J,p^\dagger(J)}$ at the threshold point $p^\dagger(J)$: the fact that at $p=p^\dagger$ the weak$^\dagger$ properties do not hold is not a contradiction: indeed, weak$^\dagger$ domination and weak$^\dagger$ FKG are not closed under weak convergence of measures. However, as a corollary of Theorem \ref{theorem weak *} one must have
\begin{align}
	\label{e.weak * FKG} & \en_{J,p^\dagger}(\A_F \cap \A_{F'}) \geq \en_{J,p^\dagger}(\A_F)\en_{J,p^\dagger}(\A_{F'}) & \forall F,F'\subseteq E,\\
	\label{e.weak * monotonicity} & \en_{J,p^\dagger}(\A_F)\geq \en_{J',p^\dagger}(\A_F)& \forall F\subseteq E,\ J'\geq J
\end{align}
because these properties are implied respectively by dual weak FKG and dual weak monotonicity, and are themselves closed under weak convergence of measures. To prove Theorem \ref{theorem weak *}, we actually first prove the inequalities above \eqref{e.weak * FKG} and \eqref{e.weak * monotonicity}. This is the hardest part of the proof and it is the content of Section \ref{section weaker statements at the level of edag}. Then, to conclude the proof, we show in Section \ref{section weak * below edag} that these inequalities imply weak$^\dagger$ FKG and weak$^\dagger$ monotonicity for parameters $p$ strictly below $p^\dagger$. 

\bigskip\noindent
\textbf{Notations}
In the rest of the section, we use the notation $q=(q_e)_{e\in E}=e^{-2J}$, so that $p(J)=1-q$ and $p^\dagger(J)=1-q^2$. We also write $q_F=\prod_{e\in F}q_e$ for $F\subset E$ a subset of edges, and $Z=Z_J$ for the partition function of the energy field defined by equation \eqref{eq weight configuration energy field}, i.e. \begin{equation}\label{eq partition function in q}
Z= Z_J = \sum_{F\in\F}q_F
\end{equation}
where $\F$ is the subset of ${\mathcal P}(E)$ containing $F$ if and only if there is a spin configuration $\sigma\in\{\pm1\}^V$ such that $F$ is the set of \emph{closed} edges of the percolation $\xi(\sigma)$. Beware that $q$ is a \emph{decreasing} function of $J$. 

\subsection{Upper bound for the weak threshold and end of the proof of Theorem \ref{theorem weak monotonicity and FKG for energy field}.}\label{section above p(J)}

In this section, we prove that $p(J)=1-e^{-2J}$ is indeed the threshold for weak FKG and weak monotonicity\footnote{The proof we present for this proposition was found in a discussion with Gemini Pro version 3.1}, concluding the proof of Theorem \ref{theorem weak monotonicity and FKG for energy field}. Although the precise statement in Theorem \ref{theorem weak monotonicity and FKG for energy field} is a bit complicated, the ``counterexample graph'' for this situation is quite simple as we will choose the cycle graph on $n+1$ vertices again.

We start by proving a version of Russo's formula that is an extension of Theorem 2.46 in \cite{grimmett2006random}. It allows us to deal at once with the monotonicity part and the FKG part.

\begin{lemma}\label{lemma derivative in Je}
Let $A$ be an event that does not depend on edge $e$, let $p\in(0,1]^E$ be a parameter, and let $\omega\sim \en_{J,p}$. Then, 
\begin{equation}\label{eq derivative in Je of enJ cap ber p}
\partial_{J_e}[\en_{J,p}(A)]= \frac{2}{p_e}\cov_{J,p}(\1_A,\omega_e),
\end{equation}
where covariance is taken with respect to the measure $\en_{J,p}$.
\end{lemma}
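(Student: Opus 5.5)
The plan is to differentiate the Gibbs weight of the un-thinned energy field and then transfer the resulting covariance to the thinned measure, using that $A$ does not see the edge $e$.

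First, write $\en_{J,p}(A)=\E_{\en_J}[f(\xi)]$, where
\[
f(\upxi):=\ber{p}\big(\upxi\cap\eta\in A\big)
\]
and $\eta\sim\ber p$ is independent of $\xi$. Because $A$ does not depend on the edge $e$, the value of $\upxi\cap\eta$ on $e$ is irrelevant for $A$. So $f(\upxi)$ does not depend on $\upxi_e$, and it also does not depend on $J$.

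Next, I differentiate in $J_e$. By \eqref{eq weight configuration energy field}, for every possible configuration $\upxi$ the weight is
\[
\prod_{e':\upxi_{e'}=0}e^{-2J_{e'}}=\exp\Big(\sum_{e'}2J_{e'}(\upxi_{e'}-1)\Big),
\]
and the set of possible configurations does not depend on $J$. Hence $\partial_{J_e}\log(\text{weight})=2\upxi_e-2$. The standard computation for exponential families, with the partition function $Z_J$ absorbing the constant $-2$, then gives
\[
\partial_{J_e}\E_{\en_J}[g]=2\,\cov_{\en_J}(g,\xi_e)
\]
for any $J$-independent function $g$. Applying this with $g=f$ yields
\[
\partial_{J_e}\en_{J,p}(A)=2\,\cov_{\en_J}(f(\xi),\xi_e).
\]

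Finally, I relate this to the covariance under $\en_{J,p}$. Let $\omega=\xi\cap\eta$. Since $A$ does not depend on $e$, we have $\1_A(\omega)\,\omega_e=\1_A(\xi\cap\eta)\,\xi_e\eta_e$, and $\eta_e$ is independent of $\xi$ and of $\eta|_{E\setminus\{e\}}$. Conditioning on $\xi$ therefore gives three identities:
\[
\E[\1_A(\omega)\omega_e]=p_e\,\E_{\en_J}[f(\xi)\xi_e],\qquad \E[\omega_e]=p_e\,\E_{\en_J}[\xi_e],\qquad \E[\1_A(\omega)]=\E_{\en_J}[f(\xi)].
\]
Combining them, $\cov_{J,p}(\1_A,\omega_e)=p_e\,\cov_{\en_J}(f,\xi_e)$. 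Substituting this into the previous display gives \eqref{eq derivative in Je of enJ cap ber p}; the factor $1/p_e$ is legitimate because $p_e>0$.

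There is no serious obstacle here. The only points needing care are that the support of $\en_J$ is independent of $J$ (so differentiating only hits the weights) and that the independence of $\eta_e$ is what allows the factor $p_e$ to be pulled out.
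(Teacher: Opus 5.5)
Your proof is correct and follows essentially the same route as the paper: you differentiate the exponential-family weight of $\en_J$ to obtain $2\cov_{\en_J}(\cdot,\xi_e)$, then use the independence of $\eta_e$ from $\1_A(\xi\cap\eta)\,\xi_e$ to extract the factor $p_e$. The only difference is the order of integration. You average over $\eta$ first, working with $f(\xi)=\ber{p}(\xi\cap\eta\in A)$. The paper instead fixes $\eta$, differentiates $\en_J(A^\eta)$, and averages over $\eta$ afterwards.
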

The proof of this formula follows the classical ideas already present in \cite{grimmett2006random}.
\begin{proof}
Let us first compute $\partial_{J_e}\en_J(A)$. Note that, by summing over possible configurations $\upxi\subseteq E$,
\begin{align*}
	\partial_{J_e}\en_J(A) &= \frac{1}{Z_{J}}\partial_{J_e} \sum_{\upxi \in A} e^{2\sum_{f\in E}\upxi_f J_f} - \frac{1}{Z_{J}}\sum_{\upxi \in A} e^{2\sum_{f\in E}\upxi_f J_f} \frac{\partial_{J_e} Z_J}{Z_J}\\
	&= 2\en_J(\xi_e \1_A) - 2\en_J(A)\en_J(\xi_e) \\
	&= 2\cov_{J,1}(\1_A,\xi_e).
\end{align*}

Now, for $F\subseteq E$ we denote the event
\begin{align*}
	A^F:=\{\upxi\subset E: \upxi\cap F\in A\}.
\end{align*}
Take a percolation $\eta\sim\ber{p}$ independent of $\xi\sim \en_J$ and note that $\en_{J,p}(A)=\ber{p}[\en_J(A^\eta)]$. This implies that 
\begin{equation*}
\partial_{J_e}[\en_{J,p}(A)]=\ber{p}\left[\partial_{J_e}\en_J(A^\eta)\right]=2\ber{p}\big[\en_J(\xi_e\1_{A^\eta})\big] - 2\en_J(\xi_e)\ber{p}[\en_J(A^\eta)].
\end{equation*}
Now, since $A$ does not depend on $e$, the random variables $\eta_e$ and $\xi_e \1_{A^\eta}$ are independent. Hence, 
\begin{equation*}
p_e\ber{p}\big[\en_J(\xi_e\1_{A^\eta})\big] = \ber{p}[\en_J(\eta_e \xi_e \1_{A^\eta})]=\en_{J,p}[\omega_e \1_A].
\end{equation*}
We conclude using $p_e\en_J(\xi_e)=\en_{J,p}(\omega_e)$ and $\ber{p}[\en_J(A^\eta)]=\en_{J,p}(A)$.
\end{proof}

Consider the cycle graph $G=(V,E)=C_{n+1}$ whose edge set is identified with $E=\{0,\ldots,n\}$, and fix $J\in\big(\R_{>0}\big)^E$ and $p\in(0,1]^E$ satisfying Assumption \eqref{e.assumption on p}. Recall that we use the notation $q=1-p(J)$, so that this assumption reads \begin{align*}
	\sum_{e=1}^{n} \frac{q_e+p_e-1}{p_e}>1.
\end{align*}
The numbers $J_0,p_0$ are arbitrary. We wish to prove that $\en_{J,p}$ does not have the FKG property and is not stochastically increasing in the coordinate $J_0$.

We start by establishing a decomposition of $\en_J$ as a linear combination of two Bernoulli measures. Remarkably, one of these measures has a parameter strictly bigger than one, which implies that it is \emph{not a probability measure}. More precisely,

\begin{lemma}\label{l.decomposition en_J in sum of bernoulli}
Take $C_1=\prod_{e=0}^n(1+q_e)$, $C_{-1}=\prod_{e=0}^n(1-q_e)$ and $Z=\sum_{F\in\F}q_F$ where $\F\subset{\mathcal P}(E)$ is the set of subsets of $E$ of even cardinality and $q_F=\prod_{e\in F}q_e$ for $F\subset E$. We have that $C_1+C_{-1}=2Z$ and
\begin{equation}\label{eq decomposition en_J in sum of bernoulli}
\en_J = \frac{1}{2Z}\big(C_1\cdot\ber{\frac1{1+q}}+C_{-1}\cdot\ber{\frac1{1-q}}\big)
\end{equation}
as measures.
\end{lemma}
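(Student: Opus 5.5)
We prove the decomposition pointwise, evaluating both sides on an arbitrary configuration. On the cycle $C_{n+1}$, a configuration $\upxi$ is the energy field of some spin configuration exactly when its set of closed edges $F=E\setminus\upxi$ has even cardinality: going once around the cycle, the spin flips an even number of times. Conversely, every even $F$ arises from a spin configuration, which is then unique up to a global flip. Therefore, by \eqref{eq weight configuration energy field}, $\en_J(\upxi)=q_F/Z\cdot\1_{|F|\text{ even}}$, with $Z=\sum_{|F|\text{ even}}q_F$, as in the statement.

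The first identity $C_1+C_{-1}=2Z$ comes from expanding the two products. We have $\prod_e(1+q_e)=\sum_{F\subset E}q_F$ and $\prod_e(1-q_e)=\sum_{F}(-1)^{|F|}q_F$. Adding them, the terms with $|F|$ odd cancel and those with $|F|$ even are doubled, so $C_1+C_{-1}=2Z$.

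For the measure identity, fix $\upxi$ with closed set $F$. We have
\begin{align*}
\ber{\frac1{1+q}}(\upxi)=\prod_{e\notin F}\frac{1}{1+q_e}\prod_{e\in F}\frac{q_e}{1+q_e}=\frac{q_F}{C_1}.
\end{align*}
In the same formal way, $\ber{\frac1{1-q}}$ gives the signed weights $1/(1-q_e)$ on open edges and $1-\frac1{1-q_e}=\frac{-q_e}{1-q_e}$ on closed edges. Hence $\ber{\frac1{1-q}}(\upxi)=(-1)^{|F|}q_F/C_{-1}$. Plugging these into the right-hand side of \eqref{eq decomposition en_J in sum of bernoulli} gives
\begin{align*}
\frac{1}{2Z}\big(q_F+(-1)^{|F|}q_F\big)=\frac{q_F}{Z}\1_{|F|\text{ even}}=\en_J(\upxi).
\end{align*}
This holds for every $\upxi$, so the two measures coincide.

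The only delicate point is interpretation. $\ber{\frac1{1-q}}$ has parameter larger than $1$, so it is a signed product measure and not a probability measure, and one must check that its formal product formula is what is meant. This is harmless because all identities are finite algebraic identities between functions on $\{0,1\}^E$. We also need $q_e<1$, i.e.\ $J_e>0$, so that $C_{-1}\neq 0$; otherwise we state the identity in the unnormalised form $q_F+(-1)^{|F|}q_F$. Since this is just an algebraic identity, the main step is really the parity characterisation of possible configurations on the cycle, which is immediate.
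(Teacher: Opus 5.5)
Your proof is correct and follows essentially the paper's route: both rest on the parity characterization of energy fields on the cycle and on the pointwise identity $C_{-1}\,\ber{\frac1{1-q}}(\upxi)=(-1)^{|F|}q_F$. The paper merely phrases this through an auxiliary measure $\en_J'$ (the Bernoulli measure conditioned on an odd closed set, with $Z\pm Z'=C_{\pm1}$), which your direct expansion of the products makes unnecessary.
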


As stated before the proof, we are abusing notation as $1/(1-q)>1$. In this context, for $r\in\R^E$, we define $\ber{r}$ to be the (signed) measure on $\{0,1\}^E$ where 
\begin{align*}
\ber{r}(\eta)=\prod_e r_e^{\eta_e}(1-r_e)^{1-\eta_e}.
\end{align*}
It is straightforward to see that $\ber{r}$ is a probability measure if and only if $r\in[0,1]^E$ and that $\ber{r}(\{0,1\}^E)=1$ for all $r\in \R^E$.

\begin{proof}
In the case of the cycle graph, $\upxi\in \{0,1\}^E$ is the gradient of a vertex configuration if and only if $\upxi^c=1-\upxi$ is such that $\sum_e \upxi^c_e$ is even. In this context, we see that $\en_J$ can be defined by conditioning $\xi\sim\ber{\frac{1}{1+q}}$ on the event $\{\xi^c\in \F\}$. It also makes sense to define the measure $\en_J'$ to be the law of $\xi \sim\ber{\frac{1}{1+q}}$ conditioned on the event $\{\xi^c\notin\F\}$. In these two cases, the partition functions are, up to multiplication by $C_1^{-1}$, 
\begin{align*}
	Z= \sum_{F\in \F} q_F \qquad\text{ and }\qquad Z':=\sum_{F\not\in\F}q_F,
\end{align*}
respectively.

Let us see that we can decompose the Bernoulli measures by using $\en_J$ and $\en_J'$ as follows: \begin{align} \label{e.ber_as_sum_of_energies}
\ber{\frac{1}{1+q}} = \frac1{C_1}\cdot\big( Z\cdot\en_J+Z'\cdot\en_J') \quad \text{ and } \quad
\ber{\frac1{1-q}} = \frac1{C_{-1}}\cdot\big( Z\cdot\en_J-Z'\cdot\en_J').
\end{align}
Indeed, the first equality is just the formula of total probability. The second one could also be interpreted as a formula of total probability, but as it is not a probability measure we need to prove it by hand. Note that for any configuration $\upxi \in \{0,1\}^E$, $Z\en_J(\upxi)-Z'\en'_J(\upxi)$ is equal to
\begin{align*}
& \prod_e q_e^{1-\upxi_e} \1_{\upxi^c \in \F} -\prod_e q_e^{1-\upxi_e} \1_{\upxi^c \notin \F}=(-1)^{\1_{\{\upxi^c\notin \F\}}}\prod_e q_e^{1-\upxi_e}.
\end{align*}
By summing over all configurations $\upxi\in \{0,1\}^E$,  we see that $Z+Z'=C_1$ and $Z-Z'=C_{-1}$; thus $C_1+C_{-1}=2Z$. Furthermore, by multiplying the two identities in \eqref{e.ber_as_sum_of_energies} by $C_1$ and $C_{-1}$, respectively, and summing them, we obtain \eqref{eq decomposition en_J in sum of bernoulli}.
\end{proof}

Before proving the second part of Theorem \ref{theorem weak monotonicity and FKG for energy field}, let us make some comments regarding signed measures with total mass $1$, as this is not a common object in probability theory but many well-known formulae are equally true in this context. In particular, if we have a signed measure $\nu$ with total mass equal to $1$, and $f,g\in L^2(|\nu|)$, we can define
\begin{align*}
	\cov_\nu(f,g):= \int fg d\nu - \int f d\nu \int g d\nu.
\end{align*}
Then, we can check that classical formulae for the covariance still hold in this setup. For example:
\begin{enumerate}[i)]
	\item If $\nu$ is a product measure and $f$ and $g$ depend on different coordinates, then $\cov_\nu(f,g)=0$.
	\item If $\nu= \gamma \nu_1 + (1-\gamma) \nu_2$, where $\gamma\in \R$ and $\nu_1, \nu_2$ are signed measures with total mass $1$, then
	\begin{align}\label{e.cov}
		\cov_\nu(f,g)=\gamma\cov_{\nu_1}(f,g)+(1-\gamma)\cov_{\nu_2}(f,g)+\gamma(1-\gamma)\cdot\big(\nu_2(f)-\nu_1(f)\big)\cdot\big(\nu_2(g)-\nu_1(g)\big).
	\end{align}
	This is the extension of the law of total covariance.
\end{enumerate}

We now have the tools to show the second part of Theorem \ref{theorem weak monotonicity and FKG for energy field}. The strategy is to construct an increasing event $A$ independent of edge $0$ such that $\cov(\1_A,\omega_0)<0$. This prevents $\en_{J}$ from having the $p$--weak FKG property, and, thanks to Lemma \ref{lemma derivative in Je}, $\en_J$ is not $p$--weak monotone in $J$, precisely the two statements we are aiming for. A key technical tool that we use in the proof concerns the intersections\footnote{Recall the intersection of measures defined at the beginning of Section \ref{section FK percolation}.} of Bernoulli measures: $\ber{r}\cap\ber{\tilde r}=\ber{r\tilde r}$ for any parameters $r,\tilde r \in \R^E$.

\begin{proof}[Proof of the second paragraph of Theorem \ref{theorem weak monotonicity and FKG for energy field}]
Define the event $A:=\{\sum_{e=1}^n\omega_e\ge n-1\}$. Using the notations of Lemma \ref{l.decomposition en_J in sum of bernoulli}, we define $\alpha=C_1/(2Z)$ and $\beta=C_{-1}/(2Z)$ and note that $\alpha+\beta=1$. Now, we use Lemma \ref{l.decomposition en_J in sum of bernoulli} and the fact that the intersection of two Bernoulli measures is a Bernoulli measure to see that \begin{equation}\label{eq decomposition en_J cap ber p}
\en_{J,p}=\alpha\cdot\ber{\frac{p}{1+q}}+\beta\cdot\ber{\frac{p}{1-q}}.
\end{equation}
As $\beta=1-\alpha$, we can use \eqref{e.cov} with $\nu_1= \ber{p/(1+q)}$ and $\nu_2= \ber{p/(1-q)}$, $f=\1_A$ and $g=\omega_0$, to see that
\begin{align*}
	\cov_{J,p}(\1_A,\omega_0)=\alpha \beta \frac{2p_0 q_0}{(1+q_0)(1-q_0)} (\nu_2(A)-\nu_1(A)).
\end{align*}
This is because $\cov_{\nu_1}(f,g)=\cov_{\nu_2}(f,g)=0$ since $\nu_1,\nu_2$ are product measures, and $\nu_2(g)=p_0/(1-q_0)$ and $\nu_1(g)=p_0/(1+q_0)$.
We now prove that $\nu_2(A)<0$. Since the constant multiplying $(\nu_2(A)-\nu_1(A))$ is strictly positive, this is enough to show that $\cov_{J,p}(\1_A,\omega_0)<0$.

To show the negativity of $\nu_2(A)=\ber{p/(1-q)}(A)$, let us use the formula for Bernoulli measures: if $r=p/(1-q)$, \begin{equation}
\ber{r}(A)=
\sum_{e=1}^n\ber{r}\big(A, \omega_e=0\big) + \ber{r}\left(\Big\{\sum_{e=1}^n \omega_e=n\Big\}\right) = \prod_{e=1}^n r_e\cdot\left[\sum_{e=1}^n\frac{1-r_e}{r_e}+1\right].
\end{equation}
Since
\begin{align*}
\frac{1-r_e}{r_e}=-\frac{p_e+q_e-1}{p_e},
\end{align*}
and the sum of these quantities over $e\in \{1,\ldots,n\}$ is $<-1$, we obtain that $\nu_2(A)<0$ and thus $\cov_{J,p}(\1_A,\omega_0)<0$. This gives a counterexample for the FKG inequality, and by Lemma \ref{lemma derivative in Je} we see that $\en_{J,p}(A)$ is not increasing in $J_0$.
\end{proof}

\begin{remark}
The decomposition of the energy field into a sum of Bernoulli measures in Lemma \ref{l.decomposition en_J in sum of bernoulli} can be generalized to any finite graph (not only the cycle) and may be worth studying. However, since we only need it for the special case of the cycle, we do not present it.

It is clear from the decomposition used in the proof that $p(J)=1-q$ is a threshold: it is the largest $p$ for which the decomposition \eqref{eq decomposition en_J cap ber p} becomes a convex combination of \emph{probability distributions}. This property holds true for any finite graph. 
\end{remark}

\subsection{No weak$^\dagger$ domination and FKG at and above the threshold $p^\dagger$}\label{ss. at level pdag}
In this section, we prove that $p^\dagger(J)=1-e^{-4J}$ is an upper bound for the threshold of weak$^\dagger$ FKG and weak$^\dagger$ monotonicity. By Propositions \ref{proposition weak * domination stable} and \ref{proposition weak * FKG stable}, it is enough to work at $p=p^\dagger(J)$\footnote{This counterexample was found during the course of a discussion with ChatGPT 5.5}.

Let $G=C_4$, with edge set $E=\{0,1,2,3\}$, and let $q_e=e^{-2J_e}\in(0,1)$. We write
\begin{align*}
	Z=Z_{\rm even}:=\sum_{\substack{F\subset E:\ |F|\ {\rm even}}}q_F,
	\qquad
	Z_{\rm odd}:=\sum_{\substack{F\subset E:\ |F|\ {\rm odd}}}q_F.
\end{align*}
Take $\mu_J=\edag_J=\en_{J,p^\dagger(J)}$ and fix an arbitrary parameter $r\in[0,1)^E$. Set $s_e=1-r_e>0$ and
\begin{align*}
	\mu_J^r:=\mu_J\cup\ber{r}.
\end{align*}
Since an edge is closed for $\mu_J^r$ if and only if it is closed for $\mu_J$ and is not opened by the independent Bernoulli field, for every $F\subset E$,
\begin{align}\label{eq closed events after union for edag on C4}
	\mu_J^r(\A_F)=s_F\mu_J(\A_F).
\end{align}
Moreover, on $C_4$, the admissible closed sets for the energy field are exactly the even subsets of $E$. Hence, directly from the definition of $\mu_J=\en_J\cap\ber{1-q^2}$\footnote{Equation \eqref{eq formula for edag} is the generalization of this formula to any graph.},
\begin{align}\label{eq edag closed events on C4}
	\mu_J(\A_F)=\frac1Z\sum_{\substack{H\subset E:\ |H|\ {\rm even}}}q_{H}q_{F\setminus H}^2
	= \frac{q_F}{Z}\sum_{\substack{H\subset E:\ |H|\ {\rm even}}}q_{H\Delta F}
	= \begin{cases}
		q_F,& |F|\ {\rm even},\\
		q_F\dfrac{Z_{\rm odd}}{Z},& |F|\ {\rm odd}.
	\end{cases}
\end{align}
We now consider the decreasing events
\begin{align*}
	A:=\A_{\{1,3\}}\cup\A_{\{2,3\}},
	\qquad
	B:=\A_{\{0\}}.
\end{align*}
Using \eqref{eq closed events after union for edag on C4} and \eqref{eq edag closed events on C4},
\begin{align*}
\mu_J^r(A)&=s_3q_3(s_1q_1+s_2q_2)-s_1s_2s_3q_1q_2q_3\frac{Z_{\rm odd}}{Z},\\
\mu_J^r(B)&=s_0q_0\frac{Z_{\rm odd}}{Z},\\
\mu_J^r(A\cap B)&=s_0s_3q_0q_3(s_1q_1+s_2q_2)\frac{Z_{\rm odd}}{Z}
-s_0s_1s_2s_3q_0q_1q_2q_3.
\end{align*}
Therefore, using
\begin{align*}
	Z^2-Z_{\rm odd}^2=(Z+Z_{\rm odd})(Z-Z_{\rm odd})=\prod_{e=0}^3(1-q_e^2),
\end{align*}
we obtain
\begin{align}\label{eq negative covariance union edag C4}
\cov_{\mu_J^r}(\1_A,\1_B)
&=-\frac{s_0s_1s_2s_3q_0q_1q_2q_3}{Z^2}\prod_{e=0}^3(1-q_e^2)<0.
\end{align}
Since $A$ and $B$ are decreasing events, the FKG inequality for $\mu_J^r$ would imply that this covariance is non-negative. Thus, for no parameter $r<1$ does $\edag_J\cup\ber{r}$ satisfy FKG. In particular, $\edag_J$ does not have weak$^\dagger$ FKG.

The same example rules out weak$^\dagger$ monotonicity at the threshold. Indeed, the event $A$ is decreasing, and its probability under $\mu_J^r$ satisfies
\begin{align}\label{eq derivative monotonicity union edag C4}
\partial_{J_0}\mu_J^r(A)
&=\frac{2s_1s_2s_3q_0q_1q_2q_3}{Z^2}\prod_{e=1}^3(1-q_e^2)>0.
\end{align}
To see this, write $Z=A_0+q_0B_0$ and $Z_{\rm odd}=B_0+q_0A_0$, where $A_0$ and $B_0$ are respectively the even and odd sums in the variables $q_1,q_2,q_3$; then
\begin{align*}
	\partial_{J_0}\left(\frac{Z_{\rm odd}}{Z}\right)
	=-\frac{2q_0}{Z^2}(A_0^2-B_0^2)
	=-\frac{2q_0}{Z^2}\prod_{e=1}^3(1-q_e^2),
\end{align*}
and \eqref{eq derivative monotonicity union edag C4} follows from the formula for $\mu_J^r(A)$ above. If the family $J\mapsto \big(\en_J\cap\ber{p^\dagger}\big)\cup\ber{r}$ were stochastically increasing (fixing the value of $p^\dagger$ and changing only the measure $\en_J$), the same would be true for $J\mapsto \edag_J\cup\ber{r}$ (where this time the parameter $p^\dagger=p^\dagger(J)$ of the Bernoulli used for thinning changes with $J$) since $J\mapsto p^\dagger(J)$ is increasing. Hence, the probability of every decreasing event would be non-increasing in each coordinate $J_e$. Equation \eqref{eq derivative monotonicity union edag C4} gives the opposite behaviour. Thus no $r<1$ makes $\edag_J\cup\ber{r}$ increasing in $J$, and consequently the threshold value $p^\dagger$ cannot be included in the weak$^\dagger$ domination statement.

\begin{remark}
As we just proved, the obstruction already appears on the cycle $C_4$. Actually, if the parameter $p$ had been taken strictly above $p^\dagger$, one would have had the negative results even for the triangle $C_3$. Indeed, one can check that the inequalities \eqref{e.weak * FKG} and \eqref{e.weak * monotonicity} would not hold for values of $p$ strictly larger than $p^\dagger$.
\end{remark}

\subsection{Lower bound for the weak$^\dagger$ threshold}
\label{subsection proof of theorem weak *}
We are now in position to deal with the first paragraph of Theorem \ref{theorem weak *}, which is its most difficult part. Let us recall the notation $\edag_J:=\en_{J,p^\dagger(J)}$. This measure plays a key role in the proof, a fact which should not come as a surprise since the proof of Theorem \ref{theorem weak monotonicity and FKG for energy field} strongly relies on the properties of $\en_{J,p(J)}=\fk_{p,2}$.

When $G=(V,E)$ is a planar graph, the proof of this result relies on the high-temperature expansion of the Ising model on the dual graph. However, for general graphs, we require a broader framework. We address this in Section \ref{section abstract Ising}, by introducing the Ising model on an abstract graph. While formalising the full Ising model is not strictly necessary, as the high-temperature expansion alone would suffice, doing so provides a much clearer conceptual picture. As the following section is quite abstract, the reader may skip it in a first reading and assume that the graph is planar.

\begin{remark}\label{r.abstract_Ising}
The fact that we need to generalize the Ising model is reminiscent of the fact that the key coupling in \cite{AHL} (Theorem 2.7) seems at first glance to be related to planarity. Indeed, though they construct their percolation model in the general case (in Definition 2.6), it can also be defined in the special case of a planar graph as the dual of the double random current on the dual graph and some of their proofs become easier in this context. Our construction could allow one to actually make sense of this for any graph. 
\par Furthermore, note that in their equation (2.8), if one only takes the intersection $\xi(\sigma)\cap\eta$ (without the extra $\xi(\tilde\sigma)$), one recovers our model $\edag$.
\end{remark}

\subsubsection{Introduction and basic properties of the Ising model on an abstract graph}
\label{section abstract Ising}
In this subsection, we introduce a general framework for defining an Ising model; the reason for this will become clear later in the proofs.

\begin{defi}Let $E$ be a finite set and $\F\subseteq \mathcal P(E)$. We say that $\F$ is a \emph{high-temperature structure} (HT structure) on $E$ if $(E,\F)$ is a subgroup of $(\mathcal P(E),\Delta)$.
\end{defi}
\par Of course, taking $\F$ to be either $\{\emptyset\}$ or $\mathcal P(E)$ gives an HT structure, but we regard these as trivial examples. For a graph $G=(V,E)$ there are two important HT structures. The first one is the \emph{natural HT structure}, defined by \begin{align*}
\F_G=\{\upxi\subset E: \upxi\text{ is an even\footnote{Here by an even graph, we mean a graph for which every vertex has an even degree.} subgraph of }G\}.
\end{align*} 
This structure can be thought of as the trace of all possible loops on $G$. The second one is the \emph{dual HT structure}, and is based on \eqref{equation energy field from spin configuration}:
\begin{align}\label{e.def_F*}
 \F_G^*:=\{\upxi\subset E: \text{there is $\sigma:V \to \{\pm1\}$ such that } \upxi^c=\xi(\sigma)\}.
\end{align}
(Note that $\upxi$ is the \emph{complement} of the energy field of an Ising configuration.)
Indeed, $\F^*_G$ is an HT structure as $\sigma \mapsto \xi(\sigma)^c$ is a group homomorphism. Furthermore, if $G$ is planar, the natural HT structure of its dual $G^*$ can be identified with $\F^*_G$.

As we stated at the beginning of this section, we want to define an abstract Ising model with edge sets on $E$.
To do that we need to define the ``values'' of the model. This is done through the following definition.
\begin{defi}[Space of spin configurations] Let $E$ be a finite set and $\F$ be an HT structure on $E$. We define the space of spin configurations as
		\begin{equation}
			\V = \V_\F=\mathrm{Hom}(\mathcal P(E)/\F,\{\pm1\}),
	\end{equation}
	where $\{\pm1\}$ is viewed as the group with two elements. 
\end{defi}
In other words, $\V$ can be canonically identified with the set of maps $\sigma:\mathcal{P}(E)\to\{\pm1\}$ such that $\sigma(F\Delta F')=\sigma(F)\sigma(F')$ for any $F,F'\subset E$, and $\sigma(\eta)=1$ if $\eta\in\F$. In fancy words, the set $\V$, seen as a group, is the Pontryagin dual of the abelian group $\mathcal P (E)/\F$.

\medskip The guiding intuition for this definition comes from taking a connected graph $G$ and looking at its natural HT structure $\F_G$. In this case, one can naturally identify $\V$ with the set $\{\pm1\}^V/\{\pm\}$ of the spin configurations on $G$ up to a global flip. Indeed, to $\upsigma\in\{\pm1\}^V$ one can associate the homomorphism $h(\upsigma)$ defined by
\begin{align*}
	\upomega \subseteq E\mapsto [{h}(\upsigma)] (\upomega)=\prod_{e=(xy)\in \upomega}\upsigma_x\upsigma_y.
\end{align*}
Note that if $\upomega\in \F_G$, then $[{h}(\upsigma)] (\upomega)=1$, thus $h(\upsigma)$ can be thought of as a homomorphism from $\mathcal P(E)/\F_G$ to $\{\pm1\}$. Furthermore, as $h(\upsigma)$ only depends on $\upsigma$ up to a global flip, $h$ can be defined as a map from $\{\pm1\}^V/\{\pm\}$ to $\mathrm{Hom}(\mathcal P(E)/\F_G,\{\pm1\})$. In the case of a connected graph, one can check that $h$ is, in fact, a bijection, as one can recover $\upsigma$ by fixing a spanning tree with a root and multiplying through the edges.

We can now define an Ising model on $E$ equipped with an HT structure $\F$.
\begin{defi}[Abstract Ising model] Let $E$ be a finite set, $\F$ be an HT structure on $E$ and positive coupling constants $J=(J_e)_{e\in E}\in (\R^+)^E$. An abstract Ising model on $(E,\F)$ with coupling constants $J$ is the probability distribution on $\V_\F$ defined by
	\begin{equation}\label{eq ising abstrait}
		\is(\upsigma) \propto e^{-H(\upsigma)},
	\end{equation}
	where the Hamiltonian is defined as
	\begin{align*}
	H(\upsigma)=-\sum_{e\in E}J_e\upsigma(e).
	\end{align*}
\end{defi}
Note that if we fix a connected graph $G=(V,E)$ and we define an abstract Ising model on the natural HT structure $\F_G$, then it is precisely the (free) classical Ising model up to a global flip through the identification $h^{-1}$. Furthermore, for $A\subseteq V$, the classical correlation functions $\langle\upsigma_A\rangle_{G,J}$ are interpreted naturally as $\langle\upsigma(F)\rangle$ for $F\subset E$ such that $\partial F=A$. (Such an $F$ exists only when $A$ has even cardinality; however, if it is not the case, the correlation $\langle\upsigma_A\rangle_{G,J}$ vanishes by spin-flip symmetry.)

Let us now write a high-temperature expansion for the abstract Ising model. This is a generalisation of the classical case, which can be found, e.g., in \cite{FV} (Section 3.7.3).
 \begin{proposition}[High-temperature expansion]
Let $\sigma$ be an abstract Ising model on $(E, \F)$ with coupling constants $J$. Then for every $F\subseteq E$
\begin{equation}\label{eq HTE for abstract Ising}
	\big\langle\sigma(F)\big\rangle_J = \frac{\sum_{\eta\in\F}w(\eta\Delta F)}{\sum_{\eta\in\F}w(\eta)},
\end{equation}
where for $F'\subseteq E$
\begin{align*}
	w(F')= \prod_{e\in F'} \tanh(J_e).
\end{align*}
 \end{proposition}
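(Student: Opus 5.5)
The plan is to mimic the classical high-temperature expansion, using only the group structure of $\mathcal P(E)/\F$ and character orthogonality on $\V_\F$.

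\textbf{Step 1: expand the Boltzmann weight.} Since $\sigma(e)\in\{\pm1\}$, we have $e^{J_e\sigma(e)}=\cosh(J_e)\bigl(1+\tanh(J_e)\sigma(e)\bigr)$. Expanding the product over $e\in E$ and using multiplicativity, $\prod_{e\in F'}\sigma(e)=\sigma(F')$, we get
\begin{align*}
e^{-H(\sigma)}=\Bigl(\prod_{e\in E}\cosh J_e\Bigr)\sum_{F'\subseteq E}w(F')\,\sigma(F').
\end{align*}
Here $\sigma(e)$ means $\sigma(\{e\})$, and $\sigma(F')=\prod_{e\in F'}\sigma(\{e\})$ follows from the homomorphism property $\sigma(F\Delta F')=\sigma(F)\sigma(F')$ applied to disjoint singletons.

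\textbf{Step 2: sum over spins.} Multiplying by $\sigma(F)$ gives $\sigma(F)\sigma(F')=\sigma(F\Delta F')$. Summing over $\sigma\in\V_\F$ reduces everything to the quantity $\sum_{\sigma\in\V_\F}\sigma(K)$ for $K\subseteq E$. The key claim is
\begin{align*}
\sum_{\sigma\in\V_\F}\sigma(K)=|\V_\F|\,\1_{K\in\F}.
\end{align*}
If $K\in\F$, every term equals $1$. If $K\notin\F$, the class $[K]$ is a nontrivial element of the finite elementary abelian $2$-group $\mathcal P(E)/\F$, which is an $\mathbb F_2$-vector space. So there is a linear functional, that is, a homomorphism $\tau\in\V_\F$, with $\tau(K)=-1$. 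Substituting $\sigma\mapsto\sigma\tau$ is a bijection of $\V_\F$, so the sum $S$ satisfies $S=-S$, hence $S=0$. This orthogonality step is the only point needing care. It is the main obstacle, though a mild one, handled by the vector-space argument just given.

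\textbf{Step 3: conclude.} Write $C=\prod_{e\in E}\cosh J_e$. Then
\begin{align*}
\sum_\sigma\sigma(F)e^{-H(\sigma)}=C|\V_\F|\sum_{F':\,F\Delta F'\in\F}w(F').
\end{align*}
Reindexing by $\eta=F\Delta F'$, so that $F'=\eta\Delta F$, gives $C|\V_\F|\sum_{\eta\in\F}w(\eta\Delta F)$. Taking $F=\emptyset$ gives the partition function $C|\V_\F|\sum_{\eta\in\F}w(\eta)$, which is positive since $\emptyset\in\F$ and $w(\emptyset)=1$. Dividing the two expressions yields \eqref{eq HTE for abstract Ising}.
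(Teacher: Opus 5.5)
Your proof is correct and follows essentially the same route as the paper. Both arguments expand $e^{J_e\sigma(e)}=\cosh(J_e)\bigl(1+\tanh(J_e)\sigma(e)\bigr)$, prove the orthogonality relation $\sum_{\sigma\in\V_\F}\sigma(K)=|\V_\F|\,\1_{K\in\F}$ via the substitution $\sigma\mapsto\sigma\tau$, and reindex by $\eta=F\Delta F'$. Your only addition is to use the $\mathbb F_2$-vector-space structure of $\mathcal P(E)/\F$ to justify that some $\tau\in\V_\F$ has $\tau(K)=-1$ when $K\notin\F$, a step the paper simply asserts.
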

 \begin{proof}
 	Let us compute $Z_J[F]$, the unnormalised numerator corresponding to the correlation indexed by $F$. We define the constant $C=\prod_{e\in E}\cosh(J_e)$ and compute
 	\begin{align*}
 		\sum_{\upsigma\in\V}\upsigma(F)e^{-H(\upsigma)} = &\sum_{\upsigma\in\V}\upsigma(F)\prod_{e\in E}e^{J_e\upsigma(e)}  \\
 		= & C\sum_{\upsigma\in\V}\upsigma(F)\prod_{e\in E}\big(1+\upsigma(e)\tanh(J_e)\big)  \\
 		= & C\sum_{\eta\subset E}w(\eta) \sum_{\upsigma\in\V}\upsigma(F\Delta\eta) \\
 		= &C \, \#\V  \sum_{\eta\in\F\Delta F}w (\eta)= C\#\V \sum_{\eta\in\F}w(\eta\Delta F).
 		\end{align*}
 	Here, in the last equality we used that $\sum_{\upsigma \in \V} \upsigma(\eta')=\#\V\cdot \1_{\{\eta'\in\F\}}$. This is clear for $\eta'\in\F$ since $\upsigma(\eta')=1$ for any $\upsigma\in\V$ by definition. If $\eta'\notin \F$, there is $\upsigma_0\in \V$ such that $\upsigma_0(\eta')=-1$, and, since $\upsigma \mapsto \upsigma_0\upsigma$ is a bijection from $\V$ to itself, we see that $\sum_{\upsigma \in \V} \upsigma(\eta')=\sum_{\upsigma \in \V} \upsigma(\eta')\upsigma_0(\eta')=-\sum_{\upsigma \in \V} \upsigma(\eta')$. Hence, the sum is zero.
 	
 \end{proof}
 
The GKS inequalities are a direct corollary of the high-temperature expansion of the abstract Ising model.
\begin{proposition}[GKS inequalities for the abstract Ising model]\label{proposition griffith abstract}
Let $\sigma$ be an abstract Ising model on $(E,\F)$. Then, for any $F,G\subset E$, one has $\langle\sigma(F)\rangle\ge0$ and $\langle\sigma(F)\sigma(G)\rangle\ge\langle\sigma(F)\rangle\cdot\langle\sigma(G)\rangle$.
\end{proposition}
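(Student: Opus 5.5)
Both inequalities follow directly from the high-temperature expansion \eqref{eq HTE for abstract Ising}, with no further input. Set $t_e=\tanh(J_e)\in[0,1)$, so that $w(F')=\prod_{e\in F'}t_e\ge0$ for every $F'\subseteq E$.

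\emph{First inequality.} The right-hand side of \eqref{eq HTE for abstract Ising} is a ratio of sums of nonnegative weights. Its denominator contains the term $w(\emptyset)=1$, because $\emptyset\in\F$. Hence $\langle\sigma(F)\rangle\ge0$.

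\emph{Second inequality.} Every $\upsigma\in\V$ is a homomorphism, so $\upsigma(F)\upsigma(G)=\upsigma(F\Delta G)$. The inequality to prove is therefore
\begin{align*}
\Big(\sum_{\eta\in\F}w(\eta\Delta F\Delta G)\Big)\Big(\sum_{\eta\in\F}w(\eta)\Big)\ge\Big(\sum_{\eta\in\F}w(\eta\Delta F)\Big)\Big(\sum_{\eta\in\F}w(\eta\Delta G)\Big).
\end{align*}
I plan to prove this by the classical duplicated-current (``switching'') argument. Expand both sides as double sums over pairs $(\eta_1,\eta_2)\in\F^2$. The key observation is that $w$ is multiplicative over edges: for sets $X,Y$,
\[
w(X)w(Y)=\prod_{e\in X\cap Y}t_e^2\prod_{e\in X\Delta Y}t_e,
\]
so the product depends only on the pair $(X\cap Y,\,X\Delta Y)$.

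On the right, the generic term is $w(\eta_1\Delta F)\,w(\eta_2\Delta G)$. Its symmetric difference is
\[
(\eta_1\Delta F)\Delta(\eta_2\Delta G)=\eta_1\Delta\eta_2\Delta F\Delta G.
\]
On the left, after reindexing, the symmetric difference of the two arguments is the same set $\eta_1\Delta\eta_2\Delta F\Delta G$. So I will fix the class $D$ of this symmetric difference and compare, within it, the possible intersections that arise.

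Carried out coordinate by coordinate, the expansion becomes $\sum$ over pairs of products of factors, one factor per edge $e$, taking values in $\{1,t_e,t_e^2\}$. Two cases occur at each edge. If $e\in D$, the factor is $t_e$ on both sides, so these edges need no comparison. If $e\notin D$, the two arguments either both contain $e$ or both miss it, giving factor $t_e^2$ or $1$.

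Concretely, I will write each pair as $(X,X\Delta D)$ and compare the multisets $\{X\}$ obtained on the two sides. On the right, $X$ ranges over $\F\Delta F$ (subject to $X\Delta D\in\F\Delta G$); on the left, over $\F\Delta F\Delta G$ and $\F$ for the two arguments respectively. Since $\F$ is a group, the sets on both sides are cosets, and this is where the comparison happens.

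The cleanest route, which I would try first, is Ginibre's algebraic trick. Define the \emph{signed} sum
\[
\sum_{\eta_1,\eta_2\in\F}\big[w(\eta_1\Delta F\Delta G)w(\eta_2)-w(\eta_1\Delta F)w(\eta_2\Delta G)\big].
\]
Then use the homomorphism representation with two independent copies $\upsigma,\upsigma'\in\V$, exactly as in the proof of \eqref{eq HTE for abstract Ising}. The quantity becomes proportional to
\[
\sum_{\upsigma,\upsigma'}\big(\upsigma(F)-\upsigma'(F)\big)\big(\upsigma(G)-\upsigma'(G)\big)\prod_e e^{J_e(\upsigma(e)+\upsigma'(e))}.
\]
Next set $\tau=\upsigma\upsigma'\in\V$, which is again a homomorphism, and rewrite. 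Each factor $\upsigma(X)-\upsigma'(X)$ equals $\upsigma(X)(1-\tau(X))$. Each Boltzmann factor expands as
\[
e^{J_e\upsigma(e)(1+\tau(e))}=\sum_{k\ge0}\frac{J_e^k}{k!}\upsigma(e)^k\big(1+\tau(e)\big)^k,
\]
a polynomial in $\upsigma(e)$ and $\tau(e)$ with nonnegative coefficients. Expanding everything, we get nonnegative coefficients multiplying terms
\[
\upsigma(K)\,\tau(L)\,\big(1-\tau(F)\big)\big(1-\tau(G)\big),
\]
where $\upsigma(K)$ comes from the $\upsigma$-dependence, including the factor $\upsigma(F)\upsigma(G)=\upsigma(F\Delta G)$.

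Summing first over $\upsigma$ at fixed $\tau$ (the change of variables $(\upsigma,\upsigma')\mapsto(\upsigma,\tau)$ is a bijection) gives $\#\V\cdot\1_{K\in\F}\ge0$, by the same character-sum fact used in the proof of \eqref{eq HTE for abstract Ising}. The remaining sum over $\tau$ is
\[
\sum_\tau\tau(L)\big(1-\tau(F)\big)\big(1-\tau(G)\big)=\sum_\tau\big[\tau(L)-\tau(L\Delta F)-\tau(L\Delta G)+\tau(L\Delta F\Delta G)\big].
\]
Each term equals $\#\V$ times the indicator that the corresponding set lies in $\F$. Nonnegativity of this bracket is the main obstacle. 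I would resolve it by noting that $\1_{\cdot\in\F}$ restricted to the coset structure is a subgroup indicator. Concretely, $(1-\tau(F))(1-\tau(G))\in\{0,4\}$ is nonnegative pointwise, and $\tau(L)$ averages to $\1_{L\in\F}$. So I should instead sum over $\upsigma$ at fixed $\tau$ to produce $\1_{K\in\F}\ge0$, and organise the remaining $\tau$-sum so that $\tau(L)$ is absorbed. Since $\tau(e)(1+\tau(e))=1+\tau(e)$, the powers of $\tau(e)$ appearing alongside $(1+\tau(e))^k$ collapse, so $L$ can be taken empty. The remaining summand $(1+\tau)^{\cdot}(1-\tau(F))(1-\tau(G))$ is then pointwise nonnegative, which gives the claim.
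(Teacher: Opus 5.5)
Your proposal is correct and follows essentially the paper's route. The first inequality is read off \eqref{eq HTE for abstract Ising} exactly as in the paper. For the second you end up with Ginibre's duplication trick: pass to $(\upsigma,\tau)$ with $\tau=\upsigma\upsigma'$, expand in powers of $\upsigma(e)$ while keeping the nonnegative weights $\prod_e(1+\tau(e))^{k_e}$ and $(1-\tau(F))(1-\tau(G))\ge0$ intact, then sum over $\upsigma$ at fixed $\tau$ using $\sum_{\upsigma\in\V}\upsigma(X)=\#\V\,\1_{X\in\F}$. This is precisely the classical proof that the paper says carries over verbatim.

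In a write-up, drop the unfinished coset-comparison paragraph. Also drop the intermediate expansion of $(1+\tau(e))^{k}$ into monomials $\tau(L)$: as you noticed, it breaks positivity, and the right move is simply never to expand those factors.
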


\begin{proof}
The first inequality follows from \eqref{eq HTE for abstract Ising}; the second one can be proved in exactly the same way as the version for the classical Ising model (see, e.g., Theorem 3.49 in \cite{FV} and the proof following it). 
\end{proof}

\begin{proposition}[Monotonicity in the HT structure]\label{proposition monotonicity in the HT structure}
Let $\F_1,\F_2$ be two HT structures on $E$, with $\F_1\subset\F_2$. Then for any coupling constants $J$ and any subset of edges $F$, one has $\langle\sigma(F)\rangle_J^{\F_1}\le\langle\sigma(F)\rangle_J^{\F_2}$.
\end{proposition}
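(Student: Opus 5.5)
The plan is to realise the model with the larger HT structure as a limit of models with the smaller structure plus extra edges carrying infinite coupling, and then conclude with the GKS inequality (Proposition \ref{proposition griffith abstract}). The route is through the high-temperature expansion \eqref{eq HTE for abstract Ising}, since the direct HT formula only compares sums over $\F_1$ and $\F_2$ and offers no obvious ordering.

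\textbf{Step 1 (reduction to one added generator).} Since $\F_1\subset\F_2$ are subgroups of $(\mathcal P(E),\Delta)$, we can pass from $\F_1$ to $\F_2$ by adding one generator at a time: $\F_1=\mathcal G_0\subset\mathcal G_1\subset\dots\subset\mathcal G_k=\F_2$ with $\mathcal G_{i+1}=\mathcal G_i\cup(\mathcal G_i\Delta \eta_{i})$. It therefore suffices to treat $\F_2=\F_1\cup(\F_1\Delta\eta_0)$ with $\eta_0\notin\F_1$.

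\textbf{Step 2 (auxiliary edge).} Enlarge the ground set to $E'=E\sqcup\{e_0\}$ and set
\[
\F':=\F_1\cup\{\eta\cup\{e_0\}:\eta\in\F_1\Delta\eta_0\}.
\]
This is a subgroup of $\mathcal P(E')$, because $\eta\mapsto \eta$ or $\eta\cup\{e_0\}$ according to its coset is a homomorphism $\F_2\to\mathcal P(E')$. Give $e_0$ coupling $K\ge0$ and keep $J$ on $E$. By \eqref{eq HTE for abstract Ising}, for $F\subset E$,
\[
\langle\sigma(F)\rangle^{\F'}_{J,K}=\frac{\sum_{\eta\in\F_1}w(\eta\Delta F)+\tanh K\sum_{\eta\in\F_1\Delta\eta_0}w(\eta\Delta F)}{\sum_{\eta\in\F_1}w(\eta)+\tanh K\sum_{\eta\in\F_1\Delta\eta_0}w(\eta)}.
\]
At $K=0$ this is $\langle\sigma(F)\rangle^{\F_1}_J$. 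As $K\to\infty$ we have $\tanh K\to1$, and the expression tends to the HT formula over $\F_1\cup(\F_1\Delta\eta_0)=\F_2$, that is, to $\langle\sigma(F)\rangle^{\F_2}_J$.

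\textbf{Step 3 (monotonicity in $K$).} The quantity $\langle\sigma(F)\rangle^{\F'}_{J,K}$ is nondecreasing in $K$, since
\[
\partial_K\langle\sigma(F)\rangle=\langle\sigma(F)\sigma(\{e_0\})\rangle-\langle\sigma(F)\rangle\langle\sigma(\{e_0\})\rangle\ge0
\]
by the second GKS inequality of Proposition \ref{proposition griffith abstract}. The derivative formula is the usual one obtained by differentiating \eqref{eq ising abstrait}. Comparing $K=0$ with $K\to\infty$ then gives the claim.

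The main point of care is Step 2: checking that $\F'$ is indeed a subgroup and that the HT expansion on $\F'$ splits exactly as written. This requires that a $\eta'\in\F'$ contains $e_0$ precisely when its $E$-part lies in the nontrivial coset. As an alternative to Steps 2–3, one could try to show directly that the function $t\mapsto \frac{a+tb}{c+td}$ with $a=\sum_{\F_1}w(\eta\Delta F)$ and so on is increasing, which amounts to $bc\ge ad$. That inequality is exactly what GKS provides, so the auxiliary-edge route is cleaner.
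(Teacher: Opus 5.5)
Your proof is correct, but it takes a different route from the paper.

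\textbf{The paper's route.} The paper never leaves $(E,\F_1)$. It writes the second GKS inequality there as \eqref{e.Switching}, namely $\sum_{\eta\in\F_1}w(\eta\Delta F)\sum_{\eta\in\F_1}w(\eta\Delta H)\le\sum_{\eta\in\F_1}w(\eta)\sum_{\eta\in\F_1}w(\eta\Delta H\Delta F)$. It then sums this over a complete set of representatives $H$ of $\F_2/\F_1$. Since $\F_2$ is the disjoint union of the cosets $\F_1\Delta H$, the inequality $\langle\sigma(F)\rangle^{\F_1}\le\langle\sigma(F)\rangle^{\F_2}$ follows directly from the HT expansion. Your closing ``alternative'' ($bc\ge ad$) is exactly this argument for a single nontrivial coset. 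Summing over all cosets at once, as the paper does, makes your Step~1 induction unnecessary.

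\textbf{Your route.} You realise the $\F_2$-model as the $K\to\infty$ limit of a model on the enlarged structure $\F'$ over $E\sqcup\{e_0\}$, with $K=0$ recovering $\F_1$. Monotonicity then comes from $\partial_K\langle\sigma(F)\rangle\ge0$. The details hold up:
\begin{itemize}
\item $\F'$ is the image of an injective homomorphism, so it is an HT structure.
\item The HT sum splits exactly as you write.
\item The denominator is at least $w(\emptyset)=1$, so the ratio is continuous in $t=\tanh K\in[0,1]$ and both endpoint values can be read off the formula.
\item GKS applies on $(E',\F')$ for $K>0$.
\end{itemize}

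\textbf{Trade-off.} Your version buys a conceptual picture. It makes literal the edge-contraction intuition in the paper's remark: an infinitely strong auxiliary coupling imposes the extra relations. It also gives monotonicity along the whole interpolation, not just between the endpoints. The cost is an enlarged ground set, a limiting argument, and an induction over generators. The induction can be avoided by adding one auxiliary edge per generator of $\F_2/\F_1\cong(\mathbb Z/2)^k$ and sending all their couplings to infinity. The paper's version, by contrast, is a two-line algebraic consequence of GKS on $\F_1$ alone.
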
 

\begin{remark}
To fix ideas, take a graph $G_1=(V_1,E)$ and define $G_2=(V_2,E)$ to be constructed by contracting the edges in $F'$. In this case, the natural HT structures can be compared as $\F_{G_1}\subseteq \F_{G_2}$ and thus our inequality corresponds to the fact that contracting edges increases correlation functions. The fact that $\F^*_{G_2}\subset\F^*_{G_1}$ and the subsequent inequality corresponds this time to the fact that on the abstract dual one has ``morally'' less edges by contracting edges on the primal graph.
\end{remark}

\begin{proof}
	Let us start by using the GKS inequality to see that for the HT structure $\F_1$ and any subset of edges $H,F$ one has
	\begin{equation}\label{e.Switching}
		\sum_{\eta\in\F_1}w(\eta\Delta F)\sum_{\eta\in\F_1}w(\eta\Delta H)\le 
		\sum_{\eta\in\F_1}w(\eta)\sum_{\eta\in\F_1}w(\eta\Delta H\Delta F).
	\end{equation}
	Take $\mathcal H=\{H_1,\ldots,H_k\}\subset\mathcal P (E)$ to be a complete set of representatives of the group $\F_2/\F_1$, and sum \eqref{e.Switching} over all $H\in \mathcal H$ to obtain
	\begin{equation*}
		\sum_{\eta\in\F_1}w(\eta\Delta F)\sum_{H\in \mathcal H}\left(\sum_{\eta\in\F_1}w(\eta\Delta H)\right)\le 
		\sum_{\eta\in\F_1}w(\eta)\sum_{H\in \mathcal H}\left(\sum_{\eta\in\F_1}w(\eta\Delta H\Delta F)\right).
		\end{equation*}
	We conclude by using \eqref{eq HTE for abstract Ising} and noting that
	 \begin{equation*}
\langle\sigma(F)\rangle_J^{\F_2} = \frac{\sum_{H\in \mathcal H}\sum_{\eta\in\F_1}w(\eta\Delta H\Delta F)}{\sum_{H\in \mathcal H}\sum_{\eta\in\F_1}w(\eta\Delta H)}.
\end{equation*}
\end{proof}

\subsubsection{The model $\edag$ and the abstract Ising model on the dual structure}

In this section, we show that the percolation model $\edag_J=\en_{J,p^{\dagger}}$ is related to the abstract Ising model generated by the dual HT structure of $G$. Of course, in the case of a planar graph, one can think of $\edag_J$ as a subset of the dual graph.

For $F\subset E$, we show that $\edag_J(\A_F)$ is given by a particularly nice formula. Because of what follows, it is better to consider more generally the measure $\en_{J,p}$ and to specialize to $p=1-q^2=1-e^{-4J}$ later on. We recall that for a subset $F\subseteq E$ and a function $s: E\to \R$, we write $s_F=\prod_{e\in F} s_e$.
\begin{proposition}\label{proposition formula for en_J cap ber(1-p) (AF)}
Let $G=(V,E)$ be a finite graph with coupling constants $J=(J_e)_{e\in E}$, let $F\subset E$ be a subset of edges, and let $p\in[0,1]^E$ be a parameter. Writing $q=e^{-2J}$ and $r=1-p$, we have \begin{equation}\label{eq moebius function of q}
\en_{J,p}(\A_F) = \frac{\sum_{\eta\in\F^*_G}q_\eta \times r_{F\backslash\eta}}{\sum_{\eta\in\F^*_G}q_\eta}.
\end{equation}
\par Furthermore, when $p=p^\dagger=1-q^2$, one has \begin{equation}\label{eq formula for edag}
\edag_J(\A_F)=\en_{J,p^\dagger}(\A_F)= \Big(\prod_{e\in F}q_e\Big)\langle\sigma(F)\rangle_{J^*},
\end{equation}
where $\sigma$ is an abstract Ising model on the dual HT structure $(E,\F_G^*)$, and the coupling constants $J^*$ are dual to $J$, i.e. defined by $\tanh(J^*)=e^{-2J}$.
\end{proposition}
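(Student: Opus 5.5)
The plan is to compute $\en_{J,p}(\A_F)$ directly from the weight formula \eqref{eq weight configuration energy field}, conditioning on the energy field first and then on the independent Bernoulli thinning. Write $\omega=\xi\cap\eta$ with $\xi\sim\en_J$ and $\eta\sim\ber{p}$ independent. The event $\A_F$ means that every $e\in F$ is closed in $\omega$. Given $\xi$, an edge $e\in F$ that is already closed in $\xi$ costs nothing. An edge $e\in F$ that is open in $\xi$ must be removed by $\eta$, which happens with probability $r_e=1-p_e$. Hence
\begin{align*}
\en_{J,p}(\A_F)=\en_J\big[r_{F\cap\xi}\big].
\end{align*}

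Next I would parametrize the energy field by its set of closed edges $\eta:=\xi^c$. By the definition \eqref{e.def_F*}, $\eta$ ranges exactly over $\F^*_G$. Modulo the global spin flip, which is a $2$-to-$1$ map and affects numerator and denominator alike, its weight is proportional to $q_\eta$ by \eqref{eq weight configuration energy field}. Since $F\cap\xi=F\setminus\eta$, this gives \eqref{eq moebius function of q} immediately, with normalization $Z_J=\sum_{\eta\in\F^*_G}q_\eta$.

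For \eqref{eq formula for edag}, I would set $r=q^2$ and factor $q_F$ out of each numerator term. Indeed
\begin{align*}
q_\eta\, q^2_{F\setminus\eta}=q_F\, q_{\eta\setminus F}\, q_{F\setminus\eta}=q_F\, q_{\eta\Delta F},
\end{align*}
so the numerator equals $q_F\sum_{\eta\in\F^*_G}q_{\eta\Delta F}$. With $\tanh(J^*_e)=q_e$ the weight $w$ of the abstract Ising model satisfies $w(H)=q_H$. The high-temperature expansion \eqref{eq HTE for abstract Ising} applied to the HT structure $\F^*_G$ then identifies the remaining ratio $\sum_{\eta\in\F^*_G}q_{\eta\Delta F}\big/\sum_{\eta\in\F^*_G}q_\eta$ with $\langle\sigma(F)\rangle_{J^*}$.

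There is no real obstacle here. The only points needing care are the bookkeeping identity $F\cap\xi=F\setminus\xi^c$, the fact that the global spin flip cancels in the ratio, and the check that $\F^*_G$ is exactly the image of the closed-edge map, which holds by definition. Everything else is the algebraic identity $q_\eta q^2_{F\setminus\eta}=q_Fq_{\eta\Delta F}$ followed by one application of the high-temperature expansion.
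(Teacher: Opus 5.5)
Your proposal is correct and follows the paper's proof essentially verbatim. You condition on $\xi\sim\en_J$ to get $\en_J[r_{F\setminus\xi^c}]$, use that the closed-edge set ranges over $\F^*_G$ with weight proportional to $q_\eta$, and then combine the identity $q_\eta q^2_{F\setminus\eta}=q_Fq_{\eta\Delta F}$ with the high-temperature expansion for $w(H)=q_H$.

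One tiny imprecision: the map $\sigma\mapsto\xi(\sigma)$ is $2^{k}$-to-one, where $k$ is the number of connected components of $G$, and is $2$-to-one only when $G$ is connected. The cancellation you invoke still holds because it is a group homomorphism with fibres of equal size, and in any case \eqref{eq weight configuration energy field} already gives the weights directly.
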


\begin{proof}
Upon conditioning first on $\xi\sim\en_J$, one sees that 
\begin{align*}
	\en_{J,p}(\A_F)=\sum_{\eta\subseteq E}\en_J(E\setminus \eta)r_{F\backslash\eta}.
\end{align*}
Note that \eqref{eq moebius function of q} follows because, by \eqref{e.def_F*}, $\F^*_G$ is the support of $\en_J$ and the weight of $\eta\in\F^*_G$ for $\en_J$ is proportional to $q_\eta$. When $r=q^2$, \eqref{eq formula for edag} follows from the high-temperature expansion \eqref{eq HTE for abstract Ising} and the fact that $q_\eta \times q^2_{F\setminus \eta}=q_F\times q_{\eta\Delta F}$.
\end{proof}

Fix $e\in E$. We are now interested in the function $q_e\mapsto \en_{J,p}(\A_F)$. In fact, we show it is a M\"obius map, i.e., it is of the form $q_e\mapsto \frac{a q_e + b}{c q_e + d}$.
\begin{coro}\label{coro moebius map and derivative}
	In the same context as Proposition \ref{proposition formula for en_J cap ber(1-p) (AF)}, fix $e\in E$. The function $q_e \mapsto \en_{J,p}(\A_F)$ is a M\"obius map. In particular, its derivative is of constant sign, and when $q$ and $p$ are related by the relation $p=1-q^2$, its sign is equal to the sign of \begin{equation}\label{derivative of the probability}
\left(\sum_{\eta\in\F, e\not\in\eta}q_\eta\right)\left(\sum_{\eta\in\F, e\in\eta}q_{\eta\Delta F}\right)- \left(\sum_{\eta\in\F, e\in\eta}q_\eta\right)\left(\sum_{\eta\in\F, e\not\in\eta}q_{\eta\Delta F}\right),
\end{equation}
where $\F$ is a shorthand for the dual HT structure $\F_G^*$ on $G$.
\end{coro}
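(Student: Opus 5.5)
Starting from formula \eqref{eq moebius function of q} of Proposition \ref{proposition formula for en_J cap ber(1-p) (AF)}, I will split each sum over $\F=\F^*_G$ according to whether $e\in\eta$ or not. Fix all $q_f$, $r_f$ with $f\neq e$. The denominator is then $\sum_{e\notin\eta}q_\eta + q_e\sum_{e\in\eta}q_{\eta\setminus\{e\}}$, which is affine in $q_e$. In the numerator, $r_{F\setminus\eta}$ does not depend on $q_e$ as long as $r$ is held fixed. So the numerator is also affine in $q_e$, and $q_e\mapsto \en_{J,p}(\A_F)$ is of the form $(aq_e+b)/(cq_e+d)$ with $cq_e+d>0$. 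The derivative of such a map equals $(ad-bc)/(cq_e+d)^2$, so it has constant sign, namely the sign of $ad-bc$. This settles the first claim for fixed $p$.

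For the coupled case $p=1-q^2$, the parameter $r_e=q_e^2$ also moves with $q_e$, and the naive count gives a quadratic. I would therefore use the second form \eqref{eq formula for edag}, which rewrites the quantity as
\begin{align*}
\en_{J,p^\dagger}(\A_F)=\frac{\sum_{\eta\in\F}q_\eta q^2_{F\setminus\eta}}{\sum_{\eta\in\F}q_\eta}=q_F\frac{\sum_{\eta\in\F}q_{\eta\Delta F}}{\sum_{\eta\in\F}q_\eta}.
\end{align*}
Split again according to $e\in\eta$. The denominator is $D_0+q_eD_1$, where $D_0=\sum_{e\notin\eta}q_\eta$ and $q_eD_1=\sum_{e\in\eta}q_\eta$.

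The numerator factor needs a case split on whether $e\in F$. If $e\notin F$, then $e\in\eta\Delta F$ exactly when $e\in\eta$, so $\sum_{\eta\in\F}q_{\eta\Delta F}=N_0+q_eN_1$. The prefactor $q_F$ is then constant in $q_e$, and the expression is a Möbius map in $q_e$. If $e\in F$, then $e\in\eta\Delta F$ exactly when $e\notin\eta$, so the sum equals $q_eN_0'+N_1'$. The prefactor $q_F$ contributes one more factor of $q_e$, giving $q_e(q_eN_0'+N_1')/(D_0+q_eD_1)$. This is not obviously Möbius, so I would write it as $q_F q_e^{-1}\cdot\frac{q_e N_0'+N_1'}{D_0/q_e+D_1}$ and look for a cleaner route. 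The intended route is presumably to use the group structure: the map $\eta\mapsto\eta\Delta F$ is a bijection of the coset $\F\Delta F$, not of $\F$, so the key check is how $q_{\eta\Delta F}$ factors. This case is the step I expect to be the main obstacle. If the Möbius claim fails there, I would settle for proving directly that the derivative has constant sign.

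Given the affine forms, the sign computation is mechanical. The derivative of $(\alpha+q_e\beta)/(D_0+q_eD_1)$ has the sign of $\beta D_0-\alpha D_1$. Here $D_0$ and $q_eD_1$ are the two $\eta$-sums $\sum_{e\notin\eta}q_\eta$ and $\sum_{e\in\eta}q_\eta$. Likewise $\alpha$ and $q_e\beta$ are the two corresponding sums of $q_{\eta\Delta F}$, grouped by whether $e\in\eta$. Multiplying $\beta D_0-\alpha D_1$ by $q_e>0$ gives exactly \eqref{derivative of the probability}. The positive prefactor $q_F$ does not change the sign in the case $e\notin F$. In the case $e\in F$, I would verify that the additional factor of $q_e$ combines consistently, so that the sign is still given by the same cross-difference, to be checked by direct differentiation.

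Finally, I would note that $q_e$ is a decreasing function of $J_e$, so the sign of the derivative in $J_e$ is opposite to that of \eqref{derivative of the probability}. This is the form that will later be fed into the GKS-type inequalities of Propositions \ref{proposition griffith abstract} and \ref{proposition monotonicity in the HT structure}.
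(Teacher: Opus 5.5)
Your first paragraph is correct and matches the paper's argument for the Möbius property. The gap is in how you read the second half of the statement. ``When $p=1-q^2$'' refers to the derivative of that same map with $p$ frozen, evaluated at a point where $r=1-p$ equals $q^2$. It does not ask you to let $r_e=q_e^2$ move with $q_e$. This is exactly how the corollary is used in Claim \ref{claim.weak monotonicity on one edge}: there $\tilde p=p^\dagger(J)$ is frozen while $x$ ranges over $[q'_e,q_e]$. A statement about the coupled map $q_e\mapsto\edag_J(\A_F)$ would only compare $\en_{J,p^\dagger(J)}$ with $\en_{J',p^\dagger(J')}$, which is weaker than what is needed there.

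Under your coupled reading, the claims you set out to verify for $e\in F$ are actually false, so the step you flag as the main obstacle cannot be completed. On $C_3$ with $e=0$ and $F=\{0\}$ one gets $\edag_J(\A_{\{0\}})=q_0\frac{q_0(1+q_1q_2)+q_1+q_2}{1+q_1q_2+q_0(q_1+q_2)}$. This is quadratic over linear with no common root when $q_1,q_2\in(0,1)$, hence not Möbius. In general, for $e\in F$ and in your notation, the coupled derivative is $q_{F\setminus\{e\}}\,(q_e^2N_0'D_1+2q_eN_0'D_0+N_1'D_0)/(D_0+q_eD_1)^2$, whereas \eqref{derivative of the probability} equals $D_0N_1'-q_e^2D_1N_0'$. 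These are different quantities: for a loop edge $e\in F$, the second vanishes while the first is strictly positive. Your ``mechanical'' grouping also breaks for $e\in F$, since then $\sum_{e\notin\eta}q_{\eta\Delta F}$ carries a factor $q_e$ and is not a constant $\alpha$.

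The missing step is short and needs no case split. With $r$ frozen, read off from \eqref{eq moebius function of q}
\[
q_e(ad-bc)=\Big(\sum_{\eta\in\F,\,e\notin\eta}q_\eta\Big)\Big(\sum_{\eta\in\F,\,e\in\eta}q_\eta r_{F\setminus\eta}\Big)-\Big(\sum_{\eta\in\F,\,e\in\eta}q_\eta\Big)\Big(\sum_{\eta\in\F,\,e\notin\eta}q_\eta r_{F\setminus\eta}\Big),
\]
and only then substitute $r=q^2$. The identity $q_\eta q^2_{F\setminus\eta}=q_Fq_{\eta\Delta F}$ lets you factor out $q_F>0$, leaving exactly \eqref{derivative of the probability}; this is the paper's proof. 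Your computation for $e\notin F$ worked only because there $r_e$ does not appear in \eqref{eq moebius function of q}, so the frozen and coupled maps coincide.
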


\begin{proof}
	From \eqref{eq moebius function of q} it is clear that the function is a M\"obius map. For such maps, the derivative is $\frac{ad-bc}{(cq_e+d)^2}$; in particular its sign is constant and is the sign of $q_e(ad-bc)$ when one writes $\en_{J,p}(\A_F)=\frac{aq_e+b}{cq_e+d}$, which is \begin{equation*}
		\left(\sum_{\eta\in\F, e\not\in\eta}q_\eta\right)\cdot\left(\sum_{\eta\in\F, e\in\eta}q_{\eta}r_{F\backslash\eta}\right)- \left(\sum_{\eta\in\F, e\in\eta}q_\eta\right)\cdot\left(\sum_{\eta\in\F, e\not\in\eta}q_{\eta}r_{F\backslash\eta}\right).
	\end{equation*}
In the case where $p=1-q^2$, i.e. $r=q^2$, we can use once again $q_\eta q_{F\backslash\eta}^2=q_Fq_{\eta\Delta F}$ to factor the term above by $q_F>0$ so that the relevant sign is the sign of \eqref{derivative of the probability}.
\end{proof}

\subsubsection{A weaker version of dual weak domination at $p^\dagger$}
\label{section weaker statements at the level of edag}
In this section, we prove the following inequalities, which will be instrumental in proving the dual weak (hence weak$^\dagger$) domination and FKG for $p<p^\dagger$.

\begin{proposition}[Inequalities at level $p^\dagger$]\label{proposition inequalities for edag}
Take $J \in (\R^+)^E$ and $p^\dagger=p^\dagger(J)=1-e^{-4J}$. Then \begin{itemize}
\item \emph{(Almost dual weak FKG)}
For any $F,F'\subseteq E$ \begin{align}\label{eq fkg weak * for edag}
\edag_J(\A_F\cap\A_{F'})\ge \edag_J(\A_F)\edag_J(\A_{F'}).
\end{align}

\item \emph{(Almost dual weak domination)} For any $J'\ge J$ and $F\subset E$, we have
\begin{equation}\label{eq weak * monotonicity for edag}
\en_{J, p^\dagger}(\A_F)\ge\en_{J',p^\dagger}(\A_F).
\end{equation}
\end{itemize}
\end{proposition}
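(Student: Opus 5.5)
Both items reduce, via formula \eqref{eq formula for edag}, to statements about the abstract Ising model on the dual HT structure $(E,\F^*_G)$ with couplings $J^*$, where $\tanh(J^*)=q=e^{-2J}$. For every $F\subset E$,
\[
\edag_J(\A_F)=q_F\,\langle\sigma(F)\rangle_{J^*}.
\]

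\textbf{Almost dual weak FKG.} As in Proposition \ref{proposition weak FKG}(ii), it suffices to treat disjoint $F,F'$: in the general case, replacing $F'$ by $F'\setminus F$ leaves the left-hand side unchanged and can only decrease $\edag_J(\A_{F'})$, since $\A_{F'}\subseteq\A_{F'\setminus F}$. For disjoint sets, $\A_F\cap\A_{F'}=\A_{F\cup F'}$ and $q_{F\cup F'}=q_Fq_{F'}$. Moreover $\sigma(F\cup F')=\sigma(F\Delta F')=\sigma(F)\sigma(F')$ because $\sigma$ is a homomorphism. Inequality \eqref{eq fkg weak * for edag} therefore becomes
\[
\langle\sigma(F)\sigma(F')\rangle_{J^*}\ge\langle\sigma(F)\rangle_{J^*}\langle\sigma(F')\rangle_{J^*},
\]
which is exactly the second GKS inequality, Proposition \ref{proposition griffith abstract}.

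\textbf{Almost dual weak domination.} Since $J'$ is obtained from $J$ by raising coordinates one at a time, it suffices to show that $\en_{J,p}(\A_F)$, with $p=p^\dagger(J)$ held fixed, is nondecreasing in each $q_e$, equivalently nonincreasing in $J_e$. Here $p$ is frozen while $J$ varies, so the relation $r=q^2$ holds only at the starting point. I would handle this through the M\"obius structure of Corollary \ref{coro moebius map and derivative}: for fixed $r$, the map $q_e\mapsto\en_{J,p}(\A_F)$ is a M\"obius map, so its derivative has constant sign on the whole range of $q_e$. It is therefore enough to determine that sign at the single point where $r=q^2$, i.e.\ to show that \eqref{derivative of the probability} is nonnegative.

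Write $\F=\F^*_G$, $S_0=\sum_{\eta\in\F,\,e\notin\eta}q_\eta$, $S_1=\sum_{\eta\in\F,\,e\in\eta}q_\eta$, and let $T_0,T_1$ be the corresponding sums of $q_{\eta\Delta F}$. The required inequality is $S_0T_1\ge S_1T_0$. Since $q_\eta=\prod_{f\in\eta}\tanh J^*_f$, these are exactly high-temperature weights. Let $\F_e=\{\eta\in\F:e\notin\eta\}$; it is a subgroup of $\F$, hence itself an HT structure. The cleanest way to express the $e$-restricted sums is through $\langle\sigma(F)\rangle$ and $\langle\sigma(F)\sigma(\{e\})\rangle$, using the symmetric combinations $S_0\pm S_1$ and $T_0\pm T_1$. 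For the expansion \eqref{eq HTE for abstract Ising} these are $\sum_\eta(\pm1)^{\1_{e\in\eta}}w(\eta)$, and the sign twist $(-1)^{\1_{e\in\eta}}$ can be absorbed by flipping the sign of the coupling on $e$.

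This suggests two ways to finish. The first is to rewrite the inequality in terms of correlations with the coupling on $e$ equal to $+J^*_e$ and $-J^*_e$, and deduce it from GKS. The second, which I would try first, is to decompose $S_0T_1-S_1T_0$ over cosets of $\F_e$ in $\F$ and compare it with the switching inequality \eqref{e.Switching}, used with $\F_1=\F_e$ and $H$ equal to a coset representative containing $e$ (as in the proof of Proposition \ref{proposition monotonicity in the HT structure}). The coset decomposition should turn the difference into a sum of terms of the form
\[
\sum_{\F_e}w(\cdot\Delta H\Delta F)\sum_{\F_e}w(\cdot)-\sum_{\F_e}w(\cdot\Delta F)\sum_{\F_e}w(\cdot\Delta H),
\]
which \eqref{e.Switching} shows are nonnegative. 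If $\F$ contains no set containing $e$, then $S_1=T_1=0$ and the inequality is trivial. I expect this bookkeeping, correctly matching the coset sums to \eqref{derivative of the probability} with the right sign and keeping track of the factor $q_e$ inside $S_1$, to be the main obstacle.
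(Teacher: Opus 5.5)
Your proof of \eqref{eq fkg weak * for edag} matches the paper's: reduce to disjoint sets, apply \eqref{eq formula for edag}, then use the second GKS inequality. Your second route to $S_0T_1\ge S_1T_0$ is also the paper's argument, and it involves no bookkeeping. The set $\F_e$ is the kernel of the homomorphism $\eta\mapsto\1_{e\in\eta}$ on $(\F,\Delta)$, so when $\F_e\neq\F$ it has index $2$. Then $\F=\F_e\sqcup(\F_e\Delta\eta_0)$, and $S_0T_1-S_1T_0$ is a single instance of \eqref{e.Switching} with $\F_1=\F_e$ and $H=\eta_0$. Your first route would not work as stated, because flipping the sign of $J^*_e$ leaves the ferromagnetic setting in which GKS applies.

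The gap is in reducing a general $J'\ge J$ to one-edge moves. You freeze $p=p^\dagger(J)$, i.e.\ $r=q^2$ with $q=q(J)$, and raise couplings one edge at a time. But the M\"obius argument only gives the sign of $\partial_{q_e}$ along a coordinate line through a point where $r=q^2$ holds on $F$. That relation is what yields $q_\eta r_{F\setminus\eta}=q_Fq_{\eta\Delta F}$ and hence \eqref{derivative of the probability}; only $r|_F$ enters \eqref{eq moebius function of q}. Once you lower $q_{e_1}$ to $q'_{e_1}$ for some $e_1\in F$, every later line has $r_{e_1}=q_{e_1}^2>(q'_{e_1})^2$. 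On those lines you have no control of the sign. Monotonicity along the coordinate lines through the single corner $q(J)$ does not give $\en_{J,p^\dagger}(\A_F)\ge\en_{J',p^\dagger}(\A_F)$ when $J'\neq J$ on two or more edges of $F$.

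The missing step is how the paper closes the chain. Apply the one-edge claim at each intermediate coupling $J(k)$ with its own threshold $p(k)=p^\dagger(J(k))$. This gives $\en_{J(k),p(k)}(\A_F)\ge\en_{J(k+1),p(k)}(\A_F)$ for all $F$ simultaneously. Such a family of inequalities is preserved under intersection with an independent $\ber{p(1)/p(k)}$ (Proposition \ref{proposition strict inequalities below large inequalities}, using $p(1)\le p(k)$). This brings every step down to the common parameter $p(1)=p^\dagger(J)$, and only then do you chain.
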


The proofs of these two inequalities rely on the theory of the abstract Ising model. We start by proving \eqref{eq fkg weak * for edag} which is shorter.

\begin{proof}[Proof of \eqref{eq fkg weak * for edag}]
We may assume that $F,F'$ are disjoint: otherwise, replacing $F'$ by $F'\backslash(F\cap F')$ yields a stronger inequality. Using \eqref{eq formula for edag} and the fact that $\A_F\cap\A_{F'}=\A_{F\Delta F'}$, we see that \eqref{eq fkg weak * for edag} is equivalent to $\langle\sigma(F\Delta F')\rangle_{J^*}\ge\langle\sigma(F)\rangle_{J^*}\langle\sigma(F')\rangle_{J^*}$, which follows from the GKS inequality (Proposition \ref{proposition griffith abstract}).
\end{proof}

We now turn to the proof of \eqref{eq weak * monotonicity for edag}. It also relies on Corollary \ref{coro moebius map and derivative}.

\begin{proof}[Proof of \eqref{eq weak * monotonicity for edag}]
It is convenient first to prove the special case in which $J'$ is larger than $J$ on precisely one edge, as stated in the following claim:

\begin{claim}\label{claim.weak monotonicity on one edge}For any fixed $e\in E$, \eqref{eq weak * monotonicity for edag} is true if $J'=J$ on every edge but $e$.
\end{claim}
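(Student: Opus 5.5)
The plan is to study the single-variable function $q_e\mapsto \en_{J,p}(\A_F)$ with the thinning parameter held fixed at $p=p^\dagger(J)$ (as in \eqref{eq weak * monotonicity for edag}, only the energy field changes). By Corollary \ref{coro moebius map and derivative} this function is a Möbius map. Its derivative therefore has a constant sign on the whole range of $q_e$. The coefficients $a,b,c,d$ depend on $r=1-p$, which is fixed, but not on $q_e$. So the sign of $ad-bc$ can be read off at the original value of $q_e$, where the relation $r=q^2$ holds. There it equals the sign of the expression \eqref{derivative of the probability}.

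Since $q_e=e^{-2J_e}$ is decreasing in $J_e$, the claim reduces to showing that \eqref{derivative of the probability} is nonnegative. Then $\en_{J,p}(\A_F)$ is nondecreasing in $q_e$, hence nonincreasing in $J_e$ on all of $[J_e,J'_e]$.

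To show this nonnegativity, I would write $t:=q_e=\tanh(J^*_e)$ and use the notation $Z_i=\sum_{\eta\in\F,\ \1_{e\in\eta}=i}w(\eta)$ and $N_i=\sum_{\eta\in\F,\ \1_{e\in\eta}=i}w(\eta\Delta F)$. Here $w=q_\cdot$ is the high-temperature weight of the abstract Ising model on $(E,\F^*_G)$ with couplings $J^*$. The expression is then $Z_0N_1-Z_1N_0$. Note that $Z_1$ is proportional to $t$ while $Z_0$ does not involve $t$. I then split into two cases according to whether $e\in F$.

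\emph{Case $e\notin F$.} Here $N_1\propto t$ and $N_0$ is $t$-free. A direct computation from \eqref{eq HTE for abstract Ising} gives $Z_0N_1-Z_1N_0 = t\,Z^2\,\partial_t\langle\sigma(F)\rangle_{J^*}$. Since $t$ is increasing in $J^*_e$, the sign is that of $\partial_{J^*_e}\langle\sigma(F)\rangle=\cov(\sigma(F),\sigma(\{e\}))$. This is nonnegative by the second GKS inequality, Proposition \ref{proposition griffith abstract}.

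\emph{Case $e\in F$.} The roles flip: $N_0\propto t$ and $N_1$ is $t$-free. I would write $F=F_0\sqcup\{e\}$ and use $\sigma(F)\sigma(\{e\})=\sigma(F_0)$. The goal is to re-express $Z_0N_1-Z_1N_0$, divided by $Z^2$, as a nonnegative combination of terms of the form $\langle\sigma(F_0)\rangle-\langle\sigma(F)\rangle\langle\sigma(\{e\})\rangle$, possibly together with the derivative of $\langle\sigma(F_0)\rangle$ in $t$. Each such term is nonnegative by GKS. Concretely, I would expand $\langle\sigma(F_0)\rangle$, $\langle\sigma(F)\rangle$ and $\langle\sigma(\{e\})\rangle$ via \eqref{eq HTE for abstract Ising}, each split according to $e\in\eta$, and match terms.

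I expect this second case to be the main obstacle: finding the right GKS-type identity that reproduces exactly $Z_0N_1-Z_1N_0$. If a clean identity does not appear, my fallback is to replace $F$ by $F\Delta\{e\}$ inside the sums via the bijection $\eta\mapsto\eta$ and to track the powers of $t$ explicitly. That reduces the case to the computation already done for $e\notin F$.
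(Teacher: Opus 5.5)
Your reduction to the sign of \eqref{derivative of the probability} matches the paper. Your case $e\notin F$ is correct: there $Z_0N_1-Z_1N_0=t\,Z^2\,\partial_t\langle\sigma(F)\rangle_{J^*}$, and GKS for the pair $\sigma(F),\sigma(\{e\})$ gives the sign.

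The gap is the case $e\in F$, which you leave open. The covariance you single out, $\langle\sigma(F_0)\rangle-\langle\sigma(F)\rangle\langle\sigma(\{e\})\rangle=\partial_{J^*_e}\langle\sigma(F)\rangle$, expands to a positive multiple of $Z_0N_0-Z_1N_1$. That is not the quantity you need. Your fallback is the right idea, but it is not an exact reduction to the first case. Set $M_i:=\sum_{\eta\in\F,\ \1_{e\in\eta}=i}q_{\eta\Delta F_0}$. Then $N_1=M_1/t$ and $N_0=tM_0$, hence
\[
Z_0N_1-Z_1N_0=\frac1t\big(Z_0M_1-Z_1M_0\big)+\Big(\frac1t-t\Big)Z_1M_0 .
\]
The first term equals $Z^2\partial_t\langle\sigma(F_0)\rangle$ and is nonnegative by your first case applied to $F_0$. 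The second term needs the extra input $t=q_e\le 1$. It is not a negligible correction: for $F=\{e\}$ one has $M_i=Z_i$, so the first term vanishes identically and all the positivity comes from $(t^{-1}-t)Z_0Z_1$. Once this line is added, your argument is complete.

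For comparison, the paper avoids the case split. It applies GKS not in $\F$ but in the subgroup $\F^e=\{\eta\in\F:\ e\notin\eta\}$, to $\sigma(\eta_0)$ and $\sigma(F)$ for some $\eta_0\in\F\setminus\F^e$. Since $\F=\F^e\sqcup(\F^e\Delta\eta_0)$, the high-temperature expansion of $\langle\sigma(\eta_0\Delta F)\rangle^{\F^e}\ge\langle\sigma(\eta_0)\rangle^{\F^e}\langle\sigma(F)\rangle^{\F^e}$ is literally $Z_0N_1\ge Z_1N_0$ for every $F$. No use of $q_e\le 1$ is needed. Your route reads the first case as monotonicity of the dual correlations in the dual coupling $J^*_e$, which is more intuitive, at the cost of a separate treatment of $e\in F$.
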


\begin{proof}[Proof of the claim]
Take $e\in E$, recall the notation $q=e^{-2J}$ and $q'=e^{-2J'}$, and note that, by assumption, $q_{\tilde e}=q'_{\tilde e}$ for all $\tilde e\in E\backslash\{e\}$, and $q_{e}>q_e'$. Set $\tilde p=p^\dagger(J)=1-q^2$ and define $f$ on $[q'_e,q_e]$ by $f(x)=\en_{J(x),\tilde p}(\A_F)$, where the coupling constants $J(x)$ are defined as \begin{equation*}
J(x)_{e'} = \begin{cases} 
\quad J_{e'} & \text{ if } e'\neq e \\
 -\log(x)/2 &\text{ if } e'=e \quad\quad \text{so that }x=\exp(-2J(x)_e).
\end{cases}
\end{equation*}
In terms of this, what we want to show is precisely $f(q_e)\ge f(q'_e)$, which would follow from the fact that the function $x\mapsto f(x)$ is increasing. By Corollary \ref{coro moebius map and derivative}, $f$ is a M\"obius map and hence its derivative has constant sign. Evaluating this sign at $x=q_e$, where $\tilde p=1-q^2$, shows that it is the sign of
 
\begin{equation}\label{inequality to be proven}
\left(\sum_{\eta\in\F, e\not\in\eta}q_\eta\right)\left(\sum_{\eta\in\F, e\in\eta}q_{\eta\Delta F}\right)- \left(\sum_{\eta\in\F, e\in\eta}q_\eta\right)\left(\sum_{\eta\in\F, e\not\in\eta}q_{\eta\Delta F}\right) \ge 0.
\end{equation}

To prove this last inequality, define $\F^e=\{\eta\in\F:\ e\not\in\eta\}$ and note that if $\F^e=\F$, it is trivial as two of the sums are zero. So assume $\F^e\neq \F$ and take $\eta_0\in\F\setminus\F^e$. By the GKS inequality (Proposition \ref{proposition griffith abstract}) for the HT structure $\F^e$ on $E$ with coupling constants $J^*$, we have \begin{align*}
	\langle\sigma(\eta_0\Delta F)\rangle^{\F^e}_{J^*}\ge\langle\sigma(\eta_0)\rangle^{\F^e}_{J^*}\langle\sigma(F)\rangle^{\F^e}_{J^*}.
\end{align*}
This inequality can be developed and expanded using the high-temperature expansion \eqref{eq HTE for abstract Ising} to become
\begin{equation*}
\left(\sum_{\eta\in\F^e}q_\eta\right)\left(\sum_{\eta\in\F^e}q_{\eta\Delta\eta_0\Delta F}\right)- \left(\sum_{\eta\in\F^e}q_{\eta\Delta\eta_0}\right)\left(\sum_{\eta\in\F^e}q_{\eta\Delta F}\right) \ge 0.
\end{equation*}
Since $\F=\F^e\bigsqcup\left(\F^e\Delta\eta_0\right)$ (indeed, configurations containing $e$ belong to $\F^e\Delta\eta_0$, whereas those not containing $e$ belong to $\F^e$), the inequality above is exactly \eqref{inequality to be proven}. This concludes the proof of the claim.
\end{proof}

We now deduce the general statement for coupling constants $J\le J'$ that may differ on several edges from Claim \ref{claim.weak monotonicity on one edge}: one can define a finite sequence $J(1),\ldots,J(n)$ of coupling constants such that $J(1)=J, J(n)=J'$ and, for any $1\le k\le n-1$, $J(k)\le J(k+1)$ but with equality on every edge but one. Writing $p(k)=1-\exp(-4J(k))$, the claim implies that for all $F\subset E$ \begin{equation*}
\en_{J(k),p(k)}(\A_F)\ge \en_{J(k+1),p(k)}(\A_F).
\end{equation*}
However, since $p(k)\ge p(1)$ for any $k$ (as $J\mapsto p^\dagger(J)$ is an increasing function), Proposition \ref{proposition strict inequalities below large inequalities} implies for all subsets $F\subseteq E$ \begin{equation*}
\en_{J(k),p(1)}(\A_F)\ge \en_{J(k+1),p(1)}(\A_F).
\end{equation*}
Hence, \eqref{eq weak * monotonicity for edag} follows by chaining these inequalities. This concludes the proof of Proposition \ref{proposition inequalities for edag}.
\end{proof}

In fact, at $p^\dagger$ there is also a slightly more general version of the FKG-like inequality, as stated in the following proposition. Though this result is not needed elsewhere, we felt that it deserved to be written down. 
\begin{proposition}
	For any $F,F'\subseteq E$,
	\begin{align}\label{eq bonus}
		\edag_J(\A_F\cap\forall_{F'})\le \edag_J(\A_F)\edag_J(\forall_{F'}).
	\end{align}
\end{proposition}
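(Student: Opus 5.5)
The plan is to expand the increasing event $\forall_{F'}$ by inclusion–exclusion in terms of decreasing events $\A_H$, and then use formula \eqref{eq formula for edag} to rewrite everything as an Ising correlation. After a resummation, the inequality should reduce to monotonicity of abstract Ising correlations in the coupling constants, which follows from Proposition \ref{proposition griffith abstract}.

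\emph{Reduction to disjoint sets.} If $F\cap F'\neq\emptyset$, the event $\A_F\cap\forall_{F'}$ is empty, so the left-hand side of \eqref{eq bonus} vanishes and there is nothing to prove. We may therefore assume $F\cap F'=\emptyset$.

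\emph{Inclusion–exclusion.} Write
\begin{align*}
\1_{\forall_{F'}}=\sum_{H\subset F'}(-1)^{|H|}\1_{\A_H},
\end{align*}
and use $\A_F\cap\A_H=\A_{F\cup H}$. By \eqref{eq formula for edag}, with $F\cap H=\emptyset$ and $q_e=\tanh(J^*_e)$, we get
\begin{align*}
\edag_J(\A_F\cap\forall_{F'})=q_F\Big\langle\sigma(F)\prod_{e\in F'}\big(1-q_e\sigma(e)\big)\Big\rangle_{J^*},
\end{align*}
since $\sum_{H\subset F'}(-1)^{|H|}q_H\sigma(H)=\prod_{e\in F'}(1-q_e\sigma(e))$ and $\sigma(F\cup H)=\sigma(F)\sigma(H)$. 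Likewise $\edag_J(\forall_{F'})=\langle W\rangle_{J^*}$ and $\edag_J(\A_F)=q_F\langle\sigma(F)\rangle_{J^*}$, where
\begin{align*}
W:=\prod_{e\in F'}\big(1-\tanh(J^*_e)\sigma(e)\big).
\end{align*}
Hence \eqref{eq bonus} is equivalent to the covariance bound $\langle\sigma(F)W\rangle_{J^*}\le\langle\sigma(F)\rangle_{J^*}\langle W\rangle_{J^*}$.

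\emph{Identifying $W$ as a reweighting.} Since $\sigma(e)=\pm1$, we have $1-\tanh(J^*_e)\sigma(e)=e^{-J^*_e\sigma(e)}/\cosh(J^*_e)$. Thus $W$ is, up to a positive constant, $\exp\big(-\sum_{e\in F'}J^*_e\sigma(e)\big)$. This factor exactly cancels the interaction on the edges of $F'$ in \eqref{eq ising abstrait}. As $W>0$, it follows that $\langle\sigma(F)W\rangle_{J^*}/\langle W\rangle_{J^*}=\langle\sigma(F)\rangle_{\tilde J}$, where $\tilde J$ is the abstract Ising model on $(E,\F^*_G)$ with couplings $\tilde J_e=0$ for $e\in F'$ and $\tilde J_e=J^*_e$ otherwise. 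The target therefore becomes
\begin{align*}
\langle\sigma(F)\rangle_{\tilde J}\le\langle\sigma(F)\rangle_{J^*}.
\end{align*}

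\emph{Monotonicity in the couplings.} Differentiating \eqref{eq ising abstrait} gives $\partial_{J_e}\langle\sigma(F)\rangle_J=\langle\sigma(F)\sigma(e)\rangle_J-\langle\sigma(F)\rangle_J\langle\sigma(e)\rangle_J$. This is nonnegative by the second GKS inequality of Proposition \ref{proposition griffith abstract}, applied with $G=\{e\}$ and $\sigma(F)\sigma(e)=\sigma(F\Delta\{e\})$. Integrating along the straight path from $\tilde J$ to $J^*$ concludes the argument. The HT expansion and GKS remain valid for nonnegative couplings, including zero, by continuity.

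The step I expect to need the most care is the identification of $\prod_{e\in F'}(1-q_e\sigma(e))$ with the Boltzmann factor that removes the couplings on $F'$. The rest consists of routine sign bookkeeping in the inclusion–exclusion.
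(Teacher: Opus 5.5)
Your proof is correct, but it reaches the key comparison by a different route than the paper.

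The paper computes $\edag_J(\A_F\cap\forall_{F'})$ directly from the definition. This gives the closed formula \eqref{eq formula edag general} as a sum over the subgroup $\F^{F'}=\{\eta\in\F:\eta\cap F'=\emptyset\}$. The inequality then becomes $\langle\sigma(F)\rangle^{\F^{F'}}_{J^*}\le\langle\sigma(F)\rangle^{\F}_{J^*}$, which is exactly Proposition~\ref{proposition monotonicity in the HT structure}, proved by summing the GKS inequality over coset representatives of $\F/\F^{F'}$.

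You take three different steps instead:
\begin{itemize}
\item you derive the same expression by inclusion--exclusion from \eqref{eq formula for edag};
\item you recognise $W$ as the Boltzmann factor that switches off the dual couplings on $F'$;
\item you conclude by monotonicity in the coupling constants, since the derivative is a covariance and is nonnegative by the second GKS inequality of Proposition~\ref{proposition griffith abstract}.
\end{itemize}

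The two comparisons are literally the same quantity. In the high-temperature expansion, $\tanh(\tilde J_e)=0$ on $F'$ kills every $\eta$ meeting $F'$. Since $F\cap F'=\emptyset$, one has $(\eta\Delta F)\cap F'=\eta\cap F'$, so $\langle\sigma(F)\rangle_{\tilde J}=\langle\sigma(F)\rangle^{\F^{F'}}_{J^*}$.

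What each route buys:
\begin{itemize}
\item Your route needs only GKS and the standard derivative formula, and avoids the coset argument.
\item It also gives the clean interpretation $\edag_J(\A_F\mid\forall_{F'})=q_F\langle\sigma(F)\rangle_{\tilde J}$: conditioning on $F'$ being open amounts to deleting the dual couplings on $F'$.
\item The paper's route produces the explicit formula more directly.
\item It also rests on a comparison valid for arbitrary nested HT structures, not only those obtained by forbidding edges.
\end{itemize}

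Your reduction to disjoint $F,F'$ (the left-hand side simply vanishes when $F\cap F'\neq\emptyset$) is correct. It is cleaner than the paper's appeal to the argument used for \eqref{eq fkg weak * for edag}, which does not literally transfer to events of the form $\forall_{F'}$.
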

\begin{proof}
	As explained in the proof of \eqref{eq fkg weak * for edag}, it is enough to treat the case of disjoint $F,F'$. The same computation as for \eqref{eq formula for edag} gives the more general identity \begin{equation}\label{eq formula edag general}
		\edag_J(\A_F\cap\forall_{F'}) = q_F\frac{\sum_{\eta\in\F^{F'}}q_{\eta\Delta F}}{\sum_{\eta\in\F}q_\eta}\prod_{e\in F'}(1-q_e^2),
	\end{equation}
where $\F^{F'}=\{\eta\in\F: \eta\cap F'=\emptyset\}$ is a subgroup of $\F$. 
\par By Proposition \ref{proposition monotonicity in the HT structure}, we have \begin{equation}
		\langle\sigma(F)\rangle_{J^*}^{\F^{F'}} \le \langle\sigma(F)\rangle_{J^*}^{\F}
	\end{equation}
where on the left-hand side the HT structure is $\F^{F'}$ and on the right it is $\F$; using the high-temperature expansion \eqref{eq HTE for abstract Ising}, this can be rewritten \begin{equation}
		\frac{\sum_{\eta\in\F^{F'}}q_{\eta\Delta F}}{\sum_{\eta\in\F}q_\eta} \le \frac{\sum_{\eta\in\F}q_{\eta\Delta F}}{\sum_{\eta\in\F}q_\eta}\times \frac{\sum_{\eta\in\F^{F'}}q_{\eta}}{\sum_{\eta\in\F}q_\eta}.
	\end{equation}
	Upon multiplying by $q_F\prod_{e\in F'}(1-q_e^2)$, we get (by the general equation \eqref{eq formula edag general}) on the left $\edag_J(\A_F\cap\forall_{F'})$ and on the right $\edag_J(\A_F\cap\forall_\emptyset)\edag_J(\A_\emptyset\cap\forall_{F'})=\edag_J(\A_F)\edag_J(\forall_{F'})$, which concludes the proof of \eqref{eq bonus}.
\end{proof}

\subsubsection{End of the proof of Theorem \ref{theorem weak *}: weak$^\dagger$ domination and weak$^\dagger$ FKG below the threshold}
\label{section weak * below edag}

Now we come to the end of the proof of Theorem \ref{theorem weak *}, namely weak$^\dagger$ domination below the threshold $p^\dagger=p^\dagger(J)$. Of course, Proposition \ref{proposition inequalities for edag} brings us close to it, and what remains to be done is to show that these inequalities allow us to prove weak$^\dagger$ domination and weak$^\dagger$ FKG strictly below the threshold $p^\dagger$. 

\begin{proposition}
With the notation of Theorem \ref{theorem weak *}, if $p\in[0,1]^E$ is a parameter such that $p<p^\dagger(J)$, then for every $J'\ge J$, \begin{equation}\label{eq weak * monotonicity below edag}
\en_{J,p}\domi^c\en_{J',p}.
\end{equation}
Furthermore, $\en_{J,p}$ has dual weak FKG.
\end{proposition}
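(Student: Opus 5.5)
The plan is to write $\en_{J,p}$ as a further thinning of $\edag_J$ and then use Proposition~\ref{proposition strict inequalities below large inequalities} and Proposition~\ref{proposition weak * FKG stable} to turn the closed inequalities of Proposition~\ref{proposition inequalities for edag} into strict ones.

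Set $r:=p/p^\dagger(J)$. Since $p<p^\dagger(J)$ pointwise, we have $r<1$ pointwise. Then
\begin{align*}
\en_{J,p}=\edag_J\cap\ber{r}, \qquad \en_{J',p}=\en_{J',p^\dagger(J)}\cap\ber{r}.
\end{align*}

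\textbf{Domination.} Apply Proposition~\ref{proposition strict inequalities below large inequalities} with $\mu=\en_{J,p^\dagger(J)}$ and $\nu=\en_{J',p^\dagger(J)}$. The large inequalities $\mu(\A_F)\ge\nu(\A_F)$ are exactly \eqref{eq weak * monotonicity for edag}. The proposition then gives $(\nu_r)^c\domi_{s}(\mu_r)^c$ for some $s>0$, i.e.\ $\en_{J,p}\domi^c\en_{J',p}$, provided the singleton inequalities are strict.

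Strictness can fail: if $e$ is a bridge, then $\{e\}\in\F^*_G$ (flip the spins on one side), while if $e$ lies on a cycle the situation differs. More generally, $\en_J(\A_{\{e\}})$ may be constant in $J$, for instance when $\xi_e$ is independent of the rest. I would handle this as follows. The one-edge comparison, through Corollary~\ref{coro moebius map and derivative}, is a M\"obius map whose derivative has constant sign, so it is either strictly monotone or constant. Hence each singleton inequality is either strict or an equality for all $J\le J'$.

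Call $e$ \emph{degenerate} when equality holds. In the formula of Proposition~\ref{proposition formula for en_J cap ber(1-p) (AF)}, degeneracy means $\langle\sigma(\{e\})\rangle_{J^*}$ does not depend on the couplings being varied. Via \eqref{inequality to be proven} with $F=\{e\}$, this forces the GKS inequality to be an equality, which should correspond to $\{e\}\in\F$ or $\{e\}$ being independent of $\F$. In those cases the coordinate $\omega_e$ of $\en_{J,p}$ is independent of the other coordinates, and its law depends only on $J_e$, namely closed with probability $1-p_e$ or $q_e^2+\dots$ as appropriate.

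So I would factor the measures as a product over degenerate edges and the rest. On degenerate edges, the one-dimensional marginals are ordered by the one-edge monotonicity, and product Bernoulli measures dominate each other coordinatewise. On the remaining edges, strictness holds and Proposition~\ref{proposition strict inequalities below large inequalities} applies. A product of dominations is a domination, and the union with $\ber{s}$ respects products.

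I expect this reduction to the nondegenerate case to be the main obstacle. The structure of equality in GKS for the abstract Ising model must be identified precisely, probably by splitting $E$ along blocks where $\F$ decomposes as a direct sum.

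\textbf{FKG.} The dual weak FKG statement is analogous. By \eqref{eq fkg weak * for edag} the hypothesis of Proposition~\ref{proposition weak * FKG stable} holds for $\edag_J$, so $(\edag_J)_r=\en_{J,p}$ satisfies the large inequalities. Strictness for pairs of singletons $\{e\},\{e'\}$ amounts to $\langle\sigma(\{e,e'\})\rangle>\langle\sigma(\{e\})\rangle\langle\sigma(\{e'\})\rangle$. When it fails, $e$ and $e'$ are in different independent blocks of $\F$, so $\edag_J$ factorizes across these blocks. Within a block I would hope to get strictness from the high-temperature expansion, since all weights are positive and $\F$ connects the two edges. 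FKG for a product of measures each having FKG follows from Harris. Therefore $\en_{J,p}^c$ has weak FKG block by block, hence globally.
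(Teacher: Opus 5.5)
Your skeleton matches the paper's: write $\en_{J,p}=\edag_J\cap\ber{p/p^\dagger(J)}$ and invoke Propositions \ref{proposition strict inequalities below large inequalities} and \ref{proposition weak * FKG stable}. Everything then reduces to strictness for singletons, namely $\langle\sigma_x\sigma_y\rangle_J<\langle\sigma_x\sigma_y\rangle_{J'}$ and $\cov_J(\xi_e,\xi_{e'})>0$. That strictness is the whole content of the proposition, and your proposal does not establish it.

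\textbf{Degenerate edges.} Your treatment of them is wrong. Degeneracy is not a property of $e$ alone; it depends on $e$ relative to the edges where $J'\neq J$. A degenerate edge also need not be an independent coordinate. Take two triangles sharing a vertex and let $J'>J$ only on the second triangle. The energy fields of the two triangles are independent, so all three edges of the first triangle are degenerate. Yet these three coordinates are not independent: already under $\en_J$ the number of closed edges of a triangle is even. So your decomposition into ``product Bernoulli measures on degenerate edges'' times the rest does not exist.

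The correct reduction, which the paper performs just before its proof and which your closing hunch points to, is to factorize over the classes of the cycle matroid (edges lying on a common cycle). This factorization is preserved by thinning, by taking complements, and by changing couplings. On a class where $J'=J$ the two laws coincide. On a class where $J'\neq J$ somewhere, one must show that no edge is degenerate.

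\textbf{Strictness inside a class.} Here you only hope that strictness follows from positivity of the high-temperature weights. It does not follow automatically, since GKS covariances are differences of positive sums. The paper supplies an explicit witness. The switching lemma writes $\cov_J(\xi_e,\xi_{e'})$, up to a positive factor, as a sum of nonnegative terms over pairs of currents. Take $\mathbf n_2=0$ and $\mathbf n_1$ equal to $1$ on $C\setminus\{e,e'\}$, for a cycle $C$ through $e$ and $e'$. This pair gives a strictly positive term, hence $\cov_J(\xi_e,\xi_{e'})>0$.

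Strict monotonicity of the two-point function then follows from three facts:
\begin{itemize}
\item $\partial_{J_{e'}}\langle\sigma_x\sigma_y\rangle_J=4\cov_J(\xi_e,\xi_{e'})>0$ for $e'\neq e$ in the same class;
\item $\partial_{J_e}\langle\sigma_x\sigma_y\rangle_J=1-\langle\sigma_x\sigma_y\rangle_J^2>0$;
\item GKS monotonicity in the remaining couplings.
\end{itemize}

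Your alternative route, through the equality case of \eqref{inequality to be proven} and the M\"obius structure, would need a characterization of equality in GKS for abstract HT structures. You conjecture that characterization (and misstate it) but do not prove it. The same missing witness argument is needed for your FKG part.
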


Before proving this proposition, which concludes the proof of Theorem \ref{theorem weak *}, let us remark that the case of a disconnected graph reduces to the case of a connected one. The reason for this is that when $G$ is a disconnected graph, all the measures that we are dealing with factorize as independent measures restricted to each connected component of the graph, and if the properties we want to show hold for each of these components, then they hold as well for $G$ itself.
\par The same is true for ``loop edges'', i.e. those edges whose endpoints are the same vertex: the percolation on them is independent of the rest as well. Therefore, without loss of generality, during the proof we assume that $G$ is connected without loop edges.

We make one more reduction, which removes the possible tree part of the graph. Declare two edges $e,e'\in E$ to be equivalent if either $e=e'$ or there exists a cycle of $G$ containing both of them; this is the connected-component relation of the cycle matroid of $G$. Bridges form singleton classes. In the variables $\tau_{xy}=\sigma_x\sigma_y$ (or, equivalently, $\xi_{xy}=(1+\tau_{xy})/2$), the only constraints are the cycle constraints $\prod_{e\in C}\tau_e=1$. Since each cycle is contained in one of the above classes and the Ising energy field weight factorizes over edges, the law of the energy field factorizes as a product over these classes. This factorization is preserved after intersection with Bernoulli percolation, after taking complements, and when the couplings are changed. Therefore it is enough to prove the proposition on each class separately: stochastic domination tensorizes, and the product of FKG measures is FKG. The one-edge bridge classes are immediate Bernoulli cases. We may thus assume in the proof below that every two distinct edges of $G$ lie on a common cycle.

\begin{proof}
Thanks to Proposition \ref{proposition strict inequalities below large inequalities} (respectively, Proposition \ref{proposition weak * FKG stable}), to prove dual weak domination \eqref{eq weak * monotonicity below edag} (respectively, dual weak FKG for $\en_{J,p}$), it is enough to show that Inequality \eqref{eq weak * monotonicity for edag} is strict for $F=\{e\}$ whenever the comparison is nontrivial (respectively, that Inequality \eqref{eq fkg weak * for edag} is strict for two distinct singletons $F=\{e\}$ and $F'=\{e'\}$).

For a singleton $F=\{e\}$, we have $\A_F=(\forall_F)^c$ and hence $\en_{J,p}(\A_F)=1-p_e\en_J(\forall_F)$. Thus, writing $e=(xy)$ and $e'=(x'y')$, the two strict statements that remain are
\begin{align}
\label{eq strict two-point monotonicity}
\langle\sigma_x\sigma_y\rangle_J&<\langle\sigma_x\sigma_y\rangle_{J'},\\
\label{e.strict Griffith}
\langle\sigma_x\sigma_y\sigma_{x'}\sigma_{y'}\rangle_J
-\langle\sigma_x\sigma_y\rangle_J\langle\sigma_{x'}\sigma_{y'}\rangle_J
&=4\cov_J\big(\xi(\sigma)_e,\xi(\sigma)_{e'}\big)>0.
\end{align}
We now justify the strict inequality in \eqref{e.strict Griffith} for distinct edges $e$ and $e'$: it amounts to checking that in the classical proof of large inequality, under the assumptions on $G$ explained before the proof, one may get a strict inequality. Set $A=\{x,y\}$ and $B=\{x',y'\}$. For a current $\mathbf n\in\mathbb N^E$, write
\[
w_J(\mathbf n)=\prod_{f\in E}\frac{J_f^{\mathbf n_f}}{\mathbf n_f!},
\qquad
Z_A^{\rm cur}=\sum_{\partial\mathbf n=A}w_J(\mathbf n),
\]
where $\partial\mathbf n$ is the set of vertices incident to an odd total current. The random-current representation gives $\langle\sigma_A\rangle_J=Z_A^{\rm cur}/Z_\emptyset^{\rm cur}$, and the switching lemma yields
\begin{align*}
&(Z_\emptyset^{\rm cur})^2
\big(\langle\sigma_A\sigma_B\rangle_J-
\langle\sigma_A\rangle_J\langle\sigma_B\rangle_J\big)\\
&\qquad=
\sum_{\substack{\partial\mathbf n_1=A\mathbin{\Delta}B\\
\partial\mathbf n_2=\emptyset}}
w_J(\mathbf n_1)w_J(\mathbf n_2)
\mathbf 1\!\left\{x'\not\longleftrightarrow y'
\text{ in }\operatorname{supp}(\mathbf n_1+\mathbf n_2)\right\};
\end{align*}
see, e.g., \cite{FV}. Every term on the right-hand side is nonnegative. By the reduction above, $e$ and $e'$ lie on a common cycle $C$. Take $\mathbf n_2=0$ and let $\mathbf n_1$ be equal to $1$ on $C\setminus\{e,e'\}$ and to $0$ elsewhere. Since $\partial C=\emptyset$, its source set is
$\partial\mathbf n_1=\partial e\mathbin{\Delta}\partial e'=A\mathbin{\Delta}B$. Moreover, after deleting $e$ and $e'$ from $C$, the two endpoints of $e'$ are disconnected in $\operatorname{supp}(\mathbf n_1+\mathbf n_2)$: one of the two paths between them used $e'$ and the other used $e$. Hence the indicator above equals $1$ for this pair of currents. Its weight is strictly positive because all coupling constants are positive, so \eqref{e.strict Griffith} is strict.

Finally, differentiation with respect to a coupling gives
\[
\partial_{J_{e'}}\langle\sigma_x\sigma_y\rangle_J
=
\langle\sigma_x\sigma_y\sigma_{x'}\sigma_{y'}\rangle_J
-\langle\sigma_x\sigma_y\rangle_J\langle\sigma_{x'}\sigma_{y'}\rangle_J.
\]
For a nontrivial cycle-matroid class on which $J'\neq J$, choose an edge $e'$ with $J'_{e'}>J_{e'}$. If $e\neq e'$, the strict positivity proved above shows that increasing $J_{e'}$ strictly increases the two-point function associated with $e$. If $e=e'$, the derivative is
$1-\langle\sigma_x\sigma_y\rangle_J^2>0$, since the finite-volume Ising measure has full support. Increasing the remaining coupling constants can only increase the two-point function further by the usual GKS monotonicity. This proves \eqref{eq strict two-point monotonicity}.
\end{proof}

\bibliographystyle{alpha}
\bibliography{references}

@article{martineau2025stochastic,
  title={Stochastic domination and lifts of random variables in percolation theory},
  author={Martineau, S{\'e}bastien and Poudevigne, R{\'e}my and Rax, Paul},
  journal={arXiv preprint arXiv:2504.02427},
  year={2025}
}

@article{ray2022finitary,
  title={Finitary codings for gradient models and a new graphical representation for the six-vertex model},
  author={Ray, Gourab and Spinka, Yinon},
  journal={Random Structures \& Algorithms},
  volume={61},
  number={1},
  pages={193--232},
  year={2022},
  publisher={Wiley Online Library}
}

@article{lelli2024mixing,
  title={Mixing time of random walk on dynamical random cluster},
  author={Lelli, Andrea and Stauffer, Alexandre},
  journal={Probability Theory and Related Fields},
  volume={189},
  number={3},
  pages={981--1043},
  year={2024},
  publisher={Springer}
}

@article{broman2006refinements,
  title={Refinements of stochastic domination},
  author={Broman, Erik I and H{\"a}ggstr{\"o}m, Olle and Steif, Jeffrey E},
  journal={Probability theory and related fields},
  volume={136},
  number={4},
  pages={587--603},
  year={2006},
  publisher={Springer}
}

@article{broman2006dynamical,
  title={Dynamical stability of percolation for some interacting particle systems and $\varepsilon$-movability},
  author={Broman, Erik I and Steif, Jeffrey E},
  journal={The Annals of Probability},
  volume={34},
  number={2},
  pages={539--576},
  year={2006}
}

@article{RS,
  title={Characterizations of amenability through stochastic domination and finitary codings},
  author={Ray, Gourab and Spinka, Yinon},
  journal={arXiv preprint arXiv:2304.13784},
  year={2023}
}

@article{liggett2006stochastic,
  title={Stochastic domination: the contact process, {I}sing models and {FKG} measures},
  author={Liggett, Thomas M. and Steif, Jeffrey E.},
  journal={Annales de l'Institut Henri Poincar{\'e}, Probabilit{\'e}s et Statistiques},
  volume={42},
  number={2},
  pages={223--243},
  year={2006},
  doi={10.1016/j.anihpb.2005.04.002}
}

@article{izyurov2024energy,
  title={Energy correlations in the critical {I}sing model on a torus},
  author={Izyurov, Konstantin and Kemppainen, Antti and Tuisku, Petri},
  journal={The Annals of Applied Probability},
  volume={34},
  number={2},
  pages={1699--1729},
  year={2024},
  publisher={Institute of mathematical statistics}
}

@article{garban2025energy,
  title={Energy field of critical {I}sing model and examples of singular fields in {QFT}},
  author={Garban, Christophe and Kupiainen, Antti},
  journal={arXiv preprint arXiv:2502.02554},
  year={2025}
}

@article{HS,
  title={The energy density in the planar {I}sing model},
  author={Hongler, Cl{\'e}ment and Smirnov, Stanislav},
  journal={Acta mathematica},
  volume={211},
  number={2},
  pages={191--225},
  year={2013},
  publisher={Institut Mittag-Leffler}
}

@article{hansen2025generalcouplingisingmodels,
      title={A {G}eneral {C}oupling for {I}sing {M}odels and {B}eyond}, 
      author={Ulrik Thinggaard Hansen and Jianping Jiang and Frederik Ravn Klausen},
      year={2025},
      eprint={2506.10765},
      archivePrefix={arXiv},
      primaryClass={math.PR},
      url={https://arxiv.org/abs/2506.10765}, 
}

@article{grimmett1995comparison,
  title={Comparison and disjoint-occurrence inequalities for random-cluster models},
  author={Grimmett, Geoffrey},
  journal={Journal of Statistical Physics},
  volume={78},
  number={5},
  pages={1311--1324},
  year={1995},
  publisher={Springer}
}

@article{deijfen2024geometric,
  title={Geometric random intersection graphs with general connection probabilities},
  author={Deijfen, Maria and Michielan, Riccardo},
  journal={Journal of Applied Probability},
  volume={61},
  number={4},
  pages={1343--1360},
  year={2024},
  publisher={Cambridge University Press}
}

@article{dario-garban,
  title={Phase transitions for the {XY} model in non-uniformly elliptic and {P}oisson-{V}oronoi environments},
  author={Dario, Paul and Garban, Christophe},
  journal={Communications in Mathematical Physics},
  volume={406},
  number={5},
  pages={101},
  year={2025},
  publisher={Springer}
}

@article{BGJ,
  title={The random-cluster model on the complete graph},
  author={Bollob{\'a}s, B{\'e}la and Grimmett, Geoffrey and Janson, Svante},
  journal={Probability Theory and Related Fields},
  volume={104},
  number={3},
  pages={283--317},
  year={1996},
  publisher={Springer}
}

@article{FK-perco,
  title={On the random-cluster model: {I}. {I}ntroduction and relation to other models},
  author={Fortuin, Cornelius Marius and Kasteleyn, Piet W},
  journal={Physica},
  volume={57},
  number={4},
  pages={536--564},
  year={1972},
  publisher={Elsevier}
}

@article{FKG,
  title={Correlation inequalities on some partially ordered sets},
  author={Fortuin, Cees M and Kasteleyn, Pieter W and Ginibre, Jean},
  journal={Communications in Mathematical Physics},
  volume={22},
  number={2},
  pages={89--103},
  year={1971},
  publisher={Springer}
}

@article{Griffiths,
  title={Correlations in {I}sing ferromagnets. {I}},
  author={Griffiths, Robert B},
  journal={Journal of Mathematical Physics},
  volume={8},
  number={3},
  pages={478--483},
  year={1967},
  publisher={American Institute of Physics}
}

@article{KS,
  title={General {G}riffiths' inequalities on correlations in {I}sing ferromagnets},
  author={Kelly, Douglas G and Sherman, Seymour},
  journal={Journal of Mathematical Physics},
  volume={9},
  number={3},
  pages={466--484},
  year={1968},
  publisher={American Institute of Physics}
}

@article{BFO,
	title={A new bound for the critical point of the {FK} model for $ q< 1$},
	author={Beffara, Vincent and Faipeur, Corentin and Oke, Tejas},
	journal={arXiv preprint arXiv:2512.16486},
	year={2025}
}

@book{FV,
  title={Statistical mechanics of lattice systems: a concrete mathematical introduction},
  author={Friedli, Sacha and Velenik, Yvan},
  year={2017},
  publisher={Cambridge University Press}
}

@book{grimmett2006random,
  title={The random-cluster model},
  author={Grimmett, Geoffrey R},
  year={2006},
  publisher={Springer}
}

@article{holley1974remarks,
  title={Remarks on the {FKG} inequalities},
  author={Holley, Richard},
  journal={Communications in Mathematical Physics},
  volume={36},
  number={3},
  pages={227--231},
  year={1974},
  publisher={Springer}
}

@article{edwards1988generalization,
  title={Generalization of the {F}ortuin-{K}asteleyn-{S}wendsen-{W}ang representation and {M}onte {C}arlo algorithm},
  author={Edwards, Robert G and Sokal, Alan D},
  journal={Physical review D},
  volume={38},
  number={6},
  pages={2009},
  year={1988},
  publisher={APS}
}

@article{ahlswede1978inequality,
  title={An inequality for the weights of two families of sets, their unions and intersections},
  author={Ahlswede, Rudolf and Daykin, David E},
  journal={Zeitschrift f{\"u}r Wahrscheinlichkeitstheorie und verwandte Gebiete},
  volume={43},
  number={3},
  year={1978}
}

@article{GHS,
  title={Concavity of magnetization of an {I}sing ferromagnet in a positive external field},
  author={Griffiths, Robert B and Hurst, Charles A and Sherman, Seymour},
  journal={Journal of Mathematical Physics},
  volume={11},
  number={3},
  pages={790--795},
  year={1970},
  publisher={American Institute of Physics}
}

@article{AHL,
  title={The {I}sing magnetisation field and the {G}aussian free field},
  author={Alcalde L{\'o}pez, Tom{\'a}s  and Heeney, Lorca and Lis, Marcin},
  journal={arXiv preprint arXiv:2602.05886},
  year={2026}
}

@article{klausen2022monotonicity,
 author = {Ravn Klausen, Frederik},
 title = {On monotonicity and couplings of random currents and the loop-{{\(\mathrm{O}(1)\)}}-model},
 fjournal = {ALEA. Latin American Journal of Probability and Mathematical Statistics},
 journal = {ALEA, Lat. Am. J. Probab. Math. Stat.},
 issn = {1980-0436},
 volume = {19},
 number = {1},
 pages = {151--161},
 year = {2022},
 language = {English},
 url = {alea.impa.br/articles/v19/19-07.pdf},
 zbMATH = {7470634},
 Zbl = {1482.82015}
}

@inproceedings{haggstrom1996note,
  title={A note on (non-) monotonicity in temperature for the {I}sing model},
  author={H{\"a}ggstr{\"o}m, O},
  booktitle={Markov Proc. Rel. Fields},
  volume={2},
  pages={529--537},
  year={1996}
}

@article{cammarota1993stochastic,
  title={Stochastic order and monotonicity in temperature for {G}ibbs measures},
  author={Cammarota, Camillo},
  journal={Letters in mathematical physics},
  volume={29},
  number={4},
  pages={287--295},
  year={1993},
  publisher={Springer}
}

@article{severo2024slab,
  title={Slab percolation for the {I}sing model revisited},
  author={Severo, Franco},
  journal={Electronic Communications in Probability},
  volume={29},
  pages={1--11},
  year={2024},
  publisher={The Institute of Mathematical Statistics and the Bernoulli Society}
}

@article{LSS,
	title={Domination by product measures},
	author={Liggett, Thomas M and Schonmann, Roberto H and Stacey, Alan M},
	journal={The Annals of Probability},
	volume={25},
	number={1},
	pages={71--95},
	year={1997},
	publisher={Institute of Mathematical Statistics}
}

\end{document}